\documentclass[a4paper, 11pt, reqno]{amsart}
\usepackage{amsmath}
\usepackage{amsfonts}
\usepackage{amsthm}
\usepackage[utf8]{inputenc}
\usepackage{latexsym, amssymb}
\usepackage{epsfig}
\usepackage[pdftex]{hyperref}
\usepackage{indentfirst}
\usepackage{verbatim}
\usepackage[titletoc]{appendix}
\usepackage{extarrows}
\usepackage{url}
\usepackage{tikz}
\usetikzlibrary{matrix}

\newtheorem{thm}{Theorem}
\newtheorem*{thmI}{Min-max Theorem}

\newtheorem{lem}{Lemma}
\newtheorem{rem}{Remark}
\newtheorem{prop}{Proposition}

\newtheorem*{defin}{Definition}

\newcommand{\ga}{\gamma}

\newcommand{\pa}{\partial}
\newcommand{\re}{\mathbb{R}}
\newcommand{\N}{\mathbb{N}}
\newcommand{\si}{\Sigma}
\DeclareMathOperator{\length}{length}
\DeclareMathOperator{\area}{area}
\DeclareMathOperator{\vol}{vol}
\DeclareMathOperator{\sing}{sing}
\DeclareMathOperator{\ind}{ind}
\DeclareMathOperator{\nul}{nul}
\DeclareMathOperator{\spt}{spt}
\DeclareMathOperator{\inj}{inj}
\DeclareMathOperator{\im}{im}
\DeclareMathOperator{\dmn}{dmn}
\DeclareMathOperator{\Tan}{Tan}

\newenvironment{dem1}[1][\noindent \textit{Proof of Theorem \ref{thm:width.charac}. }]{#1}{\hfill$\square$ \vspace{0.3cm}}

\newenvironment{dem3}[1][\noindent \textit{Proof of Theorem \ref{thm:non_con}. }]{#1}{\hfill$\square$ \vspace{0.3cm}}

\newenvironment{dem4}[1][\noindent \textit{Proof of Theorem \ref{thm:Bolza}. }]{#1}{\hfill$\square$ \vspace{0.3cm}}

\newcommand{\mres}{\mathbin{\vrule height 1.6ex depth 0pt width
0.13ex\vrule height 0.13ex depth 0pt width 1.3ex}}

\bibliographystyle{plain}

\title[On the first width of hyperbolic surfaces]{On the first width of hyperbolic surfaces: multiplicity and lower bounds}
\author{Vanderson Lima}
\address{Universidade Federal do Rio Grande do Sul\\
  Instituto de Matem\'atica e Estat\'istica\\
 Porto Alegre, RS - 91509-900, Brazil}
\email{vanderson.lima@ufrgs.br}

\begin{document}

\begin{abstract}
On a closed Riemannian surface of negative curvature, we prove a characterization for configurations of closed geodesics arising from one parameter Allen-Cahn min-max constructions. One of the facts we conclude is that every geodesic occurs with multiplicity one. As an application we obtain a uniform sharp lower bound for the first min-max width of closed hyperbolic surfaces and prove it is only attained asymptotically. Moreover, we compute the first width of the Bolza surface and of some hyperbolic surfaces with small systoles. 
\end{abstract}

\maketitle

\section{Introduction}

The volume spectrum of a closed Riemannian manifold $(M^{n+1},g)$, defined by Gromov \cite{Gro}, Guth \cite{Gut} and Marques-Neves \cite{MaNe2}, is a sequence 
$$\big\{\omega_{k}(M^{n+1},g)\big\}_{k=1}^{\infty} \subset (0,+\infty),$$ 
which can be thought as a nonlinear analogue of the Laplace sprectrum of $(M^{n+1},g)$. For instance, $\{\omega_{k}\}$ satisfy a Weyl Law, as conjectured by Gromov and proved by Liokumovich-Marques-Neves \cite{LMN}. The number $\omega_{k}(M^{n+1},g)$ is called the \emph{$k$-width} of $(M^{n+1},g)$ and can be characterized by the following.

\begin{thmI}\label{thm:Min-max}
Let $(M^{n+1},g)$ be a closed Riemannian manifold.
\begin{itemize}
\item[(a)] (Marques-Neves \cite{MaNe1}) If $3 \leq n+1 \leq 7$, then
$$\omega_{k}(M^{n+1},g) = \sum_{i = 1}^{\ell_k}m_{k}^{i}\vol_{g}(\si_{k}^{i}),$$
where $m_{k}^{i} \in \mathbb{N}$ and $\{\si_{k}^{i}\}_{1\leq i\leq \ell_k}$ is a collection of disjoint, connected, embedded closed minimal hypersurfaces of $(M^{n+1},g)$.\\

\item[(b)] (Chodosh-Mantoulidis \cite{CM2})
If $n+1 = 2$, then
$$\omega_{k}(M^{2},g) = \sum_{i = 1}^{\ell_k}m_{k}^{i}\length_{g}(\sigma_{k}^{i}),$$
where $m_{k}^{i} \in \mathbb{N}$ and $\{\sigma_{k}^{i}\}_{1\leq i\leq \ell_k}$ is a collection of primitive, immersed closed geodesics of $(M^{2},g)$ which are distinct (not necessarily disjoint).
\end{itemize}
\end{thmI}

Item (a) of the previous result is part of the so called \emph{Amgren-Pitts min-max theory}, first introduced by the aforemented authors \cite{Al2,P}, with further regularity results by Schoen-Simon \cite{ScSim}. Marques and Neves \cite{MaNe1,MaNe2,MaNe3} significantly developed the theory, as part of their program to develop a Morse Theory for the volume functional, which led to very important applications in geometry in the last years, see \cite{MaNe,AMN,IMN,MNS,Song}. 

In dimension $2$, Almgren-Pitts min-max only garantees the existence of a \emph{geodesic network} \cite{P2}, so in order to obtain item (b), Chodosh-Mantoulidis used the \emph{phase transitions} variant of min-max. This is motivated by the connection between solutions of the Allen-Cahn equation and minimal hypersurfaces, explored for instance in \cite{Mod,St,PT,HT,TW}. The connection of the Allen-Cahn functional with the Almgren-Pitts min-max theory was first studied by Guaraco \cite{G} and Gaspar-Guaraco \cite{GG}, and then by Mantoulidis (for $n+1=2$). In particular, Gaspar-Guaraco \cite{GG} introduced a sequence of $k$-widths in the phase-transitions setting and compared it to the volume spectrum. The equality of the two sequences was then proved by Dey \cite{Dey}.

The numbers $m^{i}_{k}$ appearing in the Min-max Theorem are called the \emph{multiplicities} associated with the components $\si_{k}^{i}$ (or $\sigma_{k}^{i}$). The \emph{Multiplicity One Conjecture} of Marques-Neves states that for $3 \leq n+1 \leq 7$ and a generic metric $g$, there is a collection $\{\si_{k}^{i}\}_{1\leq i\leq \ell_k}$, such that $m^{i}_{k} = 1$, $\forall\,k$ and $\forall\, i \in [1,\ell_k]$. This was proved first by Marques-Neves \cite{MaNe1} for $k=1$ and later by Chodosh-Mantoulidis \cite{CM1} for all $k$ and $n=3$ (in the context of the Allen-Cahn min-max theory). The full conjecture was then settled by Zhou \cite{Zho} (see also \cite{CL}).

However, in dimension $2$ the multiplicity one property does not hold. This was first observed by Aiex \cite{Aie} in some ellipsoids. Very recently Marx-Kuo, Sarnataro and Stryker \cite{MKSS} proved the stronger result that there are open sets $U_k$ of metrics of positive curvature on $\mathbb{S}^2$, such that multiplicity one property does not hold for $\omega_{\ell}(\mathbb{S}^2,g)$, $\ell=1,\ldots,k$, where $g \in U_k$.

In this paper we study the first min-max width $\omega_1$ of surfaces of negative curvature. In particular, we are concerned with the question of multiplicity. Our first result is the following.

\begin{thm}\label{thm:width.charac}
Let $(M,g)$ be a closed oriented Riemannian surface of genus $m \geq 2$, such that $K_{g} < 0$. Then, there exist a primitive figure eight geodesic $\sigma_{\infty}$ and a possibly empty collection of primitive simple closed geodesics $\sigma_{1},\dots,\sigma_{N}$, $N \leq 3m-3$, which are disjoint from $\sigma_{\infty}$ and pairwise disjoint, such that
\begin{equation}\label{eq:width.mult}
\omega_1(M,g) = \length(\sigma_{\infty}) + \sum_{j=1}^N\length(\sigma_j).
\end{equation}
Moreover,
\begin{equation}\label{eq:width.ineq}
\omega_1(M,g) > 2\big(-\inf_{M}K_g\big)^{-1/2}\mathrm{arccosh}\,3.
\end{equation}
\end{thm}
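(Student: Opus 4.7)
The plan splits into a structural analysis of the limiting geodesic network (yielding \eqref{eq:width.mult}) and a universal-cover comparison (yielding \eqref{eq:width.ineq}).

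\textbf{Structural characterization.} First apply the Min-max Theorem, part (b), to write $\omega_1(M,g) = \sum_{i=1}^\ell m_i \length(\tau_i)$ for distinct primitive immersed closed geodesics $\tau_i$. The underlying Allen-Cahn min-max should also furnish an index-nullity bound for the stationary varifold $V = \sum m_i[\tau_i]$, namely $\ind(V) \le 1 \le \ind(V) + \nul(V)$. I then exploit $K_g<0$ in three ways. The Jacobi operator $-\partial_s^2 - K_g$ on periodic normal sections is strictly positive, so simple closed geodesics are stable and isolated, contributing $0$ to both $\ind$ and $\nul$. A local resolution argument, using strict negativity of curvature, shows that each transverse self-intersection or mutual intersection of components in $V$ contributes at least $1$ to $\ind(V)$ (one of the two nontrivial smoothings of the crossing disk strictly decreases total length). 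Finally, a stable simple component with multiplicity $\ge 2$ still contributes nothing to $\nul + \ind$ and so cannot by itself satisfy the lower bound. Combined, these force all multiplicities to equal $1$, all non-crossing components to be simple, and exactly one transverse crossing to appear in $V$. The delicate point is then ruling out two distinct simple closed geodesics meeting once (so that the unique crossing must be a self-intersection, i.e.\ $V$ contains a figure-eight $\sigma_\infty$); I would argue this via a concatenation/resolution comparison, or by using connectedness and saddle-type properties of the 1-parameter Allen-Cahn critical level, to force the single-component description. The remaining simple components must then be pairwise disjoint from each other and from $\sigma_\infty$ (else more crossings would appear, violating $\ind(V) \le 1$). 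The bound $N \leq 3m-3$ follows from the classical pants-decomposition inequality.

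\textbf{Lower bound.} The core estimate is that on any hyperbolic surface any figure-eight closed geodesic has length strictly greater than $2\,\mathrm{arccosh}\,3$. I would prove this in the universal cover $\mathbb{H}^2$: the two petals of $\sigma_\infty$ lift to geodesic arcs whose endpoints are identified by two hyperbolic isometries $\gamma_1,\gamma_2 \in \pi_1(M)$ that generate a free subgroup of rank $2$ (since the self-intersection is transverse and single), and a two-generator displacement estimate in $\mathrm{PSL}_2(\mathbb{R})$ (of Yamada or Buser type, or derived from J{\o}rgensen's inequality) yields the bound strictly on the sum of their translation lengths. For a general metric with $K_g < 0$, set $\kappa = (-\inf K_g)^{1/2}$, rescale to $\tilde g = \kappa^2 g$ with $K_{\tilde g} \le -1$, and use Rauch comparison in the universal cover to transfer the hyperbolic estimate: $\length_{\tilde g}(\sigma_\infty) \geq 2\,\mathrm{arccosh}\,3$, which rescales back to $\length_g(\sigma_\infty) \geq 2\kappa^{-1}\mathrm{arccosh}\,3$. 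Combined with $\omega_1 \ge \length(\sigma_\infty)$ from \eqref{eq:width.mult}, this gives \eqref{eq:width.ineq}. Strictness follows from the strict Rauch comparison (equality would require $K_g \equiv \inf K_g$ along $\sigma_\infty$, impossible for nonconstant $K_g$) together with the strictness of the hyperbolic bound itself.

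\textbf{Main obstacle.} I expect the hardest step to be the classification of configurations compatible with $\ind(V) = 1$ in negative curvature---specifically, ruling out two distinct simple closed geodesics meeting once, and forcing the unique crossing to be a self-intersection of a figure-eight rather than of some more complicated primitive immersed geodesic. The multiplicity-one conclusion is also delicate in the $2$-dimensional setting, where multiplicity one is known to fail generically, so its proof here must genuinely use $K_g < 0$ and probably some fine Allen-Cahn layer analysis near stable geodesics.
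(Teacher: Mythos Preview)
Your overall outline matches the paper's, and your lower-bound argument via universal-cover displacement estimates is essentially Buser's figure-eight inequality (which the paper simply cites). But the multiplicity-one step has a genuine circularity. You want to transfer $\ind_{E_\varepsilon}(u_\varepsilon) \le 1 \le \ind_{E_\varepsilon}(u_\varepsilon) + \nul_{E_\varepsilon}(u_\varepsilon)$ to the limit varifold $V$ and then argue that a multiplicity-$\ge 2$ simple geodesic contributes nothing to $\ind(V)+\nul(V)$, violating the lower bound. The problem is that the only available transfer of the lower bound $1 \le \ind+\nul$ from the Allen-Cahn level to the limit (Chodosh--Mantoulidis, Theorem~\ref{thm:ind.ub} in the paper) \emph{requires multiplicity one as a hypothesis}; without it you have no mechanism to push that inequality across, so the argument assumes what it seeks to prove. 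Likewise, your claim that each crossing contributes $\ge 1$ to ``$\ind(V)$'' via a length-decreasing resolution is a statement about the varifold's second variation, not about the Allen-Cahn index of $u_\varepsilon$; bridging the two is precisely the content of Mantoulidis's index-concentration analysis, which the paper uses as a black box to obtain $\#\sing\spt V \le 1$.

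The paper's multiplicity-one proof does not go through varifold index at all. Near each component $\Sigma$ of $\spt V$ one uses Wang--Wei's second-order estimates to write $\{u_{\varepsilon_i}=0\}$ as ordered graphical sheets $f_{i,1}<\cdots<f_{i,Q}$ over $\Sigma$ in Fermi coordinates (with extra care at the self-intersection point and at the single possible index-concentration point, handled by a foliation/barrier argument in Lemma~\ref{lem.Linfty}). If $Q\ge 2$, the top-bottom gap $h_i=f_{i,Q}-f_{i,1}$ satisfies a uniformly elliptic equation with right-hand side $o(\inf h_i)$; Harnack gives $\sup h_i\le c\inf h_i$, so $\widehat h_i=h_i/\sup h_i$ subconverges to a nontrivial bounded Jacobi field on $\Sigma$, contradicting strict stability from $K_g<0$. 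Only \emph{after} this does the paper invoke the index-transfer theorems (now legitimate, since multiplicity is one) to rule out the all-simple configuration and force the figure eight. This Wang--Wei/Harnack/Jacobi-field mechanism is the missing idea you allude to in your final paragraph, not a detail to be filled in later.
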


The equation \eqref{eq:width.mult} express the fact $\omega_1$ is realized with multiplicity one. Concerning the bound \eqref{eq:width.mult}, in the case the surface is hyperbolic, we prove that the bound is sharp and is realized asymptotically. More precisely, the following holds.

\begin{thm}\label{thm:sharp}
There exists $a_0 > 0$ such that for all $a \in (0,a_0]$ and for every integer $m \geq 2$, there is a closed hyperbolic surface $S_{m,a}$ of genus $m$ and systole equal to $a$, such that:
\begin{align*}
\omega_1(S_{m,a}) = \length(\sigma_{a}) \xlongrightarrow{a \to 0^{+}} 2\,\mathrm{arccosh}\,3,
\end{align*}
where $\sigma_a$ is a primitive figure eight geodesic of $S_{m,a}$.
\end{thm}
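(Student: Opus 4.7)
The plan is to construct $S_{m,a}$ explicitly via Fenchel--Nielsen coordinates so that it contains a pair of pants all of whose three cuffs have length $a$, identify a canonical figure eight geodesic $\sigma_a$ inside this pair of pants, and conclude $\omega_1(S_{m,a}) = \length(\sigma_a)$ by combining a sweepout upper bound with Theorem \ref{thm:width.charac} and a uniqueness argument for short figure eights.

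First I would fix a pants decomposition of a topological genus-$m$ surface containing a distinguished pair of pants $P_0$ with three cuffs $\gamma_1, \gamma_2, \gamma_3$. Assign $\length(\gamma_i) = a$ for $i = 1, 2, 3$; when $m \geq 3$, assign a fixed large length $L$ to the remaining $3m-6$ cuffs; set every twist parameter to zero. For $a$ small, the resulting hyperbolic surface $S_{m,a}$ has systole exactly $a$, realized by $\gamma_1, \gamma_2, \gamma_3$, because all other primitive simple closed geodesics have length bounded below by a positive constant depending only on $L$ (via the collar lemma and the bounded geometry of the thick part). Inside $P_0$ let $\sigma_a$ be the figure eight geodesic with petals around $\gamma_1, \gamma_2$ and self-intersection on the $\gamma_3$-side. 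Standard hyperbolic trigonometry in a pair of pants gives
\[
\cosh(\length(\sigma_a)/2) = 2\cosh(a/2)^2 + \cosh(a/2),
\]
whence $\length(\sigma_a) \to 2\,\mathrm{arccosh}\,3$ as $a \to 0^+$, in agreement with the classical computation that in the thrice-punctured sphere $\Gamma(2)\backslash\mathbb{H}^2$ the shortest closed geodesic has trace $6$ and length $2\,\mathrm{arccosh}\,3$.

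For the upper bound $\omega_1(S_{m,a}) \leq \length(\sigma_a)$, I would construct a smooth Morse function $f: S_{m,a} \to \re$ whose zero level set equals $\sigma_a$, with a unique saddle critical point at the self-intersection of $\sigma_a$ and every other critical level realized by simple closed curves strictly shorter than $\length(\sigma_a)$; the auxiliary critical points can be placed either inside the widening collars around $\gamma_1, \gamma_2, \gamma_3$ (where level-set lengths collapse to $0$ as $a \to 0$) or inside the thick part (where they stay controlled by $L$), so that the associated sweepout $\{\partial\{f > t\}\}_t$ has maximum length exactly $\length(\sigma_a)$. For the matching lower bound, Theorem \ref{thm:width.charac} writes $\omega_1(S_{m,a}) = \length(\sigma_\infty) + \sum_j \length(\sigma_j)$ for some primitive figure eight $\sigma_\infty$ and disjoint primitive simple closed geodesics $\sigma_j$; combined with the upper bound, $\length(\sigma_\infty) \leq \length(\sigma_a)$. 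I would then argue that, for $a$ small and $L$ large, every figure eight of $S_{m,a}$ other than the three symmetric copies in $P_0$ has length at least some $L_0 > 2\,\mathrm{arccosh}\,3$ independent of $a$: any such figure eight must either lie in a pants with a long cuff (forcing length comparable to $L$) or traverse a collar of some $\gamma_i$ (contributing at least twice the collar half-width, which diverges as $a \to 0$). This forces $\sigma_\infty = \sigma_a$ (up to the $\mathbb{Z}_3$-symmetry of $P_0$) and the absence of any $\sigma_j$, giving $\omega_1(S_{m,a}) = \length(\sigma_a)$. The main obstacle is the sweepout construction in the upper bound: because $\sigma_a$ is a non-separating figure eight and the complement of $\sigma_a$ in $S_{m,a}$ has non-trivial topology, engineering a Morse function whose maximum level-set length is exactly $\length(\sigma_a)$ requires delicate geometric control, feasible only by exploiting the extremely wide collars around the three short cuffs of $P_0$ together with the flexibility in choosing $L$ large.
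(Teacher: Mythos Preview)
Your construction diverges from the paper's at a decisive point, and this creates a genuine gap in the upper bound for $m\ge 3$. You set only the three cuffs $\gamma_1,\gamma_2,\gamma_3$ of the distinguished pants $P_0$ to length $a$ and the remaining $3m-6$ cuffs to a \emph{large} length $L$; the paper instead glues $2m-2$ isometric copies of the pair of pants $\mathrm{Y}_a$ whose three cuffs \emph{all} have length $a$. In the paper's surface every pair of pants carries a figure eight of the same length $\length(\sigma_a)$, and the sweepout is obtained by concatenating the explicit pants sweepouts of subsection~\ref{sec:pants} (open the self-intersection, then run curve shortening toward the cuffs) along the gluing pattern; the maximum on each piece is $\length(\sigma_a)$, hence so is the global maximum. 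Your Morse-function sweepout cannot achieve this on your surface: the sentence ``the auxiliary critical points can be placed \ldots\ inside the thick part (where they stay controlled by $L$)'' is exactly the problem, since $L$ is large while you need every level set to have length at most $\length(\sigma_a)\approx 2\,\mathrm{arccosh}\,3$. A $1$-sweepout lifts to a path $\emptyset\to S_{m,a}$ in $\mathbf{I}_2$, so the complement of $P_0$---a subsurface with three boundary circles of length $a$ but interior systole of order $L$---must be swept out as well, and there is no reason its slices can be kept short.

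In fact your surface does not satisfy $\omega_1=\length(\sigma_a)$. The figure eight $\sigma_a\subset P_0$ does \emph{not} separate $S_{m,a}$ once the complement of $P_0$ is connected (all three regions $\Omega_i$ of $P_0\setminus\sigma_a$ are joined through it), and for small $a$ every figure eight of length $\le\length(\sigma_a)$ must lie in $P_0$ by the collar lemma. Hence the separating configuration $\sigma_\infty\cup\bigcup_j\sigma_j$ produced by Theorem~\ref{thm:width.charac} either has $\length(\sigma_\infty)>\length(\sigma_a)$ or requires at least one extra $\sigma_j$, and in both cases $\omega_1(S_{m,a})>\length(\sigma_a)$. (Compare Theorem~\ref{thm:non_con}, where exactly this mechanism forces an extra systole.) Your lower-bound discussion is also more elaborate than needed: once $\mathrm{sys}(S_{m,a})=a$, every figure eight lies in \emph{some} pair of pants whose cuffs have length $\ge a$, so \eqref{eq:length.fig.8} immediately gives $\length(\sigma_\infty)\ge\length(\sigma_a)$ without any case analysis; this is the paper's argument, and it works uniformly in $m$ as soon as all cuffs are short.
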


Recall the following facts concerning the Laplace spectrum $\{\lambda_{k}\}_{k=1}^{\infty}$ of hyperbolic surfaces:
\begin{itemize}
\item (Buser, 1977 \cite{Bus0}) There exists $\epsilon_0 > 0$ such that for all $\epsilon \in (0,\epsilon_0]$ and for every integer $m \geq 2$, there exist a closed hyperbolic surface $S_{m,\epsilon}$ of genus $m$ such that
$$\lambda_{2m-2}(S_{m,\epsilon}) \leq \frac{1}{4} + \epsilon.$$
\item (Otal-Rosas, 2009 \cite{OR}) Let $S$ be a closed hyperbolic surface of genus $m \geq 2$. Then
$$\lambda_{2m-2}(S) > \frac{1}{4}.$$
(The non-strict inequality was conjectured to hold by Buser and Schmutz \cite{Bus}.)
\end{itemize}
We can think of Theorems \ref{thm:width.charac} and \ref{thm:sharp} as analogues of these results for the case of the volume spectrum.

Theorem \ref{thm:width.charac} is a consequence of a general result about the convergence of level sets of the Allen-Cahn equation. We prove that in closed surfaces of negative curvature, configurations of closed geodesics arising from Allen-Cahn solutions whose energy is bounded, the morse index is at most one and the sum of Morse index and nullity is at least one, have multiplicity one, see Theorem \ref{thm:mult.one}. We remark that for general surfaces the multiplicity one property in the Allen-Cahn functional variational theory does not always hold, as proved recently by Liu-Pacard-Wei \cite{LPW}, however it holds if the limit interface is a strictly stable simple closed geodesic, which is a particular case of a result by Guaraco-Marques-Neves \cite{GMN}.

Observe that in Theorem \ref{thm:width.charac} one always has a figure eight geodesic present on the configuration realizing the width.  However is not clear if there is an example of a surface where the simple closed geodesics also appear. We prove this can happen.

\begin{thm}\label{thm:non_con}
There is $L_0 > 0$ such that, for all $L \in (0,L_0)$, there is a closed hyperbolic surface $S_L$ of genus $2$, whose systole is equal to $L$ and such that
$$\omega_1(S_L) = \length(\sigma_L) + \length(\gamma_L),$$
where $\sigma_L,\gamma_L$ are disjoint primitive closed geodesics, such that $\sigma$ is a figure eight curve and $\gamma_L$ is a systole (hence it is simple).
\end{thm}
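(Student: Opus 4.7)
The plan is to construct an explicit family $\{S_L\}$ of genus-$2$ hyperbolic surfaces and verify the identity $\omega_1(S_L) = \length(\sigma_L) + \length(\gamma_L)$ by combining an upper bound from a sweepout with the structural result Theorem \ref{thm:width.charac}.

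I would construct $S_L$ using Fenchel--Nielsen coordinates on the Teichm\"uller space of genus $2$: take a pants decomposition $\{\gamma_1, \gamma_2, \gamma_3\}$ with $\gamma_1 =: \gamma_L$ of length $L$ (the prospective systole), and $\ell(\gamma_2), \ell(\gamma_3)$ together with twist parameters chosen carefully so that (i) $\gamma_L$ is the unique systole, and (ii) inside one of the two pairs of pants there is a primitive figure-eight geodesic $\sigma_L$ winding around the two non-systolic boundary curves, and hence disjoint from $\gamma_L$. The length $\length(\sigma_L)$ is then determined by the trace formula in terms of $L, \ell(\gamma_2), \ell(\gamma_3)$, and can be tuned by varying the boundary lengths.

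For the upper bound $\omega_1(S_L) \leq \length(\sigma_L) + \length(\gamma_L)$, I would construct a $1$-sweepout of $S_L$ whose maximum-length level is the disjoint union $\sigma_L \cup \gamma_L$. Since $\sigma_L$ and $\gamma_L$ are disjoint, one can build a smooth function $f: S_L \to \re$ with $\sigma_L$ (as a figure-eight critical level with a saddle at the self-intersection) and $\gamma_L$ (as a Morse--Bott critical circle) at the same critical value $c$, with all other level sets of strictly smaller length.

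For the identification of the configuration, apply Theorem \ref{thm:width.charac} to write $\omega_1(S_L) = \length(\sigma_\infty) + \sum_{j=1}^N \length(\sigma_j)$, where $\sigma_\infty$ is a primitive figure-eight and $\sigma_j$ are $N \leq 3$ primitive simple closed geodesics, all pairwise disjoint and disjoint from $\sigma_\infty$. Since each $\length(\sigma_j) \geq L$, combining with the upper bound yields $\length(\sigma_\infty) + N \cdot L \leq \length(\sigma_L) + L$, and a direct analysis of the possible figure-eight and systole lengths on $S_L$ then identifies $N = 1$, $\sigma_1 = \gamma_L$, and $\sigma_\infty = \sigma_L$.

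The main obstacle is ruling out $N = 0$, i.e., showing that the min-max is not realized by a single figure-eight alone. For the chosen $S_L$ this should follow from a careful tuning of the Fenchel--Nielsen parameters so that any $1$-sweepout whose maximum level consists only of a figure-eight has maximum energy strictly greater than $\length(\sigma_L) + L$, which then forces the min-max critical configuration to include the systole. Alternatively, one could proceed by a continuity/deformation argument, starting from the family $S_{2,a}$ of Theorem \ref{thm:sharp} and tracking the transition in the min-max configuration as the relevant geometric parameters vary.
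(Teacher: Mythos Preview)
Your proposal correctly identifies the overall architecture --- upper bound via sweepout, then structural result from Theorem~\ref{thm:width.charac} --- and you correctly pinpoint that ruling out $N=0$ is the crux. However, your suggested mechanisms for this step (``careful tuning of Fenchel--Nielsen parameters'' or a ``continuity/deformation argument'') are not the right idea, and neither would work as stated. The paper resolves this obstacle with a clean topological observation you have overlooked: the separation property. Theorem~\ref{thm:mult.one} (which underlies Theorem~\ref{thm:width.charac}) asserts that $\spt V$ separates $M$. The paper chooses $S_L$ so that \emph{all three} pants curves $\gamma_1,\gamma_2,\gamma_3$ have length $L$; for $L$ small the collar lemma forces any figure-eight realizing the width to lie entirely inside one of the two pairs of pants, and such a figure-eight is \emph{non-separating} in $S_L$. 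Hence separation alone forces $N\geq 1$, and combined with the upper bound one gets $N=1$ and $\sigma_1$ a systole. No tuning or continuity is needed.

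Your construction, with $\gamma_L$ the \emph{unique} systole and $\gamma_2,\gamma_3$ of different lengths, makes the argument harder in two ways. First, the shortest figure-eight in your surface need not be $\sigma_L$; with asymmetric boundary lengths there are several inequivalent figure-eights in each pair of pants, and identifying the minimal one requires extra case analysis. Second, and more seriously, if your $\sigma_L$ winds around $\gamma_2$ and $\gamma_3$ while avoiding $\gamma_L$, it may well \emph{separate} $S_L$ by itself (depending on how the pants are glued), in which case the separation argument gives no lower bound on $N$ and you are back to having no mechanism to exclude $N=0$. The paper's symmetric choice ($\ell(\gamma_i)=L$ for all $i$, with the standard genus-$2$ gluing making each $\gamma_i$ non-separating) is exactly what makes the topology do the work.
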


The surfaces in the Theorems \ref{thm:sharp} and \ref{thm:non_con} have small systoles, and this property plays a crucial hole in the proofs of those results. One can wonder if is possible to compute the $1$-width of surfaces with large systole. Recall that the \emph{Bolza surface} $\mathrm{B}$ maximizes the systole among closed hyperbolic surfaces of genus $2$, \cite[Theorem Theorem 5.3]{SS}.

\begin{thm}\label{thm:Bolza}
The first width of the Bolza surface satisfies
$$\omega_1(\mathrm{B}) = \length(\sigma),$$
where $\sigma$ is a separating figure eight geodesic, which has the least length among all such curves in $\mathrm{B}$.
\end{thm}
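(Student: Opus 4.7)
The plan is to sandwich $\omega_1(\mathrm{B})$ between an upper bound from an explicit sweepout and the lower bound produced by Theorem \ref{thm:width.charac}, then identify the minimum using the specific geometry of the Bolza surface. Denote by $L_0$ the minimum length of a primitive separating figure eight geodesic on $\mathrm{B}$; this infimum is attained since the length spectrum is discrete, and such geodesics exist on every closed hyperbolic surface of genus $\geq 2$.

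For the upper bound $\omega_1(\mathrm{B}) \leq L_0$, let $\sigma_0$ realize $L_0$: it separates $\mathrm{B}$ into two open regions $\Omega_\pm$, and I would construct a smooth function $f: \mathrm{B} \to [-1,1]$ equal to $\pm 1$ outside a thin neighborhood of $\sigma_0$, with a nondegenerate saddle at the self-intersection of $\sigma_0$ and critical level $\{f=0\} = \sigma_0$. Composing $f$ with $\tanh(\cdot/\sqrt{\varepsilon})$ yields admissible one-parameter Allen-Cahn families whose maximum energy concentrates on $\sigma_0$; the equivalence of the Allen-Cahn width with $\omega_1$ \cite{GG, Dey, CM2} then gives the desired upper bound.

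For the lower bound, Theorem \ref{thm:width.charac} supplies a primitive fig-8 $\sigma_\infty$ and a (possibly empty) family of disjoint primitive simple closed geodesics $\sigma_1, \dots, \sigma_N$ with $\omega_1(\mathrm{B}) = \length(\sigma_\infty) + \sum_{j=1}^N \length(\sigma_j)$. If $\sigma_\infty$ is separating, then $\length(\sigma_\infty) \geq L_0$ by definition, and comparison with the upper bound forces $N = 0$ and $\omega_1(\mathrm{B}) = L_0$, proving the theorem. If $\sigma_\infty$ is non-separating, I would show $\length(\sigma_\infty) + \sum_j \length(\sigma_j) \geq L_0$ directly by exploiting the large systole $\mathrm{sys}(\mathrm{B}) = 2\,\mathrm{arccosh}(1+\sqrt{2})$ (each $\length(\sigma_j) \geq \mathrm{sys}(\mathrm{B})$), the topology of the complement of a non-separating fig-8 on a genus-$2$ surface (two pairs of pants glued along three boundary circles, or two one-holed tori glued along a single separating circle), and explicit trace-formula computations in the Bolza Fuchsian group.

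The main obstacle is this last step. The key input is a classification of short primitive fig-8 geodesics on $\mathrm{B}$: using the $48$-element automorphism group, primitive fig-8's fall into finitely many symmetry classes with explicitly computable lengths. If the shortest primitive fig-8 on $\mathrm{B}$ happens to be separating, the non-separating case is immediate; otherwise one must combine the length of a non-separating fig-8 with the systole lower bound on the forced disjoint simple geodesics (via the collar lemma and topological constraints on the complement) to show that the total still exceeds $L_0$. The argument reduces to a finite computation intrinsic to the Bolza geometry.
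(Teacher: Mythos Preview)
Your overall architecture---sandwich $\omega_1(\mathrm{B})$ between an explicit sweepout upper bound and the structural lower bound from Theorem~\ref{thm:width.charac}---matches the paper's. The upper bound you sketch via an Allen--Cahn family built from a Morse-like function with a saddle at the self-intersection is plausible, though the paper instead uses the concrete pants sweepout of Section~\ref{sec:pants}: it identifies a pants decomposition of $\mathrm{B}$ with one separating boundary geodesic and two systoles, takes the figure eight in one of those pants as $\sigma_0$, and reads off $L_0$ from Buser's length formula. Either route gives $\omega_1(\mathrm{B})\le L_0$.

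The gap is in your treatment of the non-separating case, which you flag as the main obstacle and propose to resolve by trace-formula computations and a symmetry classification of figure eights. This is unnecessary, and the paper's argument is much shorter. You are not using the full strength of Theorem~\ref{thm:width.charac} (equivalently Theorem~\ref{thm:mult.one}): the support $\spt V$ \emph{separates} $M$. Hence if $\sigma_\infty$ is non-separating, the collection $\{\sigma_j\}$ cannot be empty, so $N\ge 1$ and
\[
\omega_1(\mathrm{B}) \;\ge\; \length(\sigma_\infty) + \mathrm{sys}(\mathrm{B}).
\]
Now the shortest figure eight on $\mathrm{B}$ sits in the pair of pants bounded by three systoles, and Buser's formula gives its length $L_\beta$ via $\cosh(L_\beta/2)=\cosh(\mathrm{sys}(\mathrm{B})/2)+2\cosh^2(\mathrm{sys}(\mathrm{B})/2)=7+5\sqrt{2}$. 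A direct numerical check shows
\[
\mathrm{sys}(\mathrm{B}) + L_\beta \;\approx\; 3.057 + 6.672 \;\approx\; 9.729 \;>\; L_0 \;\approx\; 9.482,
\]
contradicting the upper bound. So $N=0$, which forces $\sigma_\infty$ itself to separate, and then $\length(\sigma_\infty)\ge L_0$ by definition. No classification of figure eights or trace-formula input is needed---only the separating conclusion of Theorem~\ref{thm:width.charac}, the monotonicity of the figure-eight length formula in the pants boundary lengths, and two explicit numbers.
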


\noindent
{\bf Plan of the paper}: In Section \ref{sec:2} we recall the results of min-max theory used along the text. Next, in Section \ref{sec:3}, we prove Theorem \ref{thm:width.charac}. In Section \ref{sec:4} we present applications to hyperbolic surfaces.\\  %Finally, in Section \ref{sec:5} we prove the inequality $\omega_1 < \omega_2$.\\

\noindent
{\bf Acknowledgements}: I wish to express my gratitude to professor André Neves for suggesting me this project and for very stimulating discussions about it. Also, I would like to thank Pedro Gaspar for his willingness to answer questions about the Allen-Cahn equation on several different occasions.  I also thank professor Fernando C. Marques for his interest in this work. Finally, I am grateful to the University of Chicago for the hospitality, where part of the research in this paper was developed. %Finally, I am grateful to Lucas Ambrozio for explaining me some of the technical details of \cite{AMN}.
During the preparation of this work I was supported by the  Simon Investigator Grant of Professor André Neves, by the Grant Serrapilheira/FAPERGS 23/2551-0001590-0 and by the CNPq-Brazil Grant 406666/2023-7.

\section{Min-max theory}\label{sec:2}

Let $I(1,j)$ be the cube complex on $I := [0,1]$ whose $1$-cells are 
$$[0,3^{-j}],[3^{-j},2 \cdot 3^{-j}],\dots,[1-3^{-j},1].$$
Define $I(m,j)$ as the cell complex on $I^m$ given by 
$$I(1,j)\otimes\cdots\otimes I(1,j).$$
A \emph{cubical subcomplex} $X\subset I^k$ is a subcomplex of $I(k,j)$ for some $j \in \N$. If $X$ is a subcomplex of $I(k,j)$, for $\ell\in\N$, denote by $X(\ell)$ the subcomplex of $I(k,j+\ell)$ given by the union of all cells whose support is contained in some cell of $X$. We denote by $X(\ell)_0$ the set of $0$-cells in $X(\ell)$.

Let $\big(M,g\big)$ be a closed oriented Riemannian $2$-manifold isometrically embedded in some Euclidean space. In the following we are going to suppress the reference to the metric $g$.
\vspace{0.2cm}

\noindent
{\bf Notation}:
\begin{itemize}
\item $\mathcal{V}_1(M)$ - the space of $1$-varifolds on $M$;
\item $\mathcal{I}\mathcal{V}_1(M)$ - the space of integral rectifiable $1$-varifolds on $M$.
\item $\mathbf{I}_k(M;\mathbb{Z}_2)$ - the set of $k$-dimensional mod $2$ flat chains in $\re^J$ with support in $M$;
\item $\mathcal{Z}_1(M;\mathbb{Z}_2)\subset \mathbf{I}_1(M;\mathbb{Z}_2)$ - the set of cycles, i.e., the set of $T \in \mathbf{I}_1(M;\mathbb{Z}_2)$ such that $T = \pa U$ for some $U \in \mathbf{I}_2(M;\mathbb{Z}_2)$.
\item If $\sigma$ is a primitive closed curve in $M$ we denote by $\mathbf{v}(\sigma,\mathbf{1}_{\sigma})$ the $1$-varifold of multiplicity one induced by $\sigma$.
\end{itemize}

For $T \in \mathbf{I}_1(M;\mathbb{Z}_2)$, denote by $|T|,\Vert T\Vert$ the associated integral varifold and Radon measure on $M$ respectively. Also, for $V \in \mathcal{V}_1(M)$, we denote by $\Vert V\Vert$ the associated Radon measure on $M$. Recall we can endow $\mathbf{I}_1(M;\mathbb{Z}_2)$ with three important topologies \cite[page 66]{P}, induced by the following objects:
\begin{itemize}
\item[(a)] the \emph{flat metric} $\mathcal{F}$;
\item[(b)] the $\mathbf{F}$-metric;
\item[(c)] the \emph{mass functional} $\mathbf{M}$.
\end{itemize}

We will use the notation $\mathcal{Z}_1(M,\mathcal{T};\mathbb{Z}_2)$ for the set $\mathcal{Z}_1(M;\mathbb{Z}_2)$ endowed with one the topologies $\mathcal{T} = \mathcal{F},\mathbf{F},\mathbf{M}$.

\subsection{The $k$-widths}\label{subsec:1-width}

In what follows $X$ denotes a cubical subcomplex of some $I^m$. %Every such cubical complex is homeomorphic to a finite simplicial complex and vice-versa (see \cite[Chapter 4]{BP}).

Almgren proved that $\mathcal{Z}_{1}(M;\mathbb{Z}_{2})$ is weakly homotopic to $\re P^{\infty}$ \cite{Al} (for a simpler proof see \cite[Theorem 5.1]{MaNe3}). So
$$H^{1}(\mathcal{Z}_{1}(M;\mathbb{Z}_{2});\mathbb{Z}_{2}) = \mathbb{Z}_{2}.$$
Denote by $\bar \lambda$ the generator.

\begin{defin}\label{defi:sweepout}
A map $\Phi : X\to \mathcal{Z}_{1}(M;\mathbb{Z}_{2})$ is a $k$-sweepout if it is continuous in the flat topology $\mathcal{F}$ and $\Phi^{*}(\bar\lambda^{k}) \neq 0$,
where 
$$\bar\lambda^{k} = \bar\lambda\smile\ldots\smile\bar\lambda.$$
Moreover, $\Phi$ is said to have no concentration of mass if
$$ \lim_{r \to 0} \sup \big\{\Vert\Phi(x) \Vert(B_r(p)) : x \in X, p \in M\big\} = 0. $$
\end{defin}

\begin{defin}\label{defi:p-width}
We define $\mathcal{P}_{k} = \mathcal{P}_{k}(M)$ to be the set of all $k$-sweepouts $\Phi: X \to \mathcal{Z}_{1}(M;\mathbb{Z}_{2})$ with no concentration of mass, where $X$ is any cubical subcomplex. The \emph{$k$-width} of $(M,g)$ is defined by
$$\omega_{k}(M,g) = \inf_{\Phi \in \mathcal{P}_{k}} \sup_{x \in \dmn(\Phi)}\mathbf{M}\big(\Phi(x)\big).$$
\end{defin}

Define 
$$\mathcal{P}^{\mathbf{F}}_{k,m} : = \{\Phi \in \mathcal{P}_{k}: \dmn(\Phi) \subset I^m\ \text{and}\ \Phi\ \text{is}\ \mathbf{F}\text{-continuous}\}.$$

\begin{lem}[Li \cite{Li}, Chodosh-Mantoulidis \cite{CM2}] \label{lem:bound.dim}
If $m = 2k+1$, then
$$\omega_{k}(M,g) = \inf_{\Phi \in \mathcal{P}^{\mathbf{F}}_{k,m}} \sup_{x \in \dmn(\Phi)}\mathbf{M}\big(\Phi(x)\big).$$
\end{lem}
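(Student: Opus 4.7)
The inequality
$$\omega_k(M,g) \leq \inf_{\Phi \in \mathcal{P}^{\mathbf{F}}_{k,m}} \sup_{x \in \dmn(\Phi)} \mathbf{M}(\Phi(x))$$
is automatic from the inclusion $\mathcal{P}^{\mathbf{F}}_{k,m} \subset \mathcal{P}_k$. The whole content of the lemma lies in the reverse inequality: any $k$-sweepout can be replaced, with an arbitrarily small loss in sup-mass, by one whose domain is a cubical subcomplex of $I^{2k+1}$ and which is $\mathbf{F}$-continuous, not merely continuous in the flat topology.

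The plan is to fix $\epsilon > 0$, take $\Phi \colon X \to \mathcal{Z}_1(M,\mathcal{F};\mathbb{Z}_2)$ in $\mathcal{P}_k$ with $\sup_X \mathbf{M}(\Phi) < \omega_k + \epsilon$, and produce $\Psi \in \mathcal{P}^{\mathbf{F}}_{k,2k+1}$ with $\sup \mathbf{M}(\Psi) \leq \omega_k + 2\epsilon$. I would carry this out in two stages.

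Stage 1 (reduction of the domain to a subcomplex of $I^{2k+1}$). Since $\mathcal{Z}_1(M;\mathbb{Z}_2) \simeq \mathbb{R}P^\infty$, the homotopy class of $\Phi$ is classified by the class $\alpha := \Phi^*(\bar\lambda) \in H^1(X;\mathbb{Z}_2)$, and the $k$-sweepout condition is exactly $\alpha^k \neq 0$. By cellular approximation I would factor $\Phi$, up to flat-continuous homotopy and with negligible increase in sup-mass, through a map $X \to Y$ where $Y$ is a cubical subcomplex of $I^{2k+1}$ carrying a class dual to $\bar\lambda^k$; such a $Y$ exists because $\mathbb{R}P^k$ embeds smoothly as a PL submanifold of $\mathbb{R}^{2k+1}$ and may therefore be realized, after a suitable subdivision, as a cubical subcomplex. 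One must check that the resulting map still detects $\bar\lambda^k$ and preserves the absence of mass concentration; both are routine from the construction.

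Stage 2 (upgrade from $\mathcal{F}$-continuity to $\mathbf{F}$-continuity). Here I would invoke the Almgren--Pitts discretization-interpolation machinery as refined by Marques--Neves and Liokumovich--Marques--Neves. The no-mass-concentration hypothesis allows one to discretize $\Phi$ on sufficiently fine subdivisions $Y(\ell)_0$ into discrete maps into $\mathcal{Z}_1(M;\mathbb{Z}_2)$ whose consecutive vertex-pairs have $\mathbf{F}$-distance tending to zero and whose discrete mass supremum converges to $\sup \mathbf{M}(\Phi)$. The interpolation theorem then extends these discrete maps to an $\mathbf{F}$-continuous map $\Psi$ on the full cubical subcomplex $Y(\ell)$, with only a vanishing additional sup-mass, while preserving $\Psi^*(\bar\lambda^k) \neq 0$. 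For $\ell$ large enough, $\Psi$ lies in $\mathcal{P}^{\mathbf{F}}_{k,2k+1}$ and achieves the desired bound.

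The main obstacle I anticipate is Stage 2: the interpolation step is delicate, since passing from $\mathcal{F}$- to $\mathbf{F}$-control requires producing $\mathbf{F}$-short paths between nearby cycles, and the no-concentration-of-mass hypothesis is precisely what makes such control possible along the discretization scale. Stage 1, while requiring care to simultaneously preserve the mass bound, the cohomology class, and the absence of mass concentration, is essentially topological and follows well-established patterns in the min-max literature.
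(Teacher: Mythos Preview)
The paper does not supply its own proof here; the lemma is quoted from Li and Chodosh--Mantoulidis. Your Stage~2 is correct and matches those references: the discretization/interpolation machinery, with the no-concentration-of-mass hypothesis, is exactly how one upgrades $\mathcal{F}$-continuity to $\mathbf{F}$-continuity at arbitrarily small cost in sup-mass.

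Your Stage~1, however, has the arrow pointing the wrong way, and this is a genuine gap. You propose to factor $\Phi$ \emph{through} a map $p\colon X \to Y$ with $Y \cong \mathbb{R}P^k$, i.e.\ to write $\Phi \simeq \Psi \circ p$ and keep $\Psi\colon Y \to \mathcal{Z}_1$. Two things fail. First, such a factorization need not exist: being a $k$-sweepout only says $\alpha^k \neq 0$ for $\alpha = \Phi^*\bar\lambda$ and places no constraint on $\alpha^{k+1}$; if $\alpha^{k+1}\neq 0$ (for instance if $\Phi$ happens also to be a $(k{+}1)$-sweepout) then $\Phi$ cannot be homotoped into $\mathbb{R}P^k \subset \mathbb{R}P^\infty \simeq \mathcal{Z}_1$. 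Second, even when a factorization exists up to flat homotopy, nothing controls $\sup_Y \mathbf{M}(\Psi)$: the homotopy lives in $\mathcal{Z}_1$ and may pass through cycles of arbitrarily large mass, so ``negligible increase in sup-mass'' is unjustified.

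The argument in the cited references runs in the opposite direction: one \emph{precomposes}. Since $\alpha^k \neq 0$ in $H^k(X;\mathbb{Z}_2)$, there is a cubical $k$-cycle in (a refinement of) $X$ detecting it, and its support is a $k$-dimensional subcomplex $X' \subset X$ on which the restriction $i^*\alpha$ still has nonzero $k$-th power. Thus $\Phi|_{X'}$ is again a $k$-sweepout, with $\sup_{X'}\mathbf{M}(\Phi|_{X'}) \leq \sup_X \mathbf{M}(\Phi)$ trivially and no-mass-concentration inherited. Finally, any $k$-dimensional cubical complex embeds, by general position after subdivision, as a subcomplex of some $I(2k+1,j)$; this is where the bound $m=2k+1$ comes from, and the Whitney embedding of $\mathbb{R}P^k$ plays no role.
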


\subsection{Almgren--Pitts theory}\label{subsec:cont-AP}

\begin{defin}\label{defi:ap.homotopy}
	Let $X$ be a cubical subcomplex and consider a continuous map $\Psi : X \to \mathcal{Z}_1(M; \mathbf{F}; \mathbb{Z}_2)$. We define the \textit{homotopy class} of $\Phi$ to be the set
	\begin{align*}
\Pi & := \{\Phi : X \to \mathcal{Z}_1(M; \mathbf{F}; \mathbb{Z}_2);\, \Phi\ \text{is continuous}\\ 
&\quad\quad \text{and it is homotopic to } \Psi \text{ in the } \mathcal{F} \text{-topology} \}.
\end{align*}
The Almgren-Pitts width of $\Pi$ is defined by
$$\mathbf{L}_{\textrm{AP}}(\Pi) = \inf_{\Phi \in \Pi} \sup_{x\in X} \mathbf{M}(\Phi(x)).$$
\end{defin}

We say $\{\Phi_i\}_{i=1}^{\infty}  \subset \Pi$ is a minimizing sequence if   
$$\limsup_{i\to\infty}\sup_{x\in X}\mathbf{M}(\Phi_i(x)) = \mathbf{L}_\textrm{AP}(\Pi).$$
The image set $\mathbf{\Lambda}(\{\Phi_i\})$ of $\{\Phi_i\}$ is defined as the set of varifolds $V \in \mathcal{V}_1(M)$, such that there is $i_\ell\to\infty$ and $x_\ell\in X$ with 
$$\lim_{\ell\to\infty}\mathbf{F}(|\Phi_{i_\ell}(x_\ell)|,V) = 0,$$ 
and the \emph{critical set} of $\{\Phi_i\}_{i=1}^{\infty}$ is defined as
$$\mathbf{C}(\{\Phi_i\}) = \{V\in\mathbf{\Lambda}(\{\Phi_i\}) : \Vert V\Vert(M) = \mathbf{L}_\textrm{AP}(\Pi)\}. $$

By the following result one can improve a minimizing sequence into a new minimizing sequence whose critical set consists only of stationary varifolds.

\begin{prop}\label{prop:pull.tight}
Following the previous notations, suppose $\mathbf{L}_{\textrm{AP}}(\Pi)>0$. Then, for any minimizing sequence $\{\Phi_i\}_{i=1}^\infty \subset \Pi$, there exists another minimizing sequence $\{\Phi_{i}^{\ast}\}_{i=1}^\infty \subset \Pi$, such that $\mathbf{C}\big(\{\Phi_{i}^{\ast}\}_{i=1}^\infty\big) \subset \mathbf{C}(\{\Phi_i\}_{i=1}^\infty)$ and every varifold in $\mathbf{C}\big(\{\Phi_{i}^{\ast}\}_{i=1}^\infty\big)$ is stationary. 
\end{prop}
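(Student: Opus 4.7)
The strategy is the classical \emph{pull-tight} argument, originally due to Pitts and recast for the continuous $\mathbf{F}$-topology setting by Marques--Neves. The idea is to construct a continuous family of ambient flows, indexed by the varifold itself, that strictly decreases mass at any non-stationary limit. The starting point is the standard fact that if $V \in \mathcal{V}_1(M)$ is not stationary, then there is a smooth vector field $X_V$ on $M$ with $\delta V(X_V) \leq -2$. Since $W \mapsto \delta W(X_V)$ is continuous on $\mathcal{V}_1(M)$ with the $\mathbf{F}$-topology, there is an $\mathbf{F}$-open neighborhood $U_V$ of $V$ on which $\delta W(X_V) \leq -1$.

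To globalize, let $\mathcal{V}_\infty \subset \mathcal{V}_1(M)$ denote the closed set of stationary varifolds and decompose the relevant region of varifold space into the annuli
$$A_n = \{V \in \mathcal{V}_1(M) \,:\, \|V\|(M) \leq \mathbf{L}_{\mathrm{AP}}(\Pi) + 1,\ 2^{-n-1} \leq \mathbf{F}(V,\mathcal{V}_\infty) \leq 2^{-n+1}\}.$$
Each $A_n$ is $\mathbf{F}$-compact, so it is covered by finitely many neighborhoods $U_{V_{n,j}}$ with vector fields $X_{n,j}$. A partition of unity $\{\eta_{n,j}\}$ subordinate to this cover, together with a partition $\{\chi_n\}$ across the annuli, produces a continuous assignment $V \mapsto Y(V) \in C^\infty(TM)$ defined for $V$ of bounded mass with $V \notin \mathcal{V}_\infty$, satisfying $\delta V(Y(V)) \leq -\mu(V)$ for a positive function $\mu$ depending on $\mathbf{F}(V,\mathcal{V}_\infty)$.

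Let $\phi^Z_t$ denote the time-$t$ flow of $Z$, and let $\tau : \mathcal{V}_1(M) \to [0,1]$ be a continuous cutoff which vanishes on $\mathcal{V}_\infty$ and equals $1$ where $\mathbf{F}(V,\mathcal{V}_\infty) \geq 1$. For $\varepsilon_i \downarrow 0$ to be fixed, define
$$\Phi_i^*(x) = \bigl(\phi^{Y(|\Phi_i(x)|)}_{\varepsilon_i\, \tau(|\Phi_i(x)|)}\bigr)_{\#}\Phi_i(x).$$
The $\mathbf{F}$-continuity of $x \mapsto \Phi_i^*(x)$ follows from $\mathbf{F}$-continuity of $\Phi_i$ and from the continuity of pushforward by a varifold-parameterized diffeomorphism. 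The one-parameter family $s \mapsto \bigl(\phi^{Y(|\Phi_i(x)|)}_{s\varepsilon_i\tau(|\Phi_i(x)|)}\bigr)_{\#}\Phi_i(x)$ gives an explicit $\mathbf{F}$-homotopy (hence $\mathcal{F}$-homotopy) from $\Phi_i$ to $\Phi_i^*$, so $\Phi_i^* \in \Pi$.

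From the first-variation expansion $\mathbf{M}((\phi^Z_t)_\# W) = \mathbf{M}(W) + t\,\delta W(Z) + O(t^2)$ one obtains $\mathbf{M}(\Phi_i^*(x)) \leq \mathbf{M}(\Phi_i(x)) + C\varepsilon_i^2$, with a quantitative strict decrease of size at least $c\,\varepsilon_i \mu$ wherever $\mathbf{F}(|\Phi_i(x)|,\mathcal{V}_\infty)$ is bounded below. Choosing $\varepsilon_i\downarrow 0$ slowly ensures that $\{\Phi_i^*\}$ is still minimizing, that $\mathbf{F}(\Phi_i^*(x),\Phi_i(x)) \to 0$ uniformly in $x$ (yielding $\mathbf{C}(\{\Phi_i^*\}) \subset \mathbf{C}(\{\Phi_i\})$), and that any non-stationary $V \in \mathbf{C}(\{\Phi_i^*\})$ would produce a uniform mass loss $\|V\|(M) < \mathbf{L}_{\mathrm{AP}}(\Pi)$, contradicting its critical status. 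Hence every element of $\mathbf{C}(\{\Phi_i^*\})$ is stationary. The main obstacle is Step~2: the partition-of-unity construction must be assembled so that $V \mapsto Y(V)$ is genuinely $\mathbf{F}$-continuous on the non-stationary region and the induced deformation fits into a jointly $\mathbf{F}$-continuous map of $x$; this is where the reduction to the $\mathbf{F}$-compact annuli $A_n$, and the cutoff $\tau$ absorbing all ambiguity near $\mathcal{V}_\infty$, are crucial.
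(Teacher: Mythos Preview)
The paper does not actually prove this proposition: it is stated in Section~\ref{subsec:cont-AP} as a background result from Almgren--Pitts theory, with the implicit reference being the pull-tight construction of Pitts and its continuous-setting adaptation by Marques--Neves. Your sketch is the standard pull-tight argument and is essentially correct as an outline; in particular the decomposition into $\mathbf{F}$-compact annuli, the partition-of-unity assembly of the vector field $Y(V)$, and the homotopy through pushforwards are exactly the ingredients used in the literature (see e.g.\ \cite{MaNe,MaNe2}). One small point: you should be careful that the map $x \mapsto Y(|\Phi_i(x)|)$ lands continuously in a fixed $C^1$ (or $C^k$) topology on vector fields, not merely in $C^\infty$ with some unspecified topology, so that the pushforward $(\phi^{Y}_{t})_{\#}$ depends $\mathbf{F}$-continuously on the pair $(Y,t)$ with uniform estimates; this is implicit in your ``main obstacle'' remark and is handled in the references by working with a fixed $C^1$-bounded family of vector fields on each annulus.
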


\subsection{Phase transition theory} \label{subsec:AC} 

\begin{defin} \label{defi:ac.potential.general}
A smooth function $W : \re \to \re$ is said to be a double-well potential if it satisfies the following properties:
\begin{enumerate}
\item[(a)] $W \geq 0$;
\item[(b)] $W(-t) = W(t)$, for all $t \in \re$;
\item[(c)] $t W^{\prime}(t) < 0$, for $0 < |t| < 1$;
\item[(d)] $W^{\prime\prime}(\pm 1) > 0$.
\end{enumerate}
\end{defin}

Fix a double-well potential $W$. For $\varepsilon > 0$, define the \emph{$\varepsilon$-phase transition energy} of a function $u \in C^\infty(M)$ to be 
\begin{equation} \label{eq:ac.energy}
E_{\varepsilon}(u) = \int_{M} \left(\frac \varepsilon 2 |\nabla u|^{2} + \frac{1} {\varepsilon} W(u)\right)d\mathcal{H}^2.
\end{equation}
A function $u$ is a critical point of $E_\varepsilon$ if, and only if, solves the equation
\begin{equation} \label{eq:ACeqt}
\varepsilon^2 \Delta u = W'(u).
\end{equation}

The second variation of $E_\varepsilon$ at a critical point $u$ is given by
$$D^2 E_\varepsilon(u)\cdot(v,w) = \int_M \big(\varepsilon \langle\nabla v,\nabla w\rangle + \varepsilon^{-1}W''(u) vw\big)d\mathcal{H}^2,\quad \forall\, v, w \in C^\infty(M).$$
The \emph{Morse index} of $u$, denoted $\ind_{E_\varepsilon}(u)$, is a non negative integer defined as the maximum of the following set
$$\big\{\dim V; V\subset C^\infty_c(U) \textrm{ linear subspace with } D^2 E_\varepsilon(u)\cdot(v,v) < 0, \forall\, v\in V\setminus\{0\}\big\},$$
and the \emph{nullity} of $u$, denoted $\nul_{E_{\varepsilon}}(u)$, is defined as
$$\dim\big\{v \in W^{1,2}(M);\, D^2 E_\varepsilon(u)\cdot(v,v) = 0\big\}.$$

The min-max construction of solutions of \eqref{eq:ACeqt} was described by Gaspar and Guaraco \cite{GG,G} (see also \cite{Dey}). As in the previous subsection, $X$ is a cubical subcomplex of $I^{m}$ for some $m \in \N$. 
Fix any double cover $\pi : \widetilde X \to X$. Write $\Pi$ for the $\mathcal{F}$-homotopy class of $\mathbf{F}$-continuous maps corresponding to $\pi$, i.e., $\Phi : X\to \mathcal{Z}_{1}(M;\mathbf{F};\mathbb{Z}_{2})$ is in $\Pi$ whenever
$$
\ker(\Phi_{*} : \pi_1(X) \to \pi_1(\mathcal{Z}_1(M;\mathbb{Z}_2))) =\im \pi_{*} \subset \pi_{1}(X).
$$
Note that fixing $\Pi$ is the same as fixing the double cover $\pi:\widetilde X\to X$.

Since $\mathbf{I}_{2}(M;\mathbb{Z}_{2})$ is contractible and paracompact, 
$$\partial : \mathbf{I}_{2}(M;\mathbb{Z}_{2}) \to \mathcal{Z}_{1}(M;\mathbb{Z}_{2})$$
is a $\mathbb{Z}_{2}$-principal bundle. Since $W^{1,2}(M)\setminus\{0\}$ is contractible and paracompact with a free $\mathbb{Z}_{2}$ action $u\mapsto -u$, it is the total space of a $\mathbb{Z}_{2}$-principal bundle. We denote by $\widetilde \Pi$ the space of $\mathbb{Z}_{2}$-equivariant maps $h: \widetilde X \to W^{1,2}(M)\setminus\{0\}$. We define \emph{$\varepsilon$-phase transition width} of $\widetilde \Pi$ by 
$$\mathbf{L}_{\varepsilon}(\widetilde \Pi) = \inf_{h\in \widetilde\Pi} \sup_{x\in\widetilde X} E_{\varepsilon}\big(h(x)\big).$$
We say that $u \in W^{1,2}(M)\setminus\{0\}$ is a \emph{min-max critical point} if $E_{\varepsilon}(u) = \mathbf{L}_{\varepsilon}(\widetilde\Pi)$ and there is a minimizing sequence $\{h_{i}\}_{i=1}^{\infty}\subset \widetilde\Pi$ with
$$\lim_{i\to\infty} d_{W^{1,2}(M)}(u,h_{i}(\widetilde X)) = 0.$$

\begin{prop}[\cite{G,GG}]\label{prop:AC.min.max}
Suppose $\mathbf{L}_{\varepsilon}(\widetilde \Pi) < E_{\varepsilon}(0) = \area(M)/\varepsilon$. Then there is a min-max critical point $u_{\varepsilon}$ of $E_{\varepsilon}$. Moreover the function $u_\varepsilon$ is a solution of \eqref{eq:ACeqt}, satisfies the bound $|u_\varepsilon| < 1$, and has its index is such that $\ind_\varepsilon(u_\varepsilon) \leq \dim X = k$.
\end{prop}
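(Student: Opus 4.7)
The plan is a standard equivariant min-max construction for the smooth $\mathbb{Z}_2$-invariant functional $E_\varepsilon$ on the Hilbert manifold $W^{1,2}(M)\setminus\{0\}$, in the spirit of classical Lusternik--Schnirelmann theory. The hypothesis $\mathbf{L}_\varepsilon(\widetilde\Pi) < E_\varepsilon(0) = \area(M)/\varepsilon$ is precisely what prevents the trivial solution $u\equiv 0$ from being detected: every minimizing sequence may be assumed to lie in a sublevel set bounded strictly away from $\area(M)/\varepsilon$, and in particular away from a $W^{1,2}$-neighborhood of $0$, so the construction takes place entirely in the smooth Hilbert manifold on which $E_\varepsilon$ is defined.

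First I would verify the Palais--Smale condition for $E_\varepsilon$ on $W^{1,2}(M)$: given $u_i$ with $E_\varepsilon(u_i)$ bounded and $DE_\varepsilon(u_i) \to 0$, the growth of $W$ at infinity yields an $L^2$ (hence $W^{1,2}$) bound on $u_i$, and elliptic estimates applied to $\varepsilon^2\Delta u_i = W'(u_i) + o(1)$ extract a strongly convergent subsequence. Using the $\mathbb{Z}_2$-invariance of $E_\varepsilon$, one builds a $\mathbb{Z}_2$-equivariant pseudo-gradient vector field on $W^{1,2}(M)\setminus\{0\}$ and uses its flow to pull a minimizing sequence $\{h_i\}\subset\widetilde\Pi$ tight, producing $\{h_i^*\}\subset\widetilde\Pi$ whose only accumulation points at the critical level are critical points. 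Palais--Smale then delivers $u_\varepsilon$. Elliptic bootstrapping on \eqref{eq:ACeqt} gives smoothness, and the bound $|u_\varepsilon| < 1$ follows from testing \eqref{eq:ACeqt} against $(u_\varepsilon - 1)_+$ and using $W'(t) \geq 0$ for $t \geq 1$ (a standard feature of double-well potentials), together with the strong maximum principle, since $u_\varepsilon\equiv 1$ would force $E_\varepsilon(u_\varepsilon) = 0 < \mathbf{L}_\varepsilon(\widetilde\Pi)$; the lower bound is symmetric.

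The Morse index bound $\ind_{E_\varepsilon}(u_\varepsilon) \leq k$ is the delicate step, and I expect it to be the main obstacle. The argument is an adaptation of the classical deformation theorem from Morse theory. If one had $\ind_{E_\varepsilon}(u_\varepsilon) \geq k+1$, there would be a $(k+1)$-dimensional subspace $V \subset C^\infty(M)$ on which $D^2E_\varepsilon(u_\varepsilon)$ is negative definite; via $V$ and its image under $u \mapsto -u$ one constructs a $\mathbb{Z}_2$-equivariant local deformation of $\widetilde\Pi$ that strictly decreases $E_\varepsilon$ in a neighborhood of the orbit $\{\pm u_\varepsilon\}$. Combined with the pull-tight procedure, which has already lowered $\sup E_\varepsilon$ off that orbit, this yields a competitor in $\widetilde\Pi$ with strictly smaller maximum energy, contradicting the definition of $\mathbf{L}_\varepsilon(\widetilde\Pi)$. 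The technical difficulty is executing the local deformation equivariantly on the double cover $\widetilde X$ of the $k$-dimensional parameter space while respecting the homotopy class $\widetilde\Pi$; this is exactly where the dimensional comparison $\dim X = k < k+1 = \dim V$ enters to force the contradiction.
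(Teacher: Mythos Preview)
The paper does not supply its own proof of this proposition; it is stated with the attribution \cite{G,GG} and used as a black box. Your outline accurately sketches the strategy carried out in those references---Palais--Smale for $E_\varepsilon$, a $\mathbb{Z}_2$-equivariant pseudo-gradient deformation on $W^{1,2}(M)\setminus\{0\}$, elliptic regularity and the maximum principle for $|u_\varepsilon|<1$, and a deformation argument exploiting a $(k+1)$-dimensional negative subspace to derive the index bound---so there is no discrepancy to discuss.
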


To every solution $u$ of \eqref{eq:ACeqt} one can associate a 1-varifold on $M$, defined as the unique  $V_\varepsilon[u] \in \mathcal{V}_1(M)$ such that
$$V_\varepsilon[u](f) := h_0^{-1} \int_M \varepsilon |\nabla u|^2 f(x, \Tan_x \{ u = u(x) \})\,d\mathcal{H}^{2},
\quad \forall\, f \in C^0(G_1(M)).$$

\begin{prop}[{\cite[Theorem 1]{HT}, \cite{T}, \cite[Appendix B]{G}}]\label{prop:HT.theory}
Let $M$ be a surface and consider the following objects: 
\begin{itemize}
\item a open set $U \subset M$;
\item complete Riemannian metrics $\{g_i\}_{i=1}^\infty$, $g_\infty$ on $M$ with $\lim_i g_i = g_\infty$ in $C^\infty_\textnormal{loc}(M)$;
\item a sequence $\{\varepsilon\}_{i=1}^\infty\subset (0,\infty)$ with $\lim_i \varepsilon_i=0$;
\item a sequence $\{u_i\}_{i=1}^\infty \subset C^\infty_\textnormal{loc}(U)$, where $u_{i}$ is a critical point of $E_{\varepsilon_i}$, $\forall\, i$.
\end{itemize}
Suppose
$$\Vert u_i \Vert_{L^\infty(U)} \leq 1 \text{ and } (E_{\varepsilon_i}\mres(U,g_i))(u_i) \leq E_0, \forall\, i$$
Then, after passing to a subsequence, we have:
\begin{itemize}
\item[(a)] $\lim_i u_i = u_\infty$ in $L^1_\textnormal{loc}(U)$, $u_\infty \in BV_\textnormal{loc}(U)$, $u_\infty = \pm 1$ a.e.\ on $U$;
\item[(b)] $\lim_i V_{\varepsilon_i}[u_i] \mres G_1(U)  = V^\infty$ for a stationary integral $1$-varifold $V^\infty \in \mathcal{I}\mathcal{V}_1(U)$;
\item[(c)] $\lim_i(h_0^{-1}E_{\varepsilon_i}\mres (U',g_i))[u_i] = \Vert V^\infty\Vert(U')$ for all $U' \subset\subset U$,
\item[(d)] $\lim_i \{u_i=t\} \cap U' = \spt \Vert V^\infty\Vert \cap U'$ in the Hausdorff topolgy, for all $U' \subset\subset U$ and all $t \in (-1,1)$;
\item[(e)] the density of $V^\infty$ is a.e. odd on $\partial^*\{u_\infty = +1\}\cap U$ and a.e. even on $\spt \Vert V^\infty\Vert \cap U \setminus \partial^* \{u_\infty = +1\}$. 
\end{itemize}
\end{prop}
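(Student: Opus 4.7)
The plan is to follow the Hutchinson-Tonegawa program, proving the five conclusions in sequence; the analytic core is BV compactness of $\{u_i\}$ via a Modica-Mortola argument, together with quantization of the limit energy measure using the one-dimensional heteroclinic profile for $W$. For (a), I would use the classical Modica-Mortola trick: setting $G(t)=\int_0^t \sqrt{2W(s)}\,ds$, the AM-GM inequality yields
$$|\nabla (G\circ u_i)| = \sqrt{2W(u_i)}\,|\nabla u_i| \leq \frac{\varepsilon_i}{2}|\nabla u_i|^2 + \frac{W(u_i)}{\varepsilon_i},$$
so $G\circ u_i$ is uniformly bounded in $BV_{\mathrm{loc}}(U)$. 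BV compactness together with invertibility of $G|_{[-1,1]}$ give, up to a subsequence, $u_i \to u_\infty$ in $L^1_{\mathrm{loc}}(U)$ with $u_\infty \in BV_{\mathrm{loc}}(U)$. The energy bound $\int_U W(u_i)\,d\mathcal{H}^2 \leq \varepsilon_i E_0 \to 0$ then forces $u_\infty \in \{\pm 1\}$ a.e.

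For (b) and (c), the uniform mass bound $\|V_{\varepsilon_i}[u_i]\|(U') \leq h_0^{-1} E_0$ yields a subsequential weak limit $V^\infty \in \mathcal{V}_1(U)$. Stationarity follows from the $\varepsilon$-stress-energy tensor
$$T^{\varepsilon_i}_{jk} = \varepsilon_i\,\partial_j u_i\,\partial_k u_i - \delta_{jk}\Big(\frac{\varepsilon_i}{2}|\nabla u_i|^2 + \frac{W(u_i)}{\varepsilon_i}\Big),$$
which is divergence-free whenever $u_i$ solves \eqref{eq:ACeqt}; contracting against $X \in C^1_c(U,TM)$ and passing to the limit gives $\delta V^\infty(X)=0$. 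The \emph{main obstacle} is rectifiability and integrality of $V^\infty$: rectifiability is derived from a monotonicity-type formula for the energy measure $\mu_i = h_0^{-1}(\varepsilon_i|\nabla u_i|^2/2 + W(u_i)/\varepsilon_i)\mathcal{H}^2$, which passes to the limit and produces an a.e.\ defined positive density for $\|V^\infty\|$ to which Allard/Preiss-type criteria apply; integrality follows from a blow-up analysis at rectifiable points, classifying bounded one-dimensional solutions of Allen-Cahn on $\mathbb{R}$ as finite concatenations of translates of the heteroclinic connection, so that after normalization by $h_0$ the density is a positive integer. Part (c) then combines weak-$*$ convergence of $\mu_i$ with asymptotic equipartition $\varepsilon_i|\nabla u_i|^2/2 - W(u_i)/\varepsilon_i \to 0$ in measure on $U'$ (a consequence of stationarity and monotonicity) and the identity $\|V_{\varepsilon_i}[u_i]\|(U') = h_0^{-1}\int_{U'} \varepsilon_i|\nabla u_i|^2\,d\mathcal{H}^2$.

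For (d), if $x_i \in \{u_i=t\}\cap U'$ with $x_i \to x$, then on any small ball $B_r(x)$ the function $u_i$ must oscillate between $t$ and values near $\pm 1$, producing a uniform lower bound on $\mu_i(B_r(x))$, so $x \in \spt\|V^\infty\|$; conversely, on open sets disjoint from $\spt\|V^\infty\|$, local $W^{1,2}$ estimates force $u_i \to \pm 1$ uniformly on compact subsets, ruling out accumulation of the level sets there. For (e), I blow up $V^\infty$ and $u_\infty$ simultaneously at $x \in \spt\|V^\infty\|$: if $x \in \partial^*\{u_\infty=+1\}$, the blow-up of $u_\infty$ is a single phase transition and the rescaled $u_i$ converges to a one-dimensional profile with an odd number of heteroclinic crossings through $x$; at an interior point of $\{u_\infty=\pm 1\}$, $u_\infty$ is constant across $x$, so heteroclinics must pair into homoclinic bumps, yielding even density.
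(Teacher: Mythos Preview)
The paper does not give its own proof of this proposition: it is stated as a background result with attribution to Hutchinson--Tonegawa \cite{HT}, Tonegawa \cite{T}, and Guaraco \cite[Appendix~B]{G}, and is used as a black box in the rest of the argument. So there is no ``paper's proof'' to compare against.

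That said, your outline is a faithful sketch of the Hutchinson--Tonegawa program as carried out in those references: the Modica--Mortola $BV$ compactness for (a), the stress-energy tensor plus monotonicity and blow-up for stationarity, rectifiability, and integrality in (b)--(c), the oscillation/clearing-out argument for (d), and the parity argument via one-dimensional heteroclinic classification for (e) are all the correct ingredients and in the correct order. The only caveat is that your sketch glosses over the technical work needed to pass from the fixed-metric setting of \cite{HT} to the varying-metrics-converging-in-$C^\infty_{\mathrm{loc}}$ setting stated here (this is what \cite[Appendix~B]{G} handles), and the integrality step---reducing to a one-dimensional problem and classifying bounded entire solutions---requires more care than a single sentence suggests (in particular one needs the discrepancy $\xi_i = \varepsilon_i|\nabla u_i|^2/2 - W(u_i)/\varepsilon_i$ to go to zero in a sufficiently strong sense, not just ``in measure''). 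But as an outline of the cited proofs, your proposal is accurate.
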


In the context of previous result, the set $\Gamma = \spt(V^\infty)$ is called a \emph{limit interface}.
Concerning the multiplicities and the Morse index of limit interfaces we have the following results.

\begin{thm}[Hiesmayr \cite{H}, Gaspar \cite{Ga}]\label{thm:ind.lb}
Let $\{u_{\epsilon_i}\}$ a sequence of solutions to \eqref{eq:ACeqt} with $\lim\epsilon_i = 0$. Suppose the following conditions are true:
\begin{itemize}
\item[(a)]there are positive constants $c_0$ and $E_0$, and a nonnegative integer $p$ such that
$$\limsup_i \sup_M|u_{\varepsilon_i}| \leq 1, \quad \limsup_i E_{\varepsilon_i}(u_{\epsilon_i}) \leq E_0, \quad  \limsup_i \ind_{E_{\varepsilon_i}}(u_{\varepsilon_i}) \leq p;$$
\item[(b)] a limit interface $\Gamma$ is a simple closed geodesic.
\end{itemize}
Then $\ind(\Gamma) \leq p$.
\end{thm}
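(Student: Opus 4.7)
The strategy is the standard Morse index lower semicontinuity argument for phase transitions: proceed by contradiction, assume $\ind(\Gamma) \geq p+1$, and produce, for $i$ sufficiently large, a $(p+1)$-dimensional subspace of $W^{1,2}(M)$ on which $D^2 E_{\varepsilon_i}(u_{\varepsilon_i})$ is negative definite, contradicting $\ind_{E_{\varepsilon_i}}(u_{\varepsilon_i}) \leq p$.

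First, since $\Gamma$ is a simple closed geodesic on the surface $M$, its Jacobi quadratic form for the length functional reads
$$Q_\Gamma(v,w) = \int_\Gamma (v'w' - K_g\, vw)\,ds,$$
and $\ind(\Gamma) \geq p+1$ supplies linearly independent $v_1,\ldots,v_{p+1} \in C^\infty(\Gamma)$ on whose span $Q_\Gamma$ is negative definite. The goal is to lift each $v_j$ to a test function $\varphi_{j,i}$ on $M$ whose Allen-Cahn quadratic form approximates $Q_\Gamma(v_j,v_j)$. The lifting uses Fermi coordinates $(s,t)$ in a fixed tubular neighborhood $\mathcal{U}_r$ of $\Gamma$, where $s$ is arc-length along $\Gamma$ and $t$ is signed distance. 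By Proposition \ref{prop:HT.theory}(d) and the improvement-of-flatness/graphicality results for stable phase transitions near smooth limit interfaces (Tonegawa; Wang--Wei), after passing to a subsequence, $u_{\varepsilon_i}$ is $C^2_{\mathrm{loc}}$-close on $\mathcal{U}_r$ to the one-dimensional heteroclinic $\mathbb{H}(t/\varepsilon_i)$, possibly summed over a number of sheets equal to the multiplicity $m$ of $V^\infty$ along $\Gamma$.

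I would then set
$$\varphi_{j,i}(s,t) = \eta(t)\,v_j(s)\,\partial_t u_{\varepsilon_i}(s,t),$$
with $\eta$ a fixed smooth cutoff supported in $(-r,r)$ and equal to $1$ on $(-r/2,r/2)$, the motivation being that $\partial_t u_{\varepsilon_i}$ is (up to $O(\varepsilon_i)$) the translation kernel of the linearized Allen-Cahn operator. A direct calculation, using the Euler-Lagrange equation \eqref{eq:ACeqt} and integration by parts in Fermi coordinates, then yields
$$\lim_{i\to\infty}\frac{\varepsilon_i\, D^2 E_{\varepsilon_i}(u_{\varepsilon_i})(\varphi_{j,i},\varphi_{k,i})}{c_0} = m\cdot Q_\Gamma(v_j,v_k),$$
where $c_0 = \int_{\re}(\mathbb{H}')^2\,dt > 0$. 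Since $Q_\Gamma$ is negative definite on the span of $\{v_j\}$, the matrix on the left becomes negative definite for $i$ large, giving the contradiction.

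The main obstacle is the Fermi expansion step: establishing the uniform $C^2_{\mathrm{loc}}$ proximity of $u_{\varepsilon_i}$ to a finite sum of shifted heteroclinics at scale $\varepsilon_i$ inside $\mathcal{U}_r$, together with the asymptotic formula above when $m>1$ (which requires a partition of $\mathcal{U}_r$ adapted to the sheets and careful bookkeeping of interaction terms between them). This is a nontrivial blow-up/regularity argument that crucially uses the embeddedness and smoothness of $\Gamma$ to rule out singular behavior, and it is the technical heart of both Hiesmayr's and Gaspar's works. Once it is in hand, the remaining computation is a change of variables, an application of the heteroclinic ODE $\mathbb{H}'' = W'(\mathbb{H})$ to cancel the leading quadratic terms, and a limit along the subsequence.
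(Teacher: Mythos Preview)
The paper does not supply its own proof of this statement; it is quoted as a known result from Hiesmayr \cite{H} and Gaspar \cite{Ga}. So there is no in-paper argument to compare against, but your proposal can be measured against those cited proofs.

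Your overall contradiction strategy and your choice of test function $\varphi_{j,i} = \eta\, v_j\, \partial_t u_{\varepsilon_i} = \langle \nabla u_{\varepsilon_i}, X_j\rangle$ (with $X_j = \eta\, v_j\, \partial_t$) are exactly Hiesmayr's. However, your identification of the ``main obstacle'' reveals a misconception. You claim one must first establish $C^2_{\mathrm{loc}}$-closeness of $u_{\varepsilon_i}$ to a sum of shifted heteroclinics, invoking Wang--Wei type estimates. Those estimates require \emph{stability}, which is not globally available here (only $\ind \leq p$), and in any case neither Hiesmayr nor Gaspar uses them. The whole point of both papers is precisely to \emph{bypass} any fine pointwise structure of $u_{\varepsilon_i}$. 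Hiesmayr carries out an integration-by-parts identity (a Bochner-type rearrangement exploiting $\varepsilon^2\Delta u = W'(u)$) that expresses $D^2E_{\varepsilon}(u)\big(\langle\nabla u,X\rangle,\langle\nabla u,X\rangle\big)$ purely in terms of the measure $\varepsilon|\nabla u|^2\,d\mathcal{H}^2$ and zeroth-order geometric data of $X$; Gaspar instead computes the second \emph{inner} variation $\tfrac{d^2}{dt^2}\big|_{t=0}E_\varepsilon(u\circ\Phi_{-t})$, which coincides with the second variation of the associated varifold along $X$. In both cases the passage to the limit is then just Hutchinson--Tonegawa varifold convergence (Proposition~\ref{prop:HT.theory}), with no blow-up analysis and no sheet decomposition.

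So your sketch has the right endpoints but inserts an intermediate step that is both unnecessary and, under the stated hypotheses, not obviously available.
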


\begin{thm}[Chodosh-Mantoulidis \cite{CM1}]\label{thm:ind.ub}
Let $\{u_{\epsilon_i}\}$ a sequence of solutions to \eqref{eq:ACeqt} with $\lim\epsilon_i = 0$. Suppose the following conditions are true:
\begin{itemize}
\item[(a)]there are positive constants $c_0$ and $E_0$, and a nonnegative integer $p$ such that
$$\limsup_i \sup_M|u_{\epsilon_i}| \leq 1, \quad \limsup_i E_{\epsilon_i}(u_{\epsilon_i}) \leq E_0;$$
\item[(b)] a limit interface $\Gamma$ is a simple closed geodesic and it occurs with multiplicity one.
\end{itemize}
Then for $\epsilon_i$ sufficiently small we have $$\ind(\Gamma) + \nul(\Gamma) \geq \ind_{E_{\varepsilon_i}}(u_{\epsilon_i}) + \nul_{E_{\varepsilon_i}}(u_{\epsilon_i}).$$
\end{thm}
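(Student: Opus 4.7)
The plan is to compare the spectrum of the Allen--Cahn linearized operator $L_{\ep_i} := \ep_i\Delta - \ep_i^{-1}W''(u_{\ep_i})$ with that of the Jacobi operator $J_\Gamma := -\Delta_\Gamma - (|A_\Gamma|^2 + \mathrm{Ric}_g(\nu,\nu))$ of $\Gamma$, and to show that the non-positive eigenspace of the former injects into the non-positive eigenspace of the latter for $i$ large. I would first set up Fermi coordinates $(s,z)$ on a fixed tubular neighbourhood $N_\delta$ of $\Gamma$, with $s$ the arclength parameter and $z$ the signed normal distance. Using the multiplicity-one hypothesis together with Proposition \ref{prop:HT.theory} and standard improvement of convergence (Wang--Wei style curvature estimates, or the Chodosh--Mantoulidis regularity theorem), the solutions take the form
$$u_{\ep_i}(s,z) = \mathbb{H}\!\left(\frac{z - a_i(s)}{\ep_i}\right) + O(\ep_i)$$
in $C^2(N_\delta)$, where $\mathbb{H}$ is the one-dimensional heteroclinic profile and $a_i\in C^2(\Gamma)$ with $\|a_i\|_{C^2}\to 0$. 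Outside $N_\delta$ one has $|u_{\ep_i}|\to 1$ exponentially fast, so the negative part of $W''(u_{\ep_i})$ is supported in a strip of width $\sim\ep_i$.

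Set $k_i := \ind_{E_{\ep_i}}(u_{\ep_i}) + \nul_{E_{\ep_i}}(u_{\ep_i})$ and let $\{v_{i,j}\}_{j=1}^{k_i}$ be an $L^2$-orthonormal basis of the span of eigenfunctions of $L_{\ep_i}$ with non-positive eigenvalues. I would define a projection $\mathcal{Q}_i:v\mapsto\phi$ onto functions on $\Gamma$ by
$$\phi(s) = c_{\ep_i}^{-1}\int_{-\delta}^{\delta} v(s,z)\,\mathbb{H}'\!\left(\frac{z-a_i(s)}{\ep_i}\right)\chi(z)\,dz,$$
with a cutoff $\chi$ and a normalization $c_{\ep_i}$ making $\|\mathcal{Q}_i[\mathbb{H}'(\,\cdot\,/\ep_i)]\|_{L^2(\Gamma)}\sim 1$. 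The key algebraic identity, obtained by rewriting the second variation in Fermi coordinates and using $-\mathbb{H}'' + W''(\mathbb{H})\mathbb{H}' = 0$, is
$$D^2 E_{\ep_i}(u_{\ep_i})\cdot(v,v) = \int_\Gamma\!\Bigl(|\nabla_\Gamma \phi|^2 - \bigl(|A_\Gamma|^2+\mathrm{Ric}_g(\nu,\nu)\bigr)\phi^2\Bigr)\,ds + R_{\ep_i}(v,\phi),$$
where $R_{\ep_i}$ is nonnegative modulo an $o(1)$ error. This uses the fact that the line operator $-\pa_z^2 + W''(\mathbb{H})$ has $\mathbb{H}'$ as its unique null element, with strictly positive spectrum on $\mathbb{H}'^\perp$.

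Two facts then need to be established: (i) the restricted map $\mathcal{Q}_i$ is injective on the non-positive eigenspace of $L_{\ep_i}$ once $i$ is large, because any $v$ in its kernel is, slice-by-slice, $L^2(\re)$-orthogonal to $\mathbb{H}'$ and thus satisfies a quantitative fiberwise Poincar\'e inequality that forces $D^2 E_{\ep_i}(u_{\ep_i})\cdot(v,v) > 0$; and (ii) the image $\{\mathcal{Q}_i v_{i,j}\}$ is, up to an $o(1)$ error in the $L^2(\Gamma)$ inner product, orthonormal, so after extracting a subsequence it converges to a linearly independent family $\{\phi_j\}_{j=1}^{k_i}$ spanning a subspace of $W^{1,2}(\Gamma)$ on which the Jacobi quadratic form is nonpositive. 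Combined, (i) and (ii) force $k_i \leq \ind(\Gamma) + \nul(\Gamma)$, as required.

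The main obstacle is step (i): one has to rule out that negative or zero Allen--Cahn modes concentrate in the $\mathbb{H}'^\perp$ fiber direction. This demands a quantitative spectral gap of the fiberwise operator together with sharp elliptic estimates transferring fiberwise positivity into the full quadratic form, carefully handling the error terms produced by the geodesic curvature of $\Gamma$, the translations $a_i$, and the cutoff $\chi$ near $\pa N_\delta$. The remainder of the argument is essentially bookkeeping in Fermi coordinates combined with the exponential decay of $\mathbb{H}\mp 1$ at $\pm\infty$.
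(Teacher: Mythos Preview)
The paper does not supply its own proof of this statement; it is quoted from Chodosh--Mantoulidis \cite{CM1}. Your outline is essentially the strategy carried out in that reference: use the multiplicity-one regularity theory to put $u_{\ep_i}$ in one-sheet heteroclinic form in Fermi coordinates, project Allen--Cahn variations onto $\Gamma$ by integrating against the fiberwise null mode $\mathbb{H}'$, and then exploit the strict spectral gap of $-\pa_z^2 + W''(\mathbb{H})$ on $(\mathbb{H}')^\perp$ to show that every non-positive Allen--Cahn eigenmode has nontrivial projection, with the projected family nearly diagonalising the Jacobi quadratic form.

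One small correction to your step (ii): you do not need to extract a subsequence and pass to a limit family $\{\phi_j\}$. For each fixed large $i$, the collection $\{\mathcal{Q}_i v_{i,j}\}_{j=1}^{k_i}$ is already, up to $o(1)$, $L^2(\Gamma)$-orthonormal and satisfies $Q_\Gamma(\mathcal{Q}_i v_{i,j},\mathcal{Q}_i v_{i,j}) \leq o(1)$; since the spectrum of $J_\Gamma$ is discrete with only $\ind(\Gamma)+\nul(\Gamma)$ non-positive eigenvalues (counted with multiplicity), elementary linear algebra then forces $k_i \leq \ind(\Gamma)+\nul(\Gamma)$ for $i$ large. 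The limiting formulation you wrote would run into trouble if $k_i$ were not a priori bounded, whereas the direct argument avoids this.
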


%\begin{thm}[Guaraco-Marques-Neves \cite{GMN}]\label{thm:GMN} Let $U \subset M$ be an open set and $\Gamma \subset U$ be a simple closed geodesic which is the limit interface of a sequence of solutions to \eqref{eq:ACeqt} on $U$. If $\Gamma$ is strictly stable, then $\Gamma$ has multiplicity one as a limit interface. \end{thm}

\subsection{Comparison between the min-max theories}\label{subsec:dey}

Recall that the \emph{heteroclinic solution} $\mathbb{H}: \re \to (-1, 1)$, is the unique solution of \eqref{eq:ACeqt} on $\re$ with $\varepsilon=1$, such that
	\begin{equation*}
\mathbb{H}(0) = 0, \quad \lim_{t \to \pm \infty} \mathbb{H}(t) = \pm 1.
\end{equation*}
We denote $h_0 = \Vert\mathbb{H}\Vert_{L^2}^{2}$.
Building on the work of Gaspar and Guaraco \cite{GG,G}, Dey obtained the following result.
\begin{thm}[Dey \cite{Dey}]\label{prop:PT-AP}
The $\varepsilon$-phase transition widths and Almgren--Pitts width are related by
$$ h_0^{-1} \lim_{\varepsilon\to 0}\mathbf{L}_{\varepsilon}(\widetilde\Pi) = \mathbf{L}_{\textnormal{AP}}(\Pi).$$ 
\end{thm}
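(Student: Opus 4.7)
The plan is to establish the equality by proving two separate inequalities via an explicit interconversion between $\mathbf{F}$-continuous cycle sweepouts in the Almgren--Pitts class $\Pi$ and $\mathbb{Z}_2$-equivariant Sobolev-valued maps in $\widetilde{\Pi}$. The equivariance on the double cover $\pi:\widetilde{X}\to X$ is essential, because a mod-$2$ cycle $T \in \mathcal{Z}_1(M;\mathbb{Z}_2)$ admits two Caccioppoli fillings $U$ and $M\setminus U$, which correspond precisely to the two sign choices $\pm\mathbb{H}(d_{\mathrm{sgn}}/\varepsilon)$ of the approximating Allen--Cahn profile built from the signed distance function.

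For the upper bound $h_0^{-1}\limsup_{\varepsilon\to 0}\mathbf{L}_\varepsilon(\widetilde{\Pi}) \leq \mathbf{L}_{\textnormal{AP}}(\Pi)$, given $\delta>0$ I would fix an $\mathbf{F}$-continuous $\Phi\in\Pi$ with $\sup_{x\in X}\mathbf{M}(\Phi(x)) \leq \mathbf{L}_{\textnormal{AP}}(\Pi)+\delta$. For each $\widetilde{x}\in\widetilde{X}$ I would choose the Caccioppoli set $U(\widetilde{x})\in\mathbf{I}_2(M;\mathbb{Z}_2)$ determined by $\widetilde{x}$, so that the two lifts of $x$ give $U$ and $M\setminus U$, and define $h_\varepsilon(\widetilde{x})(p):=\mathbb{H}\bigl(d_{\mathrm{sgn}}(p,\partial U(\widetilde{x}))/\varepsilon\bigr)$ with an appropriate truncation near the cut locus. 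A direct computation shows $E_\varepsilon(h_\varepsilon(\widetilde{x})) = h_0\,\mathbf{M}(\Phi(x))+o(1)$ uniformly as $\varepsilon\to 0$, since the heteroclinic profile $\mathbb{H}$ concentrates energy $h_0$ transverse to $\partial U$. For the opposite inequality I would take a minimizer $h_\varepsilon\in\widetilde{\Pi}$ with $\sup_{\widetilde{x}}E_\varepsilon(h_\varepsilon(\widetilde{x}))\leq\mathbf{L}_\varepsilon(\widetilde{\Pi})+\varepsilon$ and produce a cycle sweepout $\Phi_\varepsilon\in\Pi$ by slicing level sets. Combining the Modica inequality with the coarea formula yields
\[
\int_{-1}^{1} h_0\,\mathbf{M}\bigl(\partial\{h_\varepsilon(\widetilde{x})>t\}\bigr)\,dt \leq E_\varepsilon(h_\varepsilon(\widetilde{x}))+o(1),
\]
so for each $\widetilde{x}$ there is a generic $t_\varepsilon(\widetilde{x})\in(-1+\eta,1-\eta)$ with $\mathbf{M}\bigl(\partial\{h_\varepsilon(\widetilde{x})>t_\varepsilon(\widetilde{x})\}\bigr)\leq h_0^{-1}E_\varepsilon(h_\varepsilon(\widetilde{x}))+o(1)$; the equivariance of $h_\varepsilon$ ensures that this slicing descends to a well-defined map $\Phi_\varepsilon$ on $X$.

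The hard part in both directions is maintaining $\mathbf{F}$-continuity of the constructed family together with preservation of the homotopy class. In the upper bound, the distance-function profile is discontinuous when $\partial U(\widetilde{x})$ changes topology or develops singularities, so one must smooth locally and invoke an interpolation theorem (à la Marques--Neves) to connect nearby cycles by short homotopies with controlled mass, then glue Allen--Cahn approximations along them. In the lower bound, a fixed-level slicing $\{h_\varepsilon=t\}$ is discontinuous in $x$, so $t_\varepsilon(x)$ must be chosen continuously, or averaged over a small band of $t$, while still ensuring $\Phi_\varepsilon$ lies in $\Pi$. Verifying the homotopy-class condition ultimately reduces to comparing the two $\mathbb{Z}_2$-principal bundles $\partial:\mathbf{I}_2(M;\mathbb{Z}_2)\to\mathcal{Z}_1(M;\mathbb{Z}_2)$ and $W^{1,2}(M)\setminus\{0\}\to\bigl(W^{1,2}(M)\setminus\{0\}\bigr)/\mathbb{Z}_2$ via the equivariant map $h_\varepsilon$, and checking that the induced map on fundamental groups matches the one prescribed by $\pi$.
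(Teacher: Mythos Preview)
The paper does not prove this theorem: it is stated as a cited result of Dey \cite{Dey} (building on Gaspar--Guaraco \cite{GG,G}) and used as a black box. So there is no ``paper's own proof'' against which to compare your attempt.

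As for your sketch itself, the broad architecture matches the literature. The inequality $h_0^{-1}\limsup_{\varepsilon\to 0}\mathbf{L}_\varepsilon(\widetilde\Pi)\leq \mathbf{L}_{\textnormal{AP}}(\Pi)$ is essentially Gaspar--Guaraco's construction: one replaces a cycle $\Phi(x)=\partial U$ by the profile $\mathbb{H}_\varepsilon(d_{\mathrm{sgn}}(\cdot,\partial U))$ and estimates energy. Your description of the equivariance bookkeeping via the two Caccioppoli fillings is correct. For the reverse inequality, however, Dey's actual argument is not the direct coarea/level-set slicing you outline. Instead, Dey works at the level of \emph{critical points}: he takes Allen--Cahn min-max critical points $u_{\varepsilon_i}$ with $E_{\varepsilon_i}(u_{\varepsilon_i})=\mathbf{L}_{\varepsilon_i}(\widetilde\Pi)$, passes to the limit varifold $V$ via Hutchinson--Tonegawa, and shows $V\in\mathbf{C}_{\textnormal{AP}}(\Pi)$ (this is exactly the content of Proposition~\ref{thm:CAP.CPT} cited immediately after the theorem). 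The width equality then follows because $\Vert V\Vert(M)=h_0^{-1}\lim\mathbf{L}_{\varepsilon_i}(\widetilde\Pi)$ and $\Vert V\Vert(M)\geq \mathbf{L}_{\textnormal{AP}}(\Pi)$ by definition of the critical set. Your proposed route through a continuously-varying level $t_\varepsilon(x)$ and Modica's inequality is plausible in spirit but faces exactly the continuity and homotopy-class obstructions you flag, and these are genuinely hard to resolve directly; Dey's approach via critical varifolds sidesteps them.
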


One has an even stronger conclusion. Consider min-max critical points $u_i$ of $E_{\varepsilon_i}$, where $\varepsilon_i \to 0$. In the following, $\mathbf{C}_{\textnormal{PT}}(\widetilde \Pi)$ denotes the set of all limiting stationary integral 1-varifolds arising from $u_i$, while $\mathbf{C}_{\textnormal{AP}}(\Pi)$ denotes the set of all Almgren--Pitts min-max critical points (see Section \ref{subsec:cont-AP}). 
\begin{prop}[Dey {\cite[Theorem 1.4]{Dey}}]\label{thm:CAP.CPT}
$\mathbf{C}_{\textnormal{PT}}(\widetilde \Pi) \subset \mathbf{C}_{\textnormal{AP}}(\Pi)$.
\end{prop}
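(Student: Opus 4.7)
The plan is to convert an Allen--Cahn min-max sequence into an Almgren--Pitts minimizing sequence in $\Pi$ whose critical set contains $V^\infty$. Unravel the definitions first: fix $V^\infty \in \mathbf{C}_{\textnormal{PT}}(\widetilde\Pi)$, produced by some sequence $\varepsilon_i \to 0$ and min-max critical points $u_i$ of $E_{\varepsilon_i}$ (Proposition~\ref{prop:AC.min.max}) with $h_0^{-1} V_{\varepsilon_i}[u_i] \to V^\infty$ by Proposition~\ref{prop:HT.theory}. Each $u_i$ is a $W^{1,2}$-limit of values of a minimizing sequence $\{h_i^{(j)}\}_j \subset \widetilde\Pi$ for $\mathbf{L}_{\varepsilon_i}(\widetilde\Pi)$, so a diagonal extraction furnishes a single sequence $h_i \in \widetilde\Pi$ and points $\tilde x_i \in \widetilde X$ with $\sup_{\tilde x} E_{\varepsilon_i}(h_i(\tilde x)) \leq \mathbf{L}_{\varepsilon_i}(\widetilde\Pi) + o(1)$ and $h_i(\tilde x_i) \to u_i$ strongly in $W^{1,2}(M)$.

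The central step is to construct a $\mathbb{Z}_2$-invariant interpolation $\Xi : W^{1,2}(M) \setminus \{0\} \to \mathcal{Z}_1(M; \mathbf{F}; \mathbb{Z}_2)$ by sending $u$ to a suitably smoothed average of the mod-$2$ level cycles $\partial^* \{u > t\}$ for $t$ in a small window around $0$. The Modica--Mortola inequality $\tfrac{\varepsilon}{2}|\nabla u|^2 + \tfrac{1}{\varepsilon}W(u) \geq |\nabla u|\sqrt{2W(u)}$, the coarea formula, and the identity $\int_{-1}^{1}\sqrt{2W(t)}\,dt = h_0$ combine to yield the mass bound
\[ \mathbf{M}(\Xi(u)) \leq h_0^{-1} E_\varepsilon(u) + o_\varepsilon(1) \]
along energy-bounded families. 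Because $\Xi(-u) = \Xi(u)$ (as mod-$2$ cycles) and $h_i$ is $\mathbb{Z}_2$-equivariant, the composition $\Xi \circ h_i$ descends to an $\mathbf{F}$-continuous map $\Phi_i : X \to \mathcal{Z}_1(M; \mathbf{F}; \mathbb{Z}_2)$; matching the $\mathbb{Z}_2$-principal bundle structures of $\pi$ and $\partial : \mathbf{I}_2(M;\mathbb{Z}_2) \to \mathcal{Z}_1(M;\mathbb{Z}_2)$ then shows $\Phi_i \in \Pi$.

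Combining the mass bound with Theorem~\ref{prop:PT-AP} gives $\limsup_i \sup_{x \in X}\mathbf{M}(\Phi_i(x)) \leq h_0^{-1}\limsup_i \mathbf{L}_{\varepsilon_i}(\widetilde\Pi) = \mathbf{L}_{\textnormal{AP}}(\Pi)$, and the reverse inequality is automatic from $\Phi_i \in \Pi$, so $\{\Phi_i\}$ is a minimizing sequence in $\Pi$. At the distinguished parameters $x_i = \pi(\tilde x_i)$, the slicing built into $\Xi$ together with Proposition~\ref{prop:HT.theory}(b)--(d) forces $|\Phi_i(x_i)| \to V^\infty$ in the $\mathbf{F}$-topology of varifolds with mass tending to $\|V^\infty\|(M) = \mathbf{L}_{\textnormal{AP}}(\Pi)$. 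Hence $V^\infty \in \mathbf{C}(\{\Phi_i\})$, and Proposition~\ref{prop:pull.tight} preserves it while making every element of the critical set stationary, giving $V^\infty \in \mathbf{C}_{\textnormal{AP}}(\Pi)$.

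The main obstacle will be the construction of $\Xi$: the naive threshold map $u \mapsto \partial\{u > 0\}$ is neither $\mathbf{F}$-continuous nor does it recover the sharp constant $h_0$ at a fixed level, and its mass oscillates wildly as the threshold is varied. A careful averaging procedure over a window of level values is needed to simultaneously secure $\mathbf{F}$-continuity, the asymptotically sharp mass bound, and compatibility with the Hutchinson--Tonegawa varifold limit at the $\tilde x_i$. This interpolation is the technical heart of Dey's theorem.
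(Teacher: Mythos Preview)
The paper does not supply its own proof of this proposition: it is quoted directly from Dey \cite[Theorem~1.4]{Dey} and used as a black box, so there is no in-paper argument to compare your proposal against.

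On the substance of your sketch: the overall architecture---turn a phase-transition minimizing sequence into an Almgren--Pitts minimizing sequence via a level-set map, control mass through Modica--Mortola and coarea, and track the varifold limit---is indeed the strategy Dey follows. However, the step you yourself flag as the ``main obstacle'' is not merely a technicality but a genuine gap in the proposal as written. The phrase ``suitably smoothed average of the mod-$2$ level cycles $\partial^*\{u>t\}$'' does not make literal sense: $\mathcal{Z}_1(M;\mathbb{Z}_2)$ is not a real vector space, so one cannot average cycles over a window of thresholds, and no single choice of threshold $t=t(u)$ depending continuously on $u\in W^{1,2}$ will yield an $\mathbf{F}$-continuous map with the required mass bound. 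Dey's actual construction does not produce a continuous $\Xi$ on all of $W^{1,2}(M)\setminus\{0\}$; rather, he works with discrete sweepouts (fine cubical subdivisions of $\widetilde X$), selects good level values vertex-by-vertex via Sard-type and coarea arguments, and then invokes the Almgren--Pitts discrete-to-continuous interpolation machinery to produce $\Phi_i\in\Pi$. Your proposal would need to replace the averaging heuristic with this discrete selection-plus-interpolation scheme to become a proof.
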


\begin{rem}
The definitions and the results of this section can be extended to Riemannian manifolds of dimensions $3 \leq n+1 \leq 7$, where we replace geodesics by embedded minimal hypersurfaces.
\end{rem}

\section{The Multiplicity one result}\label{sec:3}

\subsection{Main results}

\begin{thm}\label{thm:mult.one}
Let $(M,g)$ be a closed oriented Riemannian surface of negative sectional curvature. For each $i \in \mathbb{N}$, consider $u_i \in C^\infty(M; [-1,1])$ and $\varepsilon_i > 0$, such that each $u_i$ is a critical point of $E_{\varepsilon_i}$ and
	\begin{equation}
		E_{\varepsilon_i}[u_i] \leq E_0, \quad \ind_{E_{\varepsilon_i}}(u_i) \leq 1 \leq \ind_{E_{\varepsilon_i}}(u_{\varepsilon_i}) + \nul_{E_{\varepsilon_i}}(u_{\varepsilon_i}), \quad \forall\,i \in \mathbb{N}.
	\end{equation}
Suppose $\lim \varepsilon_i = 0$. Passing to a subsequence, write $V = \lim_i h_{0}^{-1} V_{\varepsilon_i}[u_i]$ for the limit $1$-varifold.
Then, there exist a primitive figure eight geodesic $\sigma_{\infty}$ and a possibly empty collection of primitive simple closed geodesics $\sigma_{1},\dots,\sigma_{N}$, which are disjoint from $\sigma_{\infty}$ and pairwise disjoint, such that
\begin{equation*}
V =  \mathbf{v}(\sigma_{\infty},\mathbf{1}_{\sigma_{\infty}}) + \sum_{j=1}^{N} \mathbf{v}(\sigma_{j},\mathbf{1}_{\sigma_{j}}).
\end{equation*}  
Moreover, $\spt V$ separates $M$.
\end{thm}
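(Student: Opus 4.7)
My plan is a compactness-and-classification argument. Apply Proposition \ref{prop:HT.theory} with $U = M$: after passing to a subsequence, $u_i \to u_\infty \in BV(M)$ with $u_\infty = \pm 1$ a.e., and $V = \lim_i h_0^{-1} V_{\varepsilon_i}[u_i]$ is a stationary integral $1$-varifold. On a surface, $\spt V$ is therefore a finite geodesic network; at each junction, stationarity forces the density-weighted sum of outward unit tangent vectors to vanish. The hypothesis $\ind_{E_{\varepsilon_i}}(u_i) + \nul_{E_{\varepsilon_i}}(u_i) \geq 1$ prevents $u_i$ from converging to a constant, so both $\{u_\infty = \pm 1\}$ are nonempty and $\spt V$ separates $M$. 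Each connected component of $\spt V$ is thus an embedded closed geodesic with some integer multiplicity, an immersed closed geodesic with transverse self-intersections, or a genuine network with non-antipodal junctions.

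\textbf{Step 2 (classification).} Because $K_g < 0$, the Jacobi operator $-\partial_s^2 - K_g$ along any simple closed geodesic $\sigma$ is strictly positive, so every simple closed geodesic on $M$ is strictly stable with $\ind(\sigma) + \nul(\sigma) = 0$. Using the bound $\ind_{E_{\varepsilon_i}}(u_i) \leq 1$ I then rule out three phenomena: (a) a simple closed geodesic component of multiplicity $m \geq 2$ would produce $m-1$ independent negative second-variation directions for $u_i$ via the adjacent-layer interaction modes of Chodosh--Mantoulidis \cite{CM1}, where strict stability of $\sigma$---and hence $K_g < 0$---is essential, recalling that multiplicity one fails on general surfaces \cite{LPW}; (b) a junction whose tangent vectors are not arranged in antipodal pairs, for instance a Y-junction, carries a sharply localized negative mode for $u_i$; (c) each additional transverse self-intersection of an immersed geodesic component contributes another independent negative direction through a node-smoothing deformation that exploits negative curvature to decrease length to second order. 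Consequently the only admissible components are multiplicity-one simple closed geodesics and at most one multiplicity-one figure eight.

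\textbf{Step 3 (existence of $\sigma_\infty$ and disjointness).} If no figure-eight component appeared, each component would be a strictly stable multiplicity-one simple closed geodesic; Theorem \ref{thm:ind.ub} applied componentwise then forces $\ind_{E_{\varepsilon_i}}(u_i) + \nul_{E_{\varepsilon_i}}(u_i) \to 0$, contradicting the standing assumption. Hence a primitive figure-eight $\sigma_\infty$ is present and
\[
V = \mathbf{v}(\sigma_\infty, \mathbf{1}_{\sigma_\infty}) + \sum_{j=1}^{N} \mathbf{v}(\sigma_j, \mathbf{1}_{\sigma_j})
\]
with each $\sigma_j$ a primitive simple closed geodesic. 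Pairwise disjointness of the $\sigma_j$ and their disjointness from $\sigma_\infty$ follows because an intersection between two distinct components would create a vertex of $\spt V$ of a type already excluded in Step 2.

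\textbf{Main obstacle.} The crux is Step 2(a): a strictly stable simple closed geodesic cannot appear with multiplicity $\geq 2$ as an Allen-Cahn limit interface under the index bound. Since the analogous statement fails on surfaces without curvature assumptions \cite{LPW}, the proof must exploit $K_g < 0$ quantitatively---presumably through Toda-type interaction modes between adjacent layers whose second variation is driven negative precisely by strict stability. Steps 2(b)--(c) also demand carefully localized Allen-Cahn deformations near junctions and self-intersections and form the secondary technical bottleneck.
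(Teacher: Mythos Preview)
Your overall architecture is right, and Step 3 (deducing the figure eight from $\ind+\nul\ge 1$ via Theorems \ref{thm:ind.lb} and \ref{thm:ind.ub}) matches the paper exactly. But two substantial gaps remain.

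\textbf{The figure-eight multiplicity case is missing.} In Step 2 you only argue multiplicity one for \emph{simple} components and then assert ``at most one multiplicity-one figure eight'' without justification. This is the paper's main technical contribution (Case (b) of the proof). A figure eight has no embedded tubular neighborhood, so one cannot write the level sets $\{u_i=0\}$ as global graphs; the paper breaks $\Sigma\setminus B_r(p_\ast)$ into six arcs, introduces Fermi coordinates on each, applies the Wang--Wei sheet estimates \cite{WW2} piecewise, and then patches the resulting height functions through the parametrization $\sigma:\mathbb{S}^1\to M$. The normalized sheet separation $\widehat h_i$ limits to a bounded weak solution of $\widehat h''+(K_g\circ\sigma)\widehat h=0$ on $\mathbb{S}^1\setminus\sigma^{-1}(p_\ast)$; the $L^\infty$ bound across the self-intersection (Lemma \ref{lem.Linfty}) uses a barrier built from White's foliation (Proposition \ref{prop:foliation}) and the \cite{CM1} gluing solutions. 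One then gets $\int_{\mathbb S^1}[(\widehat h')^2-(K_g\circ\sigma)\widehat h^2]=0$, contradicting $K_g<0$. None of this is covered by your sketch.

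\textbf{The mechanism in Step 2(a) is different and only heuristic.} The paper does \emph{not} argue that multiplicity $m$ produces $m-1$ negative interaction modes for $u_i$. For simple components it either invokes \cite{GMN} directly (strictly stable embedded limit $\Rightarrow$ multiplicity one) or runs the same Jacobi-field-from-sheet-separation argument as above: if $Q\ge 2$, the normalized top-minus-bottom height $\widehat h_i$ converges (via Harnack and \eqref{eq:mean_curv_pde2}) to a nontrivial Jacobi field on $\Sigma$, which cannot exist since $K_g<0$. Your Toda-interaction claim is plausible in spirit but \cite{CM1} does not supply that statement off the shelf, and in any case it would still leave the figure eight untreated.

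Two smaller points: the structural fact that $\spt V$ has at most one (transverse) self-intersection is quoted from Mantoulidis \cite{Man} rather than reproved via your Steps 2(b)--(c); and your separation argument in Step~1 is premature, since even-multiplicity components need not bound---the paper deduces separation only \emph{after} establishing multiplicity one.
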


\begin{proof}
It follows from the work of Mantoulidis \cite{Man} that $\spt(V)$ is a union of primitive closed geodesics such that the number of self-intersection points of the union of these curves is at most one. Moreover the intersection (if occurs) is transverse, so that the respective geodesic is a figure eight curve. Thus we have to prove that each component has multiplicity one and that there is a figure eight on $\spt(V)$. The multiplicity one property implies that $\spt V$ separates $M$.

Assume for a moment that we already know that each simple geodesic on $\spt(V)$ is realized with multiplicity one and let us use that to prove $\spt(V)$ contains a figure eight. Suppose  $\spt(V)$ consists of simple closed geodesics, all attained with multiplicity one. Since
$$\ind_{E_{\varepsilon_i}}(u_{\varepsilon_i}) \leq 1 \leq \ind_{E_{\varepsilon_i}}(u_{\varepsilon_i}) + \nul_{E_{\varepsilon_i}}(u_{\varepsilon_i}),$$
by Theorems \ref{thm:ind.lb} and \ref{thm:ind.ub} we have
$$\ind\big(\spt(V)\big) \leq 1 \leq \ind\big(\spt(V)\big) + \nul\big(\spt(V)\big).$$
However, since the metric has negative curvature, all closed geodesics are strictly stable, which contradicts the previous equation.

Let us now move to the proof of the multiplicity one result. Let $\Sigma$ be one of the components of $\spt(V)$. Then $\Sigma$ is strictly stable and exactly one of the following possibilies occurs: 
\begin{itemize}
\item[(a)] $\Sigma$ is simple;
\item[(b)] $\Sigma$ is a figure eight.
\end{itemize}
In the first case, since $\si$ is embedded and strictly stable, the multiplicity one property follows from the main result in \cite{GMN}. However, \cite{GMN} does not apply if $\si$ is a figure eight. To handle (b) we then adapt the ideas in the proof of Theorem 4.1 in \cite{CM2}. Actually, this approach also applies to (a), so we will deal with this case first, since it will make the argument in case (b) clearer.\\

\noindent
{\bf Case (a):} Given $p \in M$, define define the index concentration scale by
$$\mathcal{R}_i(p) = \inf\big\{r > 0;\, \ind\big(u_{\varepsilon_i};B_{r}(p_{\ast})\big)= 1\big\},$$
and denote
$$\mathring{\si} := \big\{p \in M;\, \liminf_{i \to \infty}\mathcal{R}_i(p) > 0 \big\}.$$
Passing to a subsequence if necessary, it follows from \cite[Lemma 4.17]{Man} that $\mathcal{H}^{0}(\si\setminus\mathring{\si}) \leq 1$.\\

\noindent
{\it Case (a.1): $\si$ is simple and $\mathring{\si} = \si$.}\\

Let $\sigma: \mathbb{S}^1 \to M$ be a primitive unit-speed parametrization of $\si$ and let $n = J\sigma^{\prime}$ be the unit normal determined by $\sigma$.
Since $\Sigma$ is simple and compact there is $\delta > 0$ such that in a tubular neighborhood $U$ of $\Sigma$ we can introduce Fermi coordinates via a map $\Phi: \mathbb{S}^{1}\times(-\delta,\delta) \to M$, $\Phi(q,t) = \exp_{\sigma(\theta)}\big(tn(\theta)\big)$, and the metric $g$ can be written as
\begin{equation}\label{eq:met.Fermi}
g = v(\theta,t)^{2}d\theta^2 + dt^2,
\end{equation}
where $\theta$ is the coordinate determined by a parametrization of $\si$. Moreover, we can suppose the other components of $\spt V$ does not intersect $U$.

Since $\mathring{\si} = \si$, passing to a subsequence if necessary, $u_{\varepsilon_i}$ is stable. Let $\beta \in (0,1)$. As proved in \cite[Lemma 4.11]{Man}, there is $\varepsilon_0 > 0$ such that if $\varepsilon_i \leq \varepsilon_0$, then
\begin{equation*}
\varepsilon |\nabla u_{\varepsilon_i}(p)| \geq \frac{1}{4}\min_{|s| < 1 - \beta/2} W(s) > 0,
\end{equation*}
for all $p \in \{|u_{\varepsilon_i}| < 1 - \beta\}$, and the geodesic curvature of $\{u_{\varepsilon_i} = 0\}$ is uniformly bounded by $\Lambda$. Thus, it follows from the main result in \cite{WW2} (see also \cite{WW1} and \cite[Appendix C]{Man}) that for any $\alpha \in (0,1)$ there is $\varepsilon_{\ast} = \varepsilon_{\ast}(\alpha,\beta,\Lambda) \leq \varepsilon_0$ and a constant $C = C(\alpha,\beta,\Lambda) > 0$ such that for $\varepsilon_i < \varepsilon_{\ast}$ the level sets $\{u_{\varepsilon_i} = 0\}$ are smooth simple closed curves and their geodesic curvature $H(u_{\varepsilon_i})$ satisfy
\begin{equation}\label{eq:mean.curv.bound}
|H(u_{\varepsilon_i})| \leq C\varepsilon_i\big(\log\vert\log\varepsilon_i\vert\big)^{2}.
\end{equation}

As a consequence, $\{u_{\varepsilon_i} = 0\}$ consists of a finite number of components $\Gamma_{i,1},\ldots,\Gamma_{i,Q}$, with $Q$ uniformly bounded independent of $i$, such that $\Gamma_{i,\ell}$ is the graph of a $C^{\infty}$ function $f_{i,\ell}: \si \to \re$, $\forall\, \ell \in \{1,\ldots,Q\}$, meaning 
$$\Gamma_{i,\ell} = \big\{\exp_{\sigma(\theta)}\big(f_{i,\ell}(\sigma(\theta))n(\theta)\big);\, \theta \in \mathbb{S}^{1}\big\}.$$
Let $q \in \Gamma_{i,\ell}$ and for $j\neq \ell$ denote by $d_{i,j}(q)$ the signed distance from $q$ to $\Gamma_{i,j}$. Define
$$D_{i,\ell}(q) = \min_{j\neq\ell}\vert d_{i,j}(q)\vert.$$
In \cite[Proposition 10.1]{WW2} the following strong distance sheets estimates are also proved
\begin{equation}\label{eq:sheets}
\lim_{\varepsilon \to 0}\frac{\exp(-\varepsilon^{-1} D_{i,\ell})}{\varepsilon\big(\log\vert\log\varepsilon\vert\big)^{2}} = 0,\quad \ell = 1,\ldots,Q.
\end{equation}
Hence after possibly reindexing we can write 
$$f_{i,1} < \ldots < f_{i,Q}.$$

By the Constancy Theorem \cite[Theorem 41.1]{Sim},  $\Theta^1(V\mres U,\cdot)$ is constant on $\si$. So, if $\Theta^1(V\mres U,p) = 1$ for some $p \in \si$, then $\si$ is realized with multiplicity one. Suppose $\Theta^1(V\mres U,\cdot) > 1$. Then $Q \geq 2$ for a subsequence of $\{\varepsilon_i\}_{i\in \N}$. It suffices to consider the case $Q = 2$, otherwise in the following argument one takes the botton and top sheets and ignore all the intermediate ones. 

Define $f_i = f_{i,2} - f_{i,1}$ and denote by $H_{\Gamma_{i,\ell}}$ the geodesic curvature of the graph of $f_{i,\ell}$. It is proved in \cite{CM2} that 
\begin{equation} \label{eq:mean_curv_pde}
H_{\Gamma_{i,2}} - H_{\Gamma_{i,1}} = - A\,\mathrm{div}_\si(\widehat{\mathbf{B}} \nabla_\si f_i + f_i \widehat{\mathbf{C}}) + \langle \mathbf{\widehat{D}}, \nabla_\si f_i \rangle_{\si} + \widehat{E}f_i\quad \text{ on } \si,
\end{equation}
with coefficients $A = A_p : \re^2 \to \re$ whose argument is $(f_{i,1},f_{i,2})$ and
\begin{align*} 
\widehat{\mathbf{B}} & = \widehat{\mathbf{B}}_p : \re^2 \times (T_p \si)^2 \to \re, \\
\widehat{\mathbf{C}} & = \widehat{\mathbf{C}}_p, \widehat{\mathbf{D}} = \widehat{\mathbf{D}}_p : \re^2 \times (T_p \si)^2 \to T_p \si, \\
\widehat{E} & = \widehat{E}_p : \re^2 \times (T_p \si)^2 \to \re, 
\end{align*}
whose arguments are $(f_{i,1}, f_{i,2}, \nabla_\si f_{i,1}, \nabla_\si f_{i,2}) \in \re^2 \times (T_p \si)^2$. These coefficients are \emph{uniformly bounded} and satisfy
$$ A \geq \kappa, \quad \langle \widehat{\mathbf{B}}v,v\rangle_\si \geq \kappa \Vert v \Vert_{\si}^{2}, \quad v \in T_p \si, $$
for a fixed $\kappa > 0$, provided
$$ \limsup_{i \to \infty} \big[\Vert f_{i,1} \Vert_{C^1(\si)} + \Vert f_{i,2} \Vert_{C^1(\si)}\big] < \infty. $$

Denote $h_{i,\ell} = f_{i,\ell}\circ\sigma$ and $H_{i,\ell} = H_{\Gamma_{i,\ell}}\circ\sigma$. We conclude $h$ satisfy an equation of the form
\begin{equation} \label{eq:mean_curv_pde2}
H_{i,2} - H_{i,1} = - A(\widehat{B} h_{i}^{\prime} + \widehat{C} h_i )^{\prime} + \widehat{D}h_{i}^{\prime} + \widehat{E}h_i,\ \text{ on } S_{2r},
\end{equation}
with coefficients $A_p : \re^2 \to \re$ whose argument is $(h_{i,1},h_{i,2})$ and
\begin{align*} 
\widehat{B}_p, \widehat{C}_p, \widehat{D}_p, \widehat{E}_p : \re^4 \to \re,
\end{align*}
whose arguments are $(h_{i,1},h_{i,2},h_{i,1}^{\prime},h_{i,2}^{\prime}) \in \re^4$. These coefficients are uniformly bounded and satisfy
$$ A \geq \kappa, \quad \widehat{B} \geq \kappa,$$
for a fixed $\kappa > 0$, provided
\begin{equation}\label{eq:hC1}
\limsup_{i \to \infty}\big[\Vert h_{i,1} \Vert_{C^1} + \Vert h_{i,2} \Vert_{C^1}\big] < \infty.
\end{equation}

Then, it follows from the Harnack inequality that there is a constant $c > 0$, independent of $i$, such that, 
$$\sup_{\mathbb{S}^1} h_i \leq c \inf_{\mathbb{S}^1} h_i,\ \forall\, i \in \N.$$
Moreover, \eqref{eq:mean.curv.bound} and \eqref{eq:sheets} imply 
$$\lim_{i \to \infty}\frac{\vert\vert H_{i,\ell}\vert\vert_{C^{0}(\mathbb{S}^1)}}{\varepsilon\vert\log\varepsilon\vert} = 0, \quad \liminf_{i \to \infty}\frac{\inf_{\mathbb{S}^1}h_i}{\varepsilon\vert\log\varepsilon\vert} > 0.$$

Then we can divide \eqref{eq:mean_curv_pde} by $\sup_{\si} h_i$ to obtain 
\begin{align}\label{eq:equationh}
&\int_{\mathbb{S}^1}\frac{H_{i,2} - H_{i,1}}{A\cdot\sup \widehat{h}_i}\,\psi\,d\theta = \int_{\mathbb{S}^1}\big(\widehat{B}\,\widehat{h}_{i}^{\,\prime}\psi^{\prime} + \widehat{C}\, \widehat{h}_{i}\psi^{\prime} + A^{-1}\widehat{D}\,\widehat{h}_{i}^{\,\prime}\psi + A^{-1}\widehat{E}\,\widehat{h}_{i}\psi\big)d\theta,	
\end{align}
where $\widehat{h}_i = \frac{h_i}{\sup h_i}$ and $\psi \in C^{\infty}(\mathbb{S}^1)$. Choosing $\psi = \widehat{h}_i$ we conclude
$$\int_{\mathbb{S}^1}\vert \widehat{h}_{i}^{\,\prime}\vert^{2}d\theta < \infty.$$ 
Since $\widehat{h}_i$ is bounded it follows that
$$\widehat{h}_i \rightharpoonup \widehat{h} \ \text{in}\  W^{1,2}(\mathbb{S}^1)\quad \text{and}\quad \widehat{h}_i \rightharpoonup \widehat{h}\ \text{in}\  L^{2}(\mathbb{S}^1).$$

Passing to the limit in \eqref{eq:equationh}, we conclude $\widehat{h}$ is a weak solution of 
$$\varphi^{\prime\prime} + (K_{g}\circ\sigma)\varphi = 0,$$
and then it is smooth, by elliptic regularity. Moreover, 
$$\frac{1}{c} \leq \widehat{h} \leq 1.$$
Thus, $\widehat{h}$ is a non-trivial jacobi field of $\si$. However, this is a contradiction, since $\si$ is strictly stable. Therefore $\Theta^{1}(V\mres U,\cdot) = 1$.\\

\noindent
{\it Case (a.2): $\si$ is simple and $\si\setminus\mathring{\si} = \{p\}$.}\\

The convergence of $\{u_{\varepsilon_i}=0\}\cap U$ to $\si\setminus\{p\}$ is graphical $C^{2,\alpha}_{loc}$ on $\si\setminus\{p\}$. Consider $0 < \rho < \inj(M,g)$. For all $r \in (0,\rho/4)$ there exists a subsequence along which
\begin{equation*}
\ind\big(u_{\varepsilon_i};M\backslash B_{r}(p_{\ast})\big) = 0,\ \forall\, i.
\end{equation*}
Denote $D_{R} := \mathbb{S}^{1}\setminus\sigma^{-1}\big(\si\setminus\overline{B}_{R}(p)\big)$. The reasoning of the previous case still applies and yields functions $h_{i,1},h_{i,2}: D_{2r} \to \re$ associated to the incomplete properly embedded surfaces (with boundary) 
$$\Gamma_{i,1},\Gamma_{i,2} \subset \big(\{u_{\varepsilon_i}=0\}\cap U\big) \setminus \big\{\exp_{\sigma(\theta)}\big(tn(\theta)\big);\, \theta \in \mathbb{S}^1, t \in (-\delta,\delta)\big\}.$$

As before, we conclude that there is a constant $c > 0$ depending on $r$ but independent of $i$, such that, 
$$\sup_{D_{2r}} h_i \leq c \inf_{D_{2r}} h_i,\ \forall\, i \in \N,$$
and also
\begin{equation}\label{eq:bounds.h}
\lim_{i \to \infty}\frac{\vert\vert H_{i,\ell}\vert\vert_{C^{0}(D_{2r})}}{\varepsilon\vert\log\varepsilon\vert} = 0, \quad \liminf_{i \to \infty}\frac{\inf_{D_{2r}}h_i}{\varepsilon\vert\log\varepsilon\vert} > 0.
\end{equation}

Let $\psi: \mathbb{S}^1 \to \re$ be a smooth function whose support is compact and it is contained in $D_{3r}$. Then, by \eqref{eq:mean_curv_pde2} we have
\begin{align*} 
&\int_{D_{3r}}\frac{H_{i,2} - H_{i,1}}{A\cdot\sup \widehat{h}_i}\,\psi\,d\theta = \int_{D_{3r}}\big(\widehat{B}\,\widehat{h}_{i}^{\,\prime}\psi^{\prime} + \widehat{C}\, \widehat{h}_{i}\psi^{\prime} + A^{-1}\widehat{D}\,\widehat{h}_{i}^{\,\prime}\psi + A^{-1}\widehat{E}\,\widehat{h}_{i}\psi\big)d\theta.	
\end{align*}
Consider $\eta: \mathbb{S}^1 \to \re$, such that $0 \leq \eta \leq 1$, $\eta\vert_{D_{4r}} \equiv 1$ and $\vert\eta^{\prime}\vert \leq \frac{C}{r}$, for some constant $C > 0$. Since the functions $\widehat{B},\,\widehat{C},\,\widehat{D},\,\widehat{E}$ are uniformly bounded with $\widehat{B} > \kappa >0$ and it holds \eqref{eq:hC1}, applying $\psi = \eta \widehat{h}_i$ to the equality above we obtain
$$\limsup_{i \to \infty}\int_{D_{4r}}(\widehat{h}_{i}^{\,\prime})^{2}d\theta < \infty.$$

Moreover, since $\Gamma_{i,\ell}$ converge in $C^{2,\alpha}$ to $\si\backslash B_{r}(p)$, the coefficients of \eqref{eq:mean_curv_pde2} satisfy
\begin{align*}
\limsup_{r \to 0}\Big[\limsup_{i \to \infty} &\big(\vert\vert A\vert\vert_{C^0(D_{3r})} + \vert\vert \widehat{B}\vert\vert_{C^0(D_{3r})} + \vert\vert \widehat{C}\vert\vert_{C^0(D_{3r})}\\
&\vert\vert \widehat{D}\vert\vert_{C^0(D_{3r})} + \vert\vert \widehat{E}\vert\vert_{C^0(D_{3r})}\big)\Big] < \infty.
\end{align*}
Hence
\begin{equation}\label{eq:h.der.L2}
\limsup_{r \to 0}\left[\limsup_{i \to \infty}\int_{D_{4r}}(\widehat{h}_{i}^{\,\prime})^{2}d\theta\right] < \infty.
\end{equation}
Therefore,
$$\widehat{h}_{i} \rightharpoonup \widehat{h}\ \text{in}\ W^{1,2}_{\mathrm{loc}}(\mathbb{S}^{1}\backslash\sigma^{-1}(p)), \quad \widehat{h}_{i} \to \widehat{h}\ \text{in}\ L^{2}_{\mathrm{loc}}(\mathbb{S}^{1}\backslash\sigma^{-1}(p)).$$

We conclude $\widehat{h}$ is a weak solution on $\mathbb{S}^{1}\backslash\sigma^{-1}(p)$ of the equation
\begin{equation}\label{eq:Jacobi.field}
\widehat{h}^{\,\prime\prime} + (K_g\circ\sigma) \widehat{h} = 0.
\end{equation}
By elliptic regularity, $\widehat{h}$ is smooth and
solves \eqref{eq:Jacobi.field} classically on $\mathbb{S}^{1}\backslash\sigma^{-1}(p)$.
We are going to prove on Lemma \ref{lem.Linfty} the following property: 
\begin{equation}\label{eq:h.bounded}
\widehat{h} \in L^{\infty}(\mathbb{S}^1).
\end{equation}
	
It follows from \eqref{eq:h.der.L2}, \eqref{eq:Jacobi.field} and \eqref{eq:h.bounded} that $\widehat{h}$ satisfies
$$\int_{\mathbb{S}^1}\big[(\widehat{h}^{\,\prime})^2 - (K_g\circ\sigma) \widehat{h}^2\big]d\theta = 0.$$
However, this contradicts the fact $K_g < 0$. Therefore $\Theta^{1}(V\mres U,\cdot) = 1$.\\

\noindent
{\bf Case (b)}: Let $p_{\ast}$ be the point of self-intersection of $\Sigma$. Observe
$$\mathrm{\sing} \spt \vert\vert V\vert\vert = \{p_{\ast}\}.$$ 
Define
$$\mathcal{R}_i = \inf\big\{r > 0;\, \ind\big(u_{\varepsilon_i};B_{r}(p_{\ast})\big)= 1\big\}.$$
We claim 
\begin{equation}\label{eq:lim_Ri}
\liminf_{i \to \infty} \mathcal{R}_i = 0.
\end{equation}
Suppose $\liminf_{i \to \infty} \mathcal{R}_i = 2r_{\ast} > 0$. Then, passing to a subsequence if necessary, we have $\ind\big(u_{\varepsilon_i};B_{r_{\ast}}(p_{\ast})\big)= 0$, $\forall\, i$. However this contradicts \cite[Theorem 4.12]{Man}, since $\mathrm{\sing} \spt \vert\vert V\vert\vert\mres B_{r}(p_{\ast}) \neq \emptyset$.

Consider $0 < \rho < \mathrm{inj}(M,g)$ such that $\si\cap B_{\rho}(p_{\ast})$ is homeomorphic to the union of two embedded arcs intersecting only at $p_{\ast}$.
For a fixed $0 < r < \rho/8$, by \eqref{eq:lim_Ri} and \cite[Lemma 4.16]{Man} there is a subsequence along which it holds
\begin{equation}\label{eq:stable.cond1}
\ind\big(u_{\varepsilon_i};M\backslash B_{r}(p_{\ast})\big) = 0,\ \forall\, i.
\end{equation}
Then, by \cite[Lemma 4.11]{Man}
\begin{equation}\label{eq:stable.cond2}
\varepsilon_{i}\vert\nabla u_{\varepsilon_{i}}\vert \geq \frac{1}{4}\min_{|s| < 1 - \beta/2} W(s)\ \text{in}\ \big\{\vert u_{\varepsilon_i}\vert \leq 1 -\beta\big\}\backslash \overline B_{2r}(p_\ast),\ \forall\, i.
\end{equation}

The idea now is similar to the proof of the previous case. However one complication arises due to the fact $\si$ does not have an embedded tubular neighborhood where Fermi coordinates can be defined. To handle this, we will break $\si\backslash B_{r}(p_{\ast})$ in six pieces and introduce Fermi coordinates with respect to each such subcurve. Now in each piece we describe a portion of $\{u_i = 0\}$ as a graph. Composing the functions with a parametrization of $\si$ we will then see $\{u_i = 0\}$ as a parametrized graph.

As before, let $\sigma: \mathbb{S}^1 \to M$ be a primitive unit-speed parametrization of $\si$ and let $n = J\sigma^{\prime}$ be the unit normal determined by $\sigma$. Consider $\delta > 0$, such that the map $\Phi: \mathbb{S}^1\times (-\delta,\delta) \to U \subset M$, $\Phi(\theta,t) = \exp_{\sigma(\theta)}\big(tn(\theta)\big)$, is an immersion. We can choose $U$ which does not intersect the other components of $\spt V$.

%There exist $\theta_1,\theta_2 \in \mathbb{S}^1$ such that $\sigma(\theta_1) = \sigma(\theta_2) = p_\ast$ and $\sigma: \mathbb{S}^{1}\backslash\{\theta_1,\theta_2\} \to \si\backslash\{p_\ast\}$ is injective. So, by abuse of notation, in the following we use $\theta$ to denote the points of $\si\backslash\{p_\ast\}$.

\begin{figure}[!htb]
\centering
\includegraphics[scale=0.09]{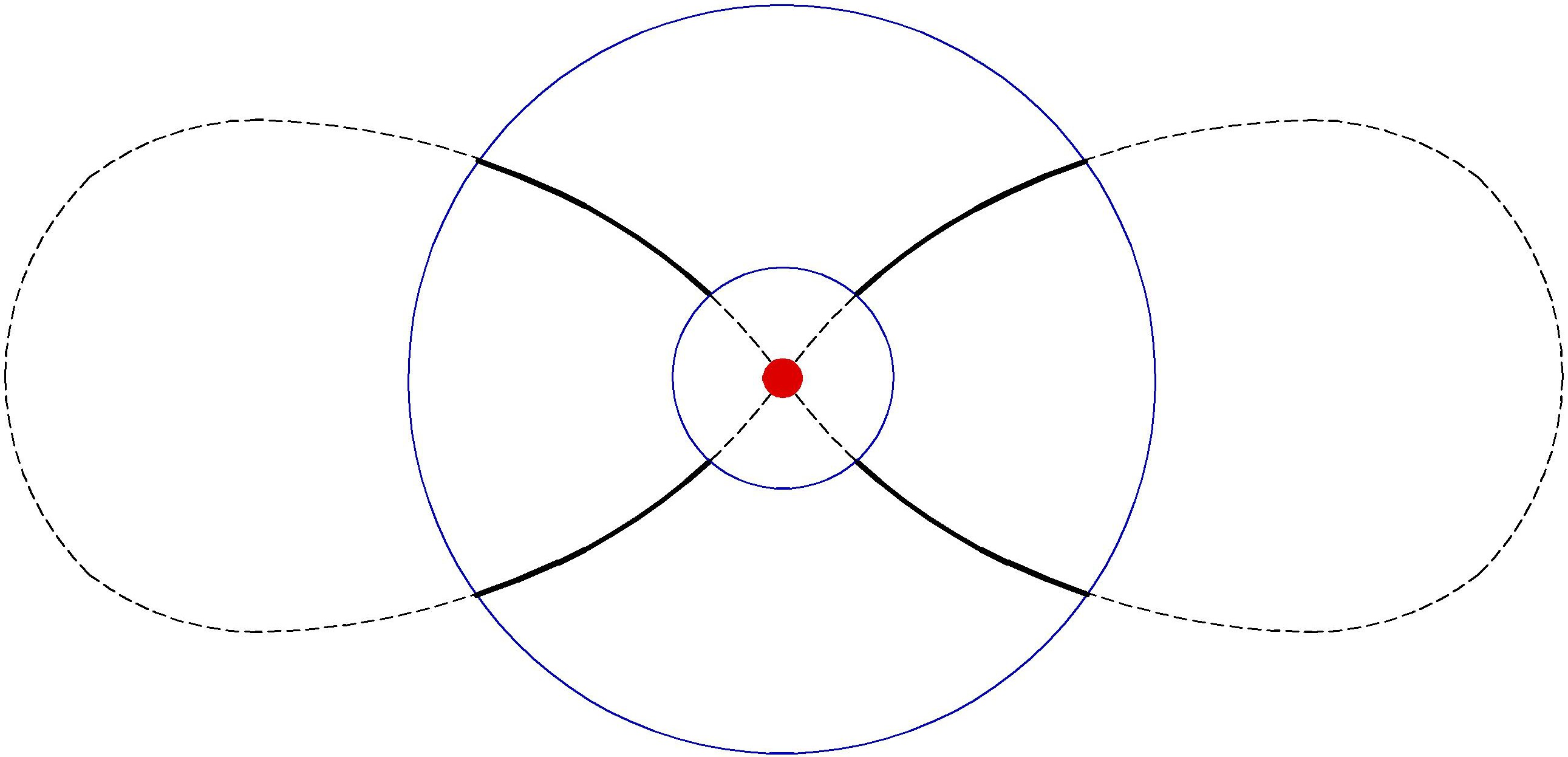}
\caption{$\si\cap\big(B_{\rho}(p_{\ast})\backslash \overline B_{2r}(p_{\ast})\big)$.}
\end{figure}

Observe that $\si\cap\big(B_{\rho}(p_{\ast})\backslash \overline B_{2r}(p_{\ast})\big)$ is an embedded curve in $M$, which has four connected components $\si^{\lambda}$, $\lambda=1,2,3,4$. Taking a smaller $\delta$ if necessary, the map $\Phi$ above define Fermi coordinates $\si^{\lambda}\times(-\delta,\delta) \to M$, $\forall\,\lambda$, such that the metric $g$ assumes the form \eqref{eq:met.Fermi}. Observe that the sets $\Phi\big(\si^{\lambda}\times(-\delta,\delta)\big)$ and $\Phi\big(\si^{\nu}\times(-\delta,\delta)\big)$ possibly have non empty intersection for $\lambda \neq \nu$. However, for each fixed $\lambda$, $\Phi\big(\si^{\lambda}\times(-\delta,\delta)\big)$ is an embedded two-dimensional submanifold of $M$.

By \eqref{eq:stable.cond1}, \eqref{eq:stable.cond2} and \cite[Lemma 4.11]{Man} passing to a subsequence if necessary we can apply the results in \cite{WW2} to conclude that $\{u_{\varepsilon_i} = 0\}\cap \big(B_{\rho}(p_{\ast})\backslash \overline B_{2r}(p_{\ast})\big)$ consists of a finite number of smooth simple curves $\Gamma_{i,1},\ldots,\Gamma_{i,Q}$, with $Q$ uniformly bounded independent of $i$, such that the bound \eqref{eq:mean.curv.bound} holds and $\Gamma_{i,\ell}$ has four connected components $\Gamma_{i,\ell}^{\lambda}$, $\lambda=1,2,3,4$. Moreover, $\Gamma_{i,\ell}^{\lambda}$ is the graph of a $C^{\infty}$ function $f_{i,\ell}^{\lambda}: \si^{\lambda} \to \re$  (with respect to the Fermi coordinates as above), such that \eqref{eq:sheets} holds. So we can assume,
$$f_{i,1}^{\lambda} < \ldots < f_{i,Q}^{\lambda}.$$
For each $\ell=1,\ldots,Q$ and each $\lambda=1,2,3,4$, we consider the composition $f_{i,\ell}^{\lambda}\circ\sigma: \sigma^{-1}(\si^{\lambda}) \to \re$.

\begin{figure}[!htb]
\centering
\includegraphics[scale=0.09]{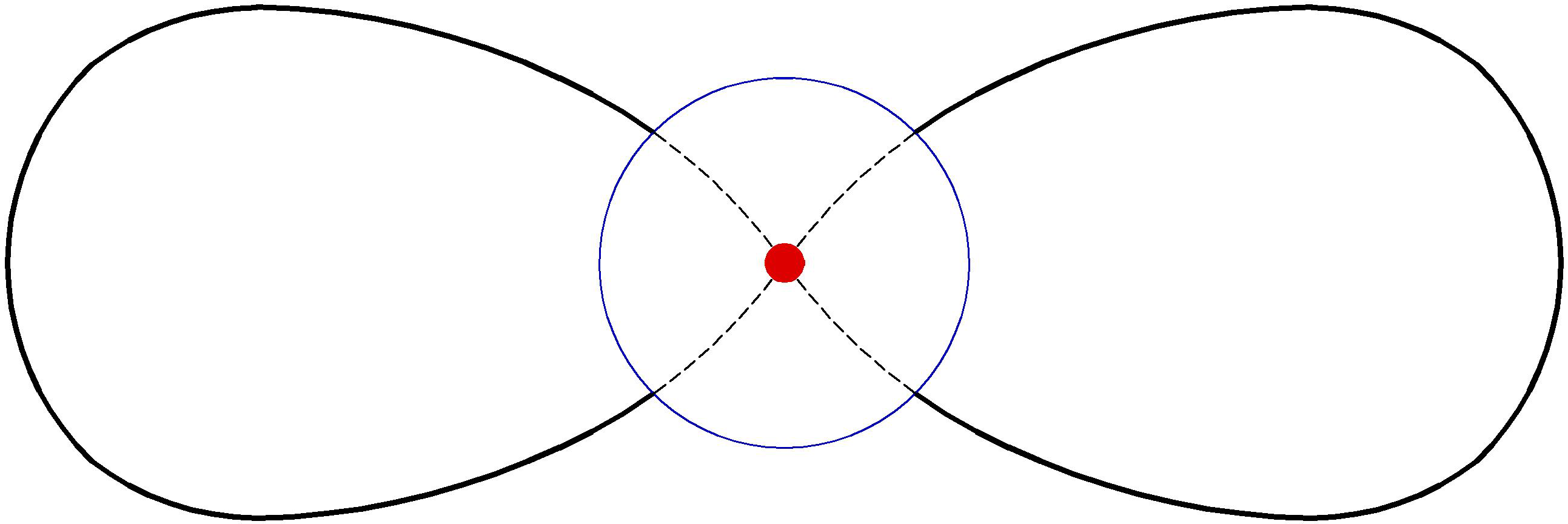}
\caption{$\si\backslash \overline B_{\rho/2}(p_{\ast})$.}
\end{figure}

On the other hand, $\si\backslash \overline B_{\rho/2}(p_{\ast})$ consists of two connected components $\widetilde{\si}^{\mu}$, $\mu = 1,2$, and we can also introduce Fermi coordinates $\widetilde{\si}^{\mu}\times(-\delta,\delta) \to M$.
Also, the properties \eqref{eq:stable.cond1} and \eqref{eq:stable.cond2} hold with $r$ replaced by $\rho/6$. So, we can apply again the results in \cite{WW2} to conclude $\{u_{\varepsilon_i} = 0\}\cap \big(M\backslash \overline B_{\rho/2}(p_{\ast})\big)$ consists of a finite number of smooth simple curves $\widetilde{\Gamma}_{i,1},\ldots,\widetilde{\Gamma}_{i,\widetilde{Q}}$, with $\widetilde{Q}$ uniformly bounded independent of $i$, such that the bound \eqref{eq:mean.curv.bound} holds, and $\widetilde{\Gamma}_{i,\ell}$ has two connected componets $\widetilde{\Gamma}_{i,\ell}^{\mu}$, $\mu=1,2$. Moreover, $\widetilde{\Gamma}_{i,\ell}^{\mu}$
is the graph of a $C^{\infty}$ function $\widetilde{f}_{i,\ell}:\widetilde{\si}^{\mu} \to \re$ and \eqref{eq:sheets} holds, hence we can suppose,
$$\widetilde{f}_{i,1}^{\mu} < \ldots < \widetilde{f}_{i,\widetilde{Q}}^{\mu}.$$
As before we consider the composition $\widetilde{f}_{i,\ell}\circ\sigma: \sigma^{-1}(\widetilde{\si}^{\mu}) \to \re$.

We can assume $Q = \widetilde{Q}$ and reindexing if necessary, it follows that the curves
$\widetilde{\Gamma}_{i,\ell}$ and $\Gamma_{i,\ell}$ coincide in the set $\si\cap\big(B_{\rho}(p_{\ast})\backslash \overline B_{\rho/2}(p_{\ast})\big)$. Thus, we can join the functions $f_{i,\ell}^{\lambda}\circ \sigma$ and $\widetilde{f}_{i,\ell}^{\mu}\circ \sigma$ in order to obtain a smooth function $h_{i,\ell}: S_{2r} \to \re$, where from now on we denote $S_{R} := \mathbb{S}^{1}\backslash\sigma^{-1}\big(\si\backslash \overline B_{R}(p_{\ast})\big)$.

Suppose $\Theta^{1}(V\mres U,\cdot) > 1$. We will proceed as in the proof of case (a), assuming $Q = 2$. Denote $h_i = h_{i,2} - h_{i,1}$. Let $H_{i,\ell}: S_{2r} \to \re$ be the geodesic curvature of the parametrized curve $\theta \mapsto \exp_{\sigma(\theta)}\big(h_{i,\ell}(\theta)n(\theta)\big)$.

Observe the functions $f_i := f_{i,2}^{\lambda} - f_{i,1}^{\lambda}$ and $\widetilde{f}_i := \widetilde{f}_{i,2}^{\mu} - \widetilde{f}_{i,1}^{\mu}$ satisfy an equation of the same format as \eqref{eq:mean_curv_pde}. Writing each such equation using the parametrization $\sigma$, we conclude $h$ satisfy \eqref{eq:mean_curv_pde2} with the arguments satisfying the same properties as in case (a.1).

Now, the proof follows exactly as in case (a.2) replacing $D_R$ by $S_R$. So, at the end we obtain $\widehat{h} \in C^{\infty}\big(\mathbb{S}^{1}\backslash\sigma^{-1}(p_{\ast})\big)\cap L^{\infty}(\mathbb{S}^1)$ (by elliptic regularity and Lemma \ref{lem.Linfty}) satisfying
$$\int_{\mathbb{S}^1}\big[(\widehat{h}^{\,\prime})^2 - (K_g\circ\sigma) \widehat{h}^2\big]d\theta = 0.$$
This is a contradiction and so $\Theta^{1}(V\mres U,\cdot) = 1$.
\end{proof}

\begin{rem}
Since the domain of $\widehat{h}$ has dimension $1$, a priori there is no removable singularity result available. So in cases (a.2) and (b) not necessarily $\widehat{h}$ is a classical solution of the Jacobi equation in all of $\mathbb{S}^{1}$.
\end{rem}

\begin{dem1}
By Lemma \ref{lem:bound.dim} there exist a sequence $\{X_i\}_{i\in \N}$ of cubical subcomplexes with dimension bounded independent of $i$, and a sequence of homotopy classes $\{\Pi_i\}_{i\in \N}$ of $\mathbf{F}$-continuous sweepouts $\Phi: X_i \to \mathcal{Z}_1(M;\mathbb{Z}_2)$, such that 
\begin{align*}
 \lim_{i \to \infty} \mathbf{L}_{\textrm{AP}}(\Pi_i) &= \omega_1(M,g).
\end{align*}

Moreover, by Theorem \ref{prop:PT-AP},
$$h_0^{-1} \lim_{\varepsilon\to 0}\mathbf{L}_{\varepsilon}(\widetilde\Pi_i) = \mathbf{L}_{\textnormal{AP}}(\Pi_i) =: \mathbf{L}_i.$$
It follows from the previous theorem that there exist $V_i \in \mathbf{C}_{\mathrm{PT}}(\widetilde\Pi_i)$ of the form
\begin{equation}\label{eq:CPT}
V_i = \mathbf{v}(\sigma_{i,\infty},\mathbf{1}_{\sigma_{i,\infty}}) + \sum_{j=1}^{N_i} \mathbf{v}(\sigma_{i,j},\mathbf{1}_{\sigma_{i,j}}),
\end{equation}  
where $\sigma_{i,\infty}$ is a primitive figure eight geodesic, $\sigma_{i,1},\dots,\sigma_{i,N_i}$ are primitive simple closed geodesics, and all the curves are pairwise disjoint.

Since $\mathbf{L}_{\textrm{AP}}(\Pi_i) \to \omega_1(M,g)$, the mass of the varifolds in the sequence $\{V_i\}_i$ is uniformly bounded, thus a subsequence of $\{V_i\}_i$ converge as varifolds to an integral varifold $V$ and the convergence is smooth outside $\sing V$. 
On the other hand, since $\length(\sigma_{i,j}) \geq 2\,\mathrm{inj}(S)$, $\forall\, (i,j)$, the sequence $\{N_{i}\}_i$ is uniformly bounded, and hence it is finite. Also, it is well known that in a hyperbolic surface for any $L>0$ there exists only a finite number of closed geodesics with length at most $L$. 
Therefore passing again to a subsequence if necessary we have 
$$V_i = V = \mathbf{v}(\sigma_{\infty},\mathbf{1}_{\sigma_{\infty}}) + \sum_{j=1}^{N} \mathbf{v}(\sigma_{j},\mathbf{1}_{\sigma_{j}}),$$ where $\sigma_{\infty}$ is a figure eight geodesic, and $\sigma_{j},\, j=1,\ldots,N$, are simple closed geodesics, and all the curves are pairwise disjoint. 

The fact $N \leq 3m -3$ follows from the Collar Theorem \cite[Theorem 4.1.1]{Bus}, since $\sigma_{1},\ldots,\sigma_{N}$ are disjoint simple closed geodesics.

Finally, by a result of Buser \cite[Theorems 4.2.2 and 4.3.1]{Bus}, the figure eight $\sigma_{\infty}$ satisfies
$$\length(\sigma_{\infty}) > 2\big(-\inf_{M}K_g\big)^{-1/2}\mathrm{arccosh}\,3.$$
This finishes the proof of the theorem.
\end{dem1}

\subsection{An auxiliar Lemma}

\begin{lem}\label{lem.Linfty} Let $\widehat{h}$ be as in the cases (a.2) and (b) in the proof of Theorem \ref{thm:mult.one}. Then
$\widehat{h} \in L^{\infty}(\mathbb{S}^1)$ and $\widehat{h} \neq 0$ a.e on $\mathbb{S}^1$.
\end{lem}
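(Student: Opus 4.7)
The plan is to combine classical linear ODE regularity with the Harnack inequality for the positive functions $h_i$.

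For the $L^{\infty}$ bound, my strategy is ODE extension. On each connected component $I$ of $\mathbb{S}^1 \setminus \sigma^{-1}(p)$ (in case (a.2)) or $\mathbb{S}^1 \setminus \sigma^{-1}(p_*)$ (in case (b)), $\widehat h$ is, by the elliptic regularity step already carried out in the body of the proof, a smooth classical solution of the Jacobi equation $\varphi'' + (K_g\circ\sigma)\varphi = 0$. Since $K_g\circ\sigma$ is the restriction to $I$ of a smooth function defined on all of $\mathbb{S}^1$, the standard existence and uniqueness theorem for linear ODEs implies that $\widehat h|_I$ coincides with the unique smooth solution on the closed interval $\overline I$ having the same value and derivative at some chosen $t_0 \in I$. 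Consequently $\widehat h|_I$ extends smoothly to the compact set $\overline I$ and is bounded there, so summing over the finitely many components yields $\widehat h \in L^{\infty}(\mathbb{S}^1)$.

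For $\widehat h \neq 0$ a.e., the plan combines a uniform Harnack inequality with the isolated-zero property of nontrivial ODE solutions. The equation \eqref{eq:mean_curv_pde2} is a uniformly elliptic linear second-order ODE for $h_i > 0$ with uniformly bounded coefficients and ellipticity constant $A\widehat B \geq \kappa^2 > 0$, so positive solutions satisfy a uniform Harnack inequality on each compact subinterval of the domain. After the normalization $\sup \widehat h_i = 1$ on $\overline{D_{2r_0}}$ (respectively $\overline{S_{2r_0}}$), let $t_i$ be a sup-attaining point; by compactness we can pass to a subsequence with $t_i \to t_*$, and by construction $\mathrm{dist}(t_*,\sigma^{-1}(p)) \geq 2r_0$. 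Choose a small closed subinterval $K$ containing $t_*$ in its interior and compactly contained in $\mathbb{S}^1 \setminus \sigma^{-1}(p)$. Then $\sup_K \widehat h_i \to 1$, and Harnack gives $\inf_K \widehat h_i \geq c > 0$ for all large $i$. Strong $L^2_{\mathrm{loc}}$ convergence (extracting an a.e.-convergent subsequence) transfers this to $\widehat h \geq c$ a.e.\ on $K$, so $\widehat h$ is nontrivial on the component containing $K$; because a nontrivial smooth solution of a regular linear second-order ODE has only isolated zeros, $\widehat h \neq 0$ a.e.\ on that component.

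The main obstacle is to ensure nontriviality on \emph{every} component of $\mathbb{S}^1 \setminus \sigma^{-1}(p)$. This is automatic in case (a.2), where there is a single component. In case (b), however, $\mathbb{S}^1 \setminus \sigma^{-1}(p_*)$ decomposes into two arcs corresponding to the two loops of the figure eight, and one must rule out $\widehat h$ vanishing identically on one of them. The plan here is to apply the Harnack argument separately on each arc: the matching between $\Gamma_{i,\ell}$ and $\widetilde\Gamma_{i,\ell}$ on the overlap annulus $B_\rho(p_*) \setminus \overline B_{\rho/2}(p_*)$ uniformly relates the suprema of $h_i$ on the two arcs, which combined with the Harnack estimate on each yields a uniform positive lower bound for $\widehat h_i$ on a compact subinterval of each component. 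Arguing as above then gives $\widehat h \neq 0$ a.e.\ on $\mathbb{S}^1$.
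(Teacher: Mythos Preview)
Your $L^\infty$ argument is correct and takes a genuinely different, far more elementary route than the paper. The paper, following \cite[Proposition~4.3]{CM2}, works at the pre-limit level: it invokes White's foliation by geodesics with prescribed boundary data (Proposition~\ref{prop:foliation}), builds approximate Allen--Cahn solutions over each leaf via \cite[Theorem~7.4]{CM1}, and slides these against $u_{\varepsilon_i}$ by the maximum principle. The outcome is the uniform estimate $\sup_{S_{2r}} h_i \le c'\,\inf_{S_\rho} h_i$ with $c'$ independent of $i$ and $r$, which delivers both conclusions of the lemma at once. Your argument is entirely post-limit: since $\widehat h$ solves the linear ODE $\varphi''+(K_g\circ\sigma)\varphi=0$ on each open arc and the coefficient extends smoothly across the punctures, global existence for linear ODEs forces $\widehat h$ to extend smoothly to each closed arc, hence to be bounded. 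This bypasses the entire barrier construction; what you forgo is the uniform $i$-independent control on $h_i$ itself.

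There is, however, a gap in your handling of case (b). The gluing of $\Gamma_{i,\ell}$ with $\widetilde\Gamma_{i,\ell}$ in the annulus $B_\rho(p_*)\setminus\overline B_{\rho/2}(p_*)$ is done loop by loop: two of the four arcs $\Sigma^\lambda$ feed into $\widetilde\Sigma^1$ and the other two into $\widetilde\Sigma^2$, so this matching does not compare $h_i$ on one loop of the figure eight with $h_i$ on the other. The Harnack inequality on a disconnected domain likewise applies componentwise and gives no cross-loop relation. As written, your argument therefore only yields $\widehat h\not\equiv 0$ on the loop where the supremum is attained; to get $\widehat h\neq 0$ a.e.\ on all of $\mathbb{S}^1$ you need a genuine link between the sheet separations over the two loops. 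The paper's uniform barrier estimate $\sup_{S_{2r}} h_i \le c'\inf_{S_\rho} h_i$ is intended to supply exactly this, since the supremum on the left ranges over both loops while the infimum on the right can be taken on either one.
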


We are going to describe the argument for the case (b), the other one is similar.
The proof follows along the same lines as the proof of \cite[Proposition 4.3]{CM2}. We will give some of the details below. One of the main ingredients of the proof in \cite{CM2} is a result due to White \cite[Appendix]{Wh}. In the original paper this result is only stated for funtionals defined in a ball of $\mathbb{R}^2$, however a inspection of its proof reveals that it works in every dimension, so the following version also holds.

\begin{prop}\label{prop:foliation}
Let $\Phi$ be an even elliptic integrand, where $\Phi$ and $D_2 \Phi$ are $C^{2,\alpha}$. Let $\Phi_r$ be the integrand defined by $\Phi_r(x, v) = \Phi(rx, v)$. There is an $\eta > 0$ such that if $r < \eta$ and if
$$w : [-1,1] \to \mathbb{R}, \quad \Vert w \Vert_{C^{2,\alpha}} < \eta,$$
then for each $t \in [-1,1]$, there is a $C^{2,\alpha}$ function $v_t : [-1,1] \to \mathbb{R}$ whose graph is $\Phi_r$-stationary and such that 
$$v_t(\theta) = w(\theta) + t,\quad \text{ if } \theta \in \{-1,1\}.$$
Moreover, $v_t$ depends in a $C^1$ way on $t$ so that the graphs of the $v_t$ foliate a region of $\mathbb{R}^2$. If $\Gamma$ is a $C^1$ properly immersed $\Phi_{r}$-stationary curve in $B_{1/2}(0)$ with $\partial \Gamma \subset \mathrm{graph}\, v_t$, then $\Gamma \subset \mathrm{graph}\, v_t$. 
\end{prop}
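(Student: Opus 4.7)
The plan is to prove both assertions through a uniform $L^\infty$ bound on $\widehat h_i$ in a neighborhood of the singular set $\sigma^{-1}(p_*)$ obtained via the foliation construction of Proposition \ref{prop:foliation}, in the spirit of \cite[Proposition 4.3]{CM2}. I will focus on case (b); case (a.2) is easier since there is one singular point and two branches instead of two singular points and four branches. Away from $\sigma^{-1}(p_*)$ the sheets $\Gamma_{i,\ell}$ converge smoothly to $\sigma$, so $\widehat h_i \to \widehat h$ uniformly on compact subsets of $\mathbb{S}^1 \setminus \sigma^{-1}(p_*)$ and the entire content of the lemma concerns what happens as $\theta \to \sigma^{-1}(p_*)$.

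For the $L^\infty$ estimate, I would fix $r>0$ small and rescale the ball $B_r(p_*)$ by $r^{-1}$. The rescaled metric $g_r$ is $C^{2,\alpha}$-close to the Euclidean metric as $r\to 0$, and the geodesic curvature bound \eqref{eq:mean.curv.bound} yields a rescaled curvature of order $r\,\varepsilon_i(\log|\log \varepsilon_i|)^2\to 0$. For each of the four branches $\sigma^\lambda$ meeting $p_*$, I would approximate the near-stationary sheets $\Gamma_{i,\ell}^\lambda$ by exactly $\Phi_r$-stationary graphs $\bar\Gamma_{i,\ell}^\lambda$ with the same Dirichlet data on the rescaled boundary; the implicit function theorem combined with \eqref{eq:mean.curv.bound} and the sheet-separation estimate \eqref{eq:sheets} ensures that the correction $\Gamma_{i,\ell}^\lambda-\bar\Gamma_{i,\ell}^\lambda$ has $C^0$-norm of order $o(h_i)$. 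Applying Proposition \ref{prop:foliation} with $w$ representing $\bar\Gamma_{i,1}^\lambda$ produces a $C^1$ family $v_t$ of $\Phi_r$-stationary graphs foliating a neighborhood, and the uniqueness statement forces $\bar\Gamma_{i,2}^\lambda$ to coincide with some $v_{t_i}$. Since the family is $C^1$ in $t$, the interior separation of the two leaves is controlled by the boundary separation, giving
$$\sup_{\theta\in \sigma^{-1}(\sigma^\lambda\cap B_r(p_*))} h_i(\theta) \leq C\, h_i(\theta_r^\lambda)$$
for a constant $C$ independent of $i$ and of $r$ small, where $\theta_r^\lambda$ is a fixed boundary point on the branch. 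Dividing by $\sup h_i$ and using $\widehat h_i(\theta_r^\lambda)\leq 1$ yields $\widehat h_i\leq C$ uniformly near $\sigma^{-1}(p_*)$, and passing to the limit gives $\widehat h\in L^\infty(\mathbb{S}^1)$.

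For $\widehat h\neq 0$ a.e., recall that $h_{i,2}>h_{i,1}$ implies $h_i>0$, so $\widehat h\geq 0$, and $\sup\widehat h_i=1$ by construction. The branchwise foliation bound forces the supremum of $\widehat h_i$ to be realized in a compact subset of $\mathbb{S}^1\setminus \sigma^{-1}(p_*)$: otherwise the boundary value $\widehat h_i(\theta_r^\lambda)\geq 1/C$ for $r$ small, so the supremum can be replaced by such a bounded-away point at the same order. Passing to a subsequential limit produces a point $\theta^*\in \mathbb{S}^1\setminus \sigma^{-1}(p_*)$ with $\widehat h(\theta^*)\geq 1/C>0$, so $\widehat h\not\equiv 0$. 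Applying the same foliation bound to each of the four branches in turn propagates a uniformly positive lower bound across $\sigma^{-1}(p_*)$, so $\widehat h\not\equiv 0$ on either arc of $\mathbb{S}^1\setminus \sigma^{-1}(p_*)$; being then a nontrivial classical solution of the linear second-order ODE $\widehat h''+(K_g\circ\sigma)\widehat h=0$ on each arc, $\widehat h$ has discrete zero set there, hence $\widehat h\neq 0$ a.e.\ on $\mathbb{S}^1$. The main obstacle I anticipate is the perturbation step constructing the exactly stationary graphs $\bar\Gamma_{i,\ell}^\lambda$ with a $C^0$ correction of order $o(h_i)$: this requires combining \eqref{eq:mean.curv.bound} and \eqref{eq:sheets} in a quantitatively sharp way so that the foliation conclusion about $\bar\Gamma_{i,\ell}^\lambda$ transfers cleanly to the actual sheets, and coordinating the four branchwise applications of Proposition \ref{prop:foliation} so that the resulting bounds piece together on a full neighborhood of $\sigma^{-1}(p_*)\subset\mathbb{S}^1$.
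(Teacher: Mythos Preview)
Your proposal is not a proof of Proposition~\ref{prop:foliation} at all: you are attempting to prove Lemma~\ref{lem.Linfty}, which \emph{uses} Proposition~\ref{prop:foliation} as a black box. Proposition~\ref{prop:foliation} is the abstract statement about foliations by $\Phi_r$-stationary graphs with prescribed boundary data; the paper does not prove it but cites it from White~\cite[Appendix]{Wh}, noting only that White's argument (stated there for $2$-dimensional domains) goes through verbatim in dimension~$1$. There is nothing about $\widehat h$, $p_*$, sheets, or Allen--Cahn in the statement you were asked to address.

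Setting that aside and comparing your outline to the paper's actual proof of Lemma~\ref{lem.Linfty}, there is a substantive divergence in strategy. You propose to replace the near-geodesic level curves $\Gamma_{i,\ell}^\lambda$ by exactly $\Phi_r$-stationary graphs $\bar\Gamma_{i,\ell}^\lambda$ via the implicit function theorem, and then invoke the uniqueness clause of Proposition~\ref{prop:foliation} to trap $\bar\Gamma_{i,2}^\lambda$ as a leaf of the foliation built from $\bar\Gamma_{i,1}^\lambda$. The paper does \emph{not} do this. It applies Proposition~\ref{prop:foliation} only to manufacture a foliation by genuine geodesics through the boundary values of $f_{i,2}^\lambda$ (respectively $f_{i,1}^\lambda$); the comparison with the actual sheets is then carried out at the PDE level, by building on each leaf $\Sigma_i^\lambda(t)$ an Allen--Cahn solution $\mathfrak{u}_{i,t}^\lambda$ via \cite[Theorem~7.4]{CM1} and using it as a barrier for $u_{\varepsilon_i}$, following \cite[Subsection~4.2]{CM1}. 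This produces a touching leaf at parameter $\tau_i^\lambda \in [0, 7B\varepsilon_i|\log\varepsilon_i|)$ and hence the key inequality $\sup_{S_{2r}} h_i \leq \widetilde c\,(\varepsilon_i|\log\varepsilon_i| + \sup_{S_\rho} h_i)$, after which \eqref{eq:bounds.h} and Harnack finish the job.

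Your approach avoids the Allen--Cahn barrier machinery entirely, which is appealing, but the step you flag as the ``main obstacle'' is exactly where it breaks as written. To get the correction $\Gamma_{i,\ell}^\lambda - \bar\Gamma_{i,\ell}^\lambda = o(h_i)$ you would need a Schauder-type estimate for the Dirichlet problem giving $C^0$ control by the geodesic curvature, together with a quantitative comparison between $\varepsilon_i(\log|\log\varepsilon_i|)^2$ and $\inf h_i$; the latter is available from \eqref{eq:bounds.h}, but you have not supplied the former, and the uniqueness clause of Proposition~\ref{prop:foliation} only applies to \emph{exactly} stationary curves with boundary on a single leaf, so any residual error must be absorbed before you can invoke it. The paper sidesteps this entirely by never asking the level curves themselves to be stationary: the barrier argument compares Allen--Cahn solutions directly, and the $\varepsilon_i|\log\varepsilon_i|$ slack appears naturally as the parameter range in which touching occurs.
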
 

Consider a function $w_{i}^{\lambda}: (-2\rho,2\rho) \to \re$ such that $(w_{i}^{\lambda})^{\prime\prime} = 0$ and $w_{i}^{\lambda}(\pm \rho) = f_{i,2}^{\lambda}(\pm \rho)$. Since $f_{i,2}^{\lambda} \to 0$ in $C^{2,\alpha}\big(B_{\rho}(p_{\ast})\backslash \overline B_{\rho/2}(p_{\ast})\big)$, taking a subsequence if necessary, we can suppose that a suitable rescaling of $w_{i}^{\lambda}$ is under the hyphoteses of Proposition \ref{prop:foliation}. Then we obtain a foliation $t \mapsto \si_{i}^{\lambda}(t)$, $t \in [-\delta,\delta]$, composed of geodesics which are graphs over $\Sigma\cap B_{\rho}(p_{\ast})$. We can choose $\delta > 0$ such that $\cup_{t\in(-\delta,\delta)}\si_{i}^{\lambda}(t) \subset U$. Moreover, the geodesic curvature of $\si_{i}^{\lambda}(t)$ satisfy
$$\big\Vert H_{\si_{i}^{\lambda}(t)}\big\Vert_{C^{0,\alpha}} \leq \eta,$$
where $\eta > 0$ can be made arbitrarily small.

Now, the idea is to use the foliation to construct solutions of the Allen-Chan equation which will be used as barriers to bound $f_{i,2}$ from above. 

Denote $\Omega = (-\rho,\rho)\times \left[-\frac{1}{2},\frac{1}{2}\right]$. Consider Fermi coordinates around $\si_{i}^{\lambda}(t)$ defined by its normal exponential map, denoted $\Phi_{i,t}^{\lambda}: (-\rho,\rho)\times \left[-\frac{1}{2},\frac{1}{2}\right] \to M$. Define $g_{i}^{\lambda}(t)$ as the pullback of $g$ by $\Phi_{i,t}^{\lambda}$.

Let $\mathbb{H}$ be the heteroclinic solution of \eqref{eq:ACeqt} with $\varepsilon=1$, defined on $\re$. Define $\mathbb{H}_{\varepsilon}(s) = \mathbb{H}(\varepsilon^{-1}s)$. We will use some functions defined in \cite{CM1}:
\begin{itemize}
\item[(a)] for a fixed $\delta_{\ast} \in (0,1)$ and $j \in \mathbb{N}$, consider $\chi_j:\re \to [0,1]$ satisfying $\chi_{j}^{\prime} \geq 0$ on $[0,\infty)$ and such that
$$\sum_{k=0}^{3}\varepsilon^{k\delta_{\ast}}\Vert \nabla^{k}\chi_{j}\Vert_{L^{\infty}} \leq 200, \quad \chi_{j}(s) = \left\{
\begin{array}{rl}
1, & |s| \leq \varepsilon^{\delta_{\ast}}\left(1 - \frac{2j - 1}{100}\right),\\\\
0, &  |s| \geq \varepsilon^{\delta_{\ast}}\left(1 - \frac{2j - 2}{100}\right);
\end{array} \right.$$
\item[(b)] $\widetilde{\mathbb{H}}_{\varepsilon}$  is a truncated version of $\mathbb{H}_{\varepsilon}$ defined by 
$$\widetilde{\mathbb{H}}_{\varepsilon}(s) = \chi_{1}(s)\mathbb{H}_{\varepsilon} \pm \big(1 - \chi(s)\big),$$
where the $\pm$ corresponds to $t > 0$ and $t < 0$, respectively.
\item[(c)] $\hat{\chi}$ is a cutoff function such that
$$\hat{\chi}(s) = \left\{
\begin{array}{rl}
1, & |s| \leq B\varepsilon\vert\log\varepsilon\vert,\\\\
0, &  \vert s\vert \geq 2B\varepsilon\vert\log\varepsilon\vert,
\end{array} \right.$$
for a constant $B >> 1$ which will be chosen later, and also 
$$\vert\hat{\chi}^{(k)}\vert = O\big((\varepsilon\vert\log\varepsilon\vert)^{-k}\big),\quad \text{for}\ k \geq 1,\, \varepsilon \to 0;$$
\item[(d)] define
$$\hat{v}^{\sharp}(s) = \kappa \hat{\chi}(s)\mathbb{H}^{\prime}(\varepsilon^{-1}s) + \big(1 - \hat{\chi}(s)\big)\left\{
\begin{array}{rl}
1 - \varepsilon^{3} - \widetilde{\mathbb{H}}_{\varepsilon}(s), & s > 0,\\\\
-1 - \widetilde{\mathbb{H}}_{\varepsilon}(s), &  s < 0,
\end{array} \right.$$ 
where $\kappa \in \re$ is chosen so that the following holds
$$\int_{-\infty}^{\infty}\hat{v}^{\sharp}(s)\mathbb{H}^{\prime}(\varepsilon^{-1}s)\,ds = 0;$$
we choose the constant $B$ of item (c) to be sufficiently large so that (\cite[page 259]{CM1})
$$\sum_{k=0}^{2}\varepsilon^{k\delta_{\ast}}\Vert \nabla^{k}\hat{v}^{\sharp}\Vert_{L^{\infty}} + \varepsilon^{2+\alpha}[\hat{v}^{\sharp}]_{\alpha} = O(\varepsilon^3).$$
\item[(e)] for $(\theta,s) \in \pa\Omega$, define
$$\hat{v}^{\flat}(\theta,s) = \big(1 - \chi_{4}(s)\big)\left\{
\begin{array}{rl}
1 - \varepsilon^{3} - \widetilde{\mathbb{H}}_{\varepsilon}(s), & s > 0,\\\\
-1 - \widetilde{\mathbb{H}}_{\varepsilon}(s), &  s < 0.
\end{array} \right.$$
\end{itemize}

One can then apply Theorem 7.4 in \cite{CM1} to find a function $\mathfrak{u}_{i,t}^{\lambda}: \Omega \to \re$ such that
$$\varepsilon_i\Delta_{g_{i}^{\lambda}(t)}\mathfrak{u}_{i,t}^{\lambda} = W^{\prime}(\mathfrak{u}_{i,t}^{\lambda})$$
and, for all $(\theta,s) \in \pa\Omega$,
$$\mathfrak{u}_{i,t}^{\lambda} = \widetilde{\mathbb{H}}_{\varepsilon}(s) + \chi_{4}(s)\hat{v}^{\sharp}(s) + \hat{v}^{\flat}(\theta,s).$$  

Now, proceeding exactly as in \cite[subsection 4.2]{CM1}, one can prove that for an appropriate choice of the constant $B$, there exist exactly one $\tau_{i}^{\lambda} \in (-\delta,\delta)$ and at least one $q_{i}^{\lambda}$, such that
\begin{enumerate}
\item $\mathfrak{u}_{i,t}^{\lambda} < u_{\varepsilon_i}\circ \Phi_{i,t}^{\lambda}$ on $\Omega$, for all $t \in (\tau_{i}^{\lambda},\delta]$,
\item $\mathfrak{u}_{i,\tau_{i}^{\lambda}}^{\lambda}(q_{i}^{\lambda}) =   u_{\varepsilon_i}\circ \Phi_{i,\tau_{i}^{\lambda}}^{\lambda}(q_{i}^{\lambda})$,
\item $0 \leq \tau_{i}^{\lambda} < 7B\varepsilon_i\vert\log\varepsilon_i\vert$.
\end{enumerate}
This implies
$$f_{i,2}^{\lambda}(\theta) \leq h_{i,\tau_{i}^{\lambda}}^{\lambda}(\theta) \leq h_{i,7B\varepsilon_i\vert\log\varepsilon_i\vert}^{\lambda}(\theta),\quad \text{for}\ 2r\leq \vert\theta\vert < 2\rho,$$
where $h_{i,t}^{\lambda}$ denotes the height of the geodesic $\si_{i}^{\lambda}(t)$ over $\si$.

One can also apply Proposition \ref{prop:foliation} as above using the boundary data of $f_{i,1}^{\lambda}$ to obtain a foliation by geodesics $\widetilde{\si}_{i}^{\lambda}(t)$, whose height over $\si$ is denoted by $\widetilde{h}_{i,t}^{\lambda}$. By a similar argument one can then conclude
$$f_{i,1}^{\lambda}(\theta) \geq \widetilde{h}_{i,7B\varepsilon_i\vert\log\varepsilon_i\vert}^{\lambda}(\theta),\quad \text{for}\ 2r\leq \vert\theta\vert < 2\rho.$$
Hence, by the properties of the foliations and the maximum principle, on $(-2\rho,-2r)\cup(2r,2\rho)$ we have
\begin{align*}
f_{i,2}^{\lambda} - f_{i,1}^{\lambda} &= h_{i,7B\varepsilon_i\vert\log\varepsilon_i\vert}^{\lambda} - \widetilde{h}_{i,7B\varepsilon_i\vert\log\varepsilon_i\vert}^{\lambda}\\ 
&\leq c\left(7B\varepsilon_i\vert\log\varepsilon_i\vert + h_{i,0}^{\lambda} - \widetilde{h}_{i,0}^{\lambda}\right)\\
&\leq \widetilde{c}\left(\varepsilon_i\vert\log\varepsilon_i\vert + \max_{[-\rho,\,\rho]}(h_{i,0}^{\lambda} - \widetilde{h}_{i,0}^{\lambda})\right)\\ 
&\leq \widetilde{c}\left(\varepsilon_i\vert\log\varepsilon_i\vert + \max_{\{-\rho,\,\rho\}}(f_{i,2}^{\lambda} - f_{i,1}^{\lambda})\right),
\end{align*}
where $\widetilde{c}$ is independent of $\lambda$, $i$ and $r$.

It follows that
$$\sup_{S_{2r}}h_{i} \leq \widetilde{c}\left(\varepsilon_i\vert\log\varepsilon_i\vert + \sup_{S_{\rho}}h_{i}\right).$$
Now, on right side of the last inequality we use \eqref{eq:bounds.h} on the first term and the Harnack inequality on the second term to conclude
$$\sup_{S_{2r}}h_{i} \leq c^{\prime}\inf_{S_{\rho}}h_{i},$$
and this holds independently of $i$ and $r$. Taking the limit first as $i \to \infty$ and then as $r \to 0$ we conclude $\widehat{h}$ satisfies the conclusion of the lemma.

\section{Applications for hyperbolic surfaces}\label{sec:4}

Along this section $S$ denotes a closed oriented hyperbolic surface. The geodesics of $S$ will always be parametrized by arc-length. In the following we will often use the same notation for a curve and its parametrization.

\subsection{A sweepout of a pair of pants}\label{sec:pants}

\begin{figure}[!htb]
\centering
\includegraphics[scale=0.08]{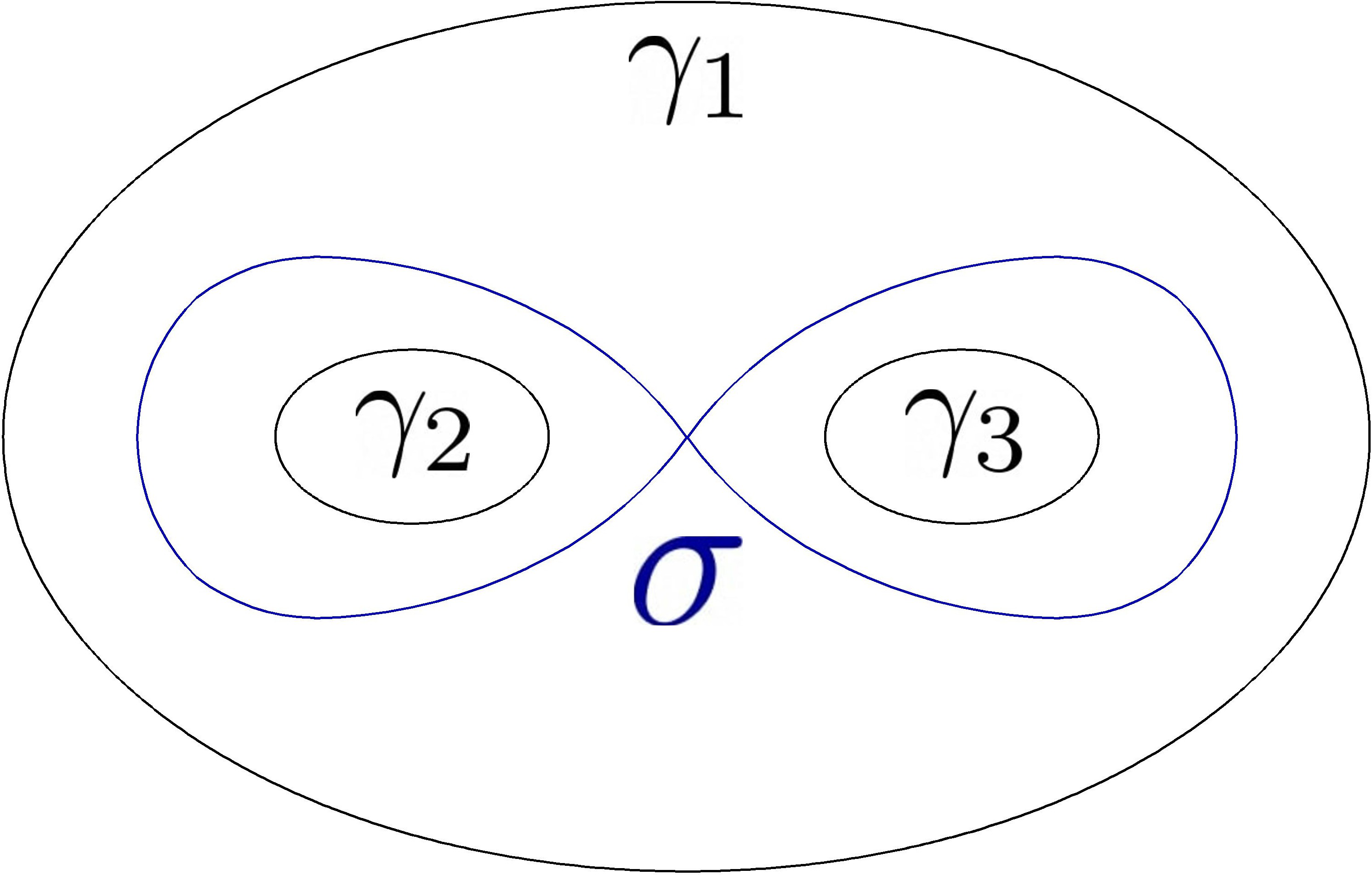}
\caption{A figure eight geodesic in a pair of pants.}
\label{fig3}
\end{figure}

Let $\sigma \subset S$ be a figure eight geodesic. Then, $\sigma$ is contained in a pair of pants $\mathrm{Y}$ \cite[Theorems 4.2.2 and 4.2.4]{Bus}. Let $\gamma_1$, $\gamma_2$ and $\gamma_3$ be the boundary geodesics of $\mathrm{Y}$, labelled accordingly to figure \ref{fig3}. Denote $L_{\sigma} = \length(\sigma)$ and $L_i = \length(\gamma_i)$, $i=1,2,3$. By \cite[Theorem 4.2.2]{Bus} we have
\begin{equation}\label{eq:length.fig.8}
\begin{split}
L_{\sigma} &= 2\,\mathrm{arcosh}\big[\cosh \left(L_3/2\right) + 2\cosh \left(L_1/2\right)\cosh \left(L_2/2\right)\big]\\
&> 2\,\mathrm{arcosh}\,3.
\end{split}
\end{equation}

We clain there is a continuous sweepout 
\begin{align*}
[-1,1] &\to \mathcal{Z}_{1}(\mathrm{Y};\mathbb{Z}_{2})\\
t &\to [[\sigma_t]],
\end{align*}
where $[[\sigma_t]]$ is the element of $\mathcal{Z}_{1}(\mathrm{Y};\mathbb{Z}_{2})$ associated to the curve $\sigma_t$, such that: 
\begin{enumerate}
\item $\sigma_{-1} = \gamma_{2}\cup\gamma_{3}$, $\sigma_1 = \gamma_{1}$.
\item $\sigma_{0} = \sigma$ and
$\max_{t \in [-1,1]} \length(\sigma_t) = \length(\sigma).$
\item For $t > 0$, $\sigma_t$ consists of a simple closed curve, while for $t < 0$, $\sigma_t$ consists of the union of two disjoint simple closed curves.
\end{enumerate}

\begin{figure}[!htb]
\centering
\includegraphics[scale=0.08]{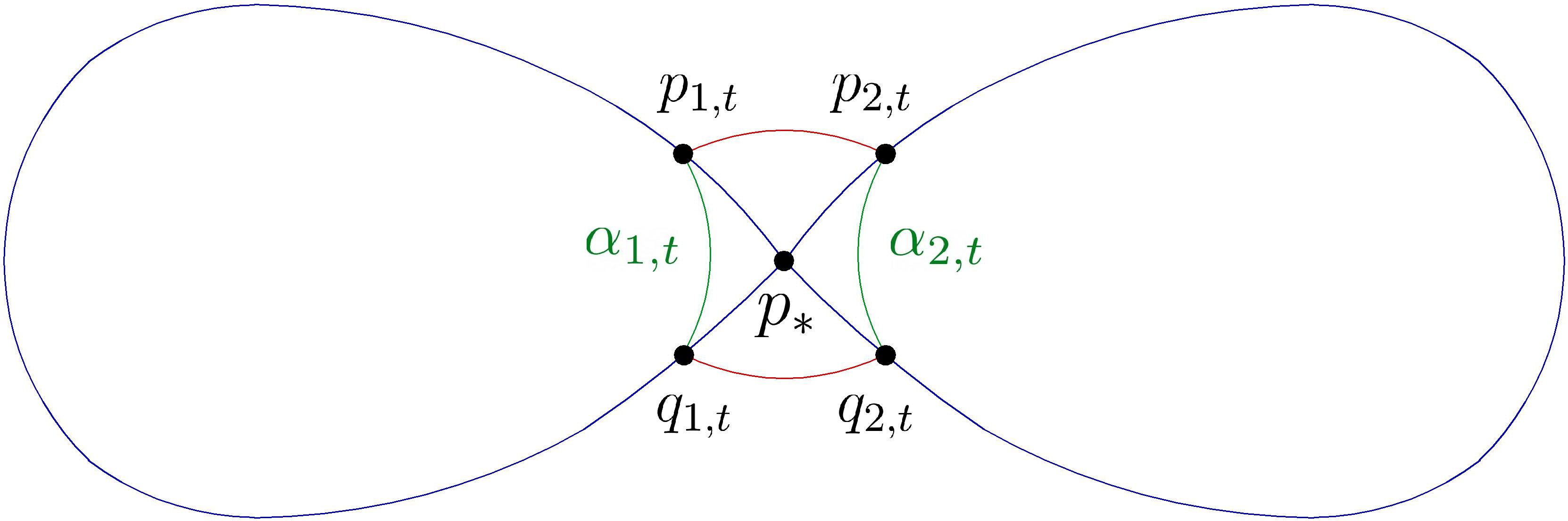}
\caption{Step 2 in the construction of the sweepout $t \to [[\sigma_t]]$.}
\label{fig4}
\end{figure}

Observe $\sigma$ divides $S$ into three connected components $\Omega_1$, $\Omega_2$ and $\Omega_3$, where $\gamma_i \subset \Omega_i$, $i=1,2,3$.
Let $p_{\ast}$ be the self-intersection point of $\sigma$. Consider $\delta < \inj(S)$. Then for every $0 < t \leq \delta$, the set $\partial B_{t}(p_{\ast})\cap\sigma$ consists of four points $p_{1,t}$, $p_{2,t}$, $q_{1,t}$, $q_{2,t}$, labelled as in figure \ref{fig4}, such that 
$$\mathrm{dist}_{S}(p_{j,t},p_{\ast}) = t,\quad \mathrm{dist}_{S}(q_{j,t},p_{\ast}) = t,$$
for all $t \in (0,\delta]$ and $j=1,2$. 
\begin{itemize}
\item[Step 1 -] We set $\sigma_{0} = \sigma$.
\item[Step 2 -] For $-\delta \leq t < 0$ and $j=1,2$, consider the geodesic segment $\alpha_{j,t}$ in $\Omega_j$ joining $p_{j,t}$ and $q_{j,t}$. Observe $\big(\sigma\backslash B_{t}(p_{\ast})\big)\cup\alpha_{1,t}\cup\alpha_{2,t}$ has length strictly less that $\sigma$, however it is not smooth. So we round the corners around $p_{j,t}$ and $q_{j,t}$ to get a smooth embedded curve $\widetilde{\sigma}_t$, which coincides with $\sigma$ outside a compact set, has two connected components (one inside $\Omega_2$ and another one inside $\Omega_3$) and satisfies
$$\length\widetilde{\sigma}_t) < \length(\sigma).$$
Then, we deform $\widetilde{\sigma}_t$ to an embedded curve $\sigma_t$ which has non zero geodesic curvature in every point and such that
$$\length(\sigma_t) < \length(\sigma).$$
Observe we can do this deformation in a such a way the path $t \mapsto \sigma_t$ is continuous in the $C^2$-topology.
\item[Step 3 -] For $0 < t \leq \delta$ we proceed analogously to the previous item, but now we join $p_{1,t}$ and $p_{2,t}$, and then $q_{1,t}$ and $q_{2,t}$, by geodesic segments contained in $\Omega_1$. Joining these curves with $\sigma\backslash B_{t}(p_{\ast})$ and rounding the corners as before, we obtain a smooth embedded connected curve $\sigma_t \subset \Omega_1$, which satisfies
$$\length(\sigma_t) < \length(\sigma).$$
\item[Step 4 -] Now, we run the \emph{curve shortening flow} with initial conditon $\sigma_{\delta}$. Then, by the work of Grayson \cite{Gr}, the solution $\sigma_t$ exists for all $t \in [\delta,+\infty)$ and converges to $\gamma_1$, as $t \to +\infty$. Moreover, the curves $\sigma_t$ are all embedded and pairwise disjoint, and determine a foliation of the domain of $\mathrm{Y}$ bounded by $\sigma_{\delta}$ and $\gamma_1$.
\item[Step 5 -] We apply the same procedure for each component $\sigma_{j,-\delta}$ of $\sigma_{-\delta}$, $j=2,3$, to obtain a foliation $\{\sigma_{j,t}\}$ of the domain bounded by $\sigma_{j,-\delta}$ and $\gamma_j$. In this case, we set $\sigma_t = \sigma_{2,t}\cup\sigma_{3,t}$.
\item[Step 6 -] The family of curves obtained is indexed by $t \in \re$. Composing with a homeomorphism $(-1,1) \to \re$ which fixes $0$ we obtain the desired curves, which we still denote by $\sigma_t$. 
\end{itemize}

%\noindent{\bf Type 1}: We consider $S$ such that:\begin{itemize}\item[(a.1)] $S$ admits a pants decomposition by systoles.\item[(a.2)] Exactly $n-1$ of the curves in the previous decompostion are separating, where $n$ is the genus of $S$.\end{itemize}

\subsection{Sharpness of the width lower bound}

Fix $a > 0$. Consider the pair of pants $\mathrm{Y}_a$ such that its three boundary geodesics have length equal to $a$. Denote these geodesics by $\gamma_{a,1},\,\gamma_{a,2},\,\gamma_{a,3}$, and let $\sigma_a$ be the figure eight geodesic of $\mathrm{Y}_a$ as in figure \ref{fig5}.

\begin{figure}[!htb]
\centering
\includegraphics[scale=0.07]{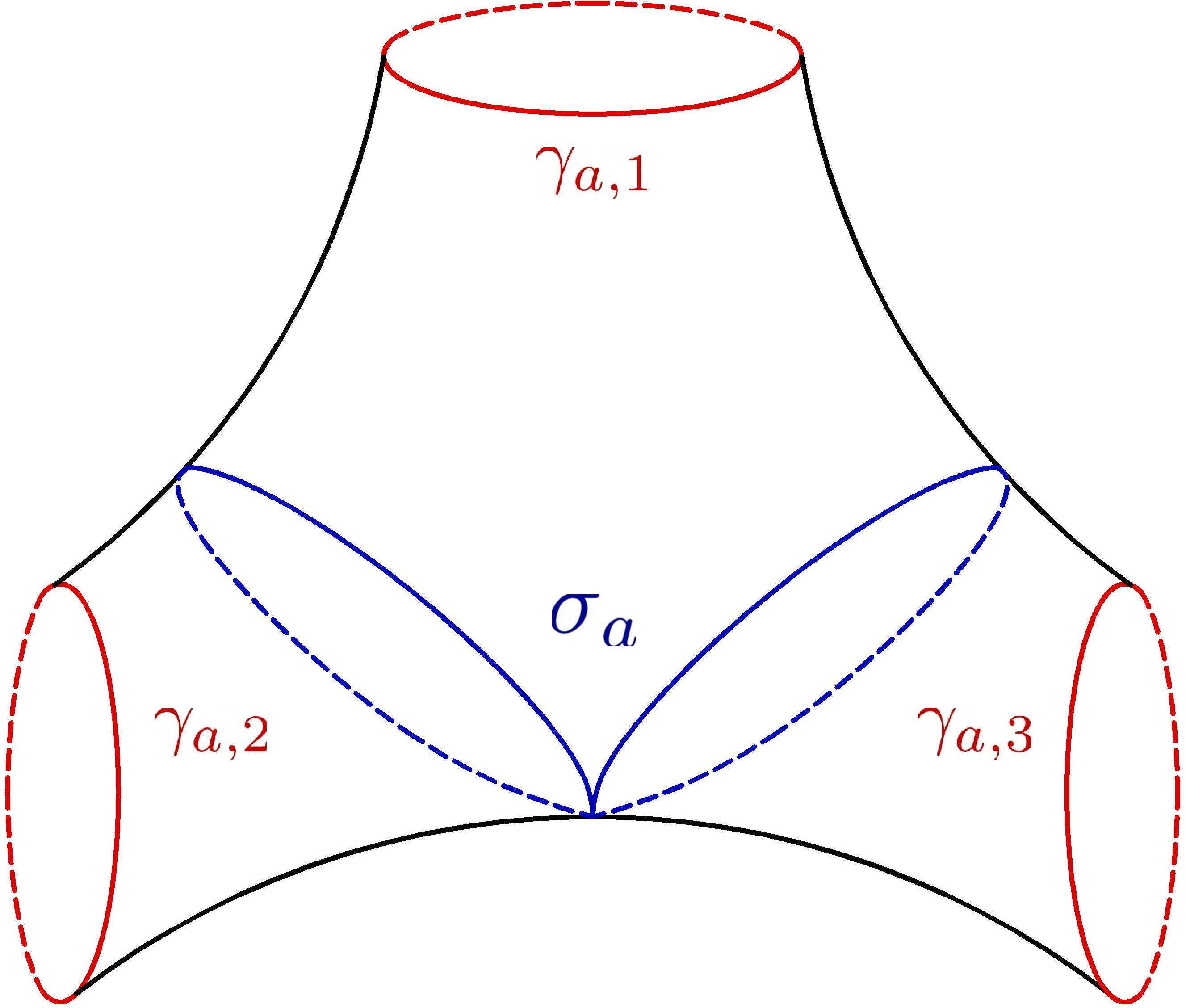}
\caption{The pair of pants $\mathrm{Y}_a$.}
\label{fig5}
\end{figure} 

Let $S_{m,a}$ be a closed hyperbolic surface of genus $m \geq 2$ which is obtained gluing copies $\mathrm{Y}_1,\ldots,\mathrm{Y}_{2g-2}$ of $\mathrm{Y}_a$ following the pattern of the figure \ref{fig6}. Denote by $\widetilde{\ga}_{a,1},\ldots,\widetilde{\ga}_{a,3m-3}$ the copies of the $\ga_{a,i}$ in $S_{g,a}$.

\begin{figure}[!htb]
\centering
\includegraphics[scale=0.06]{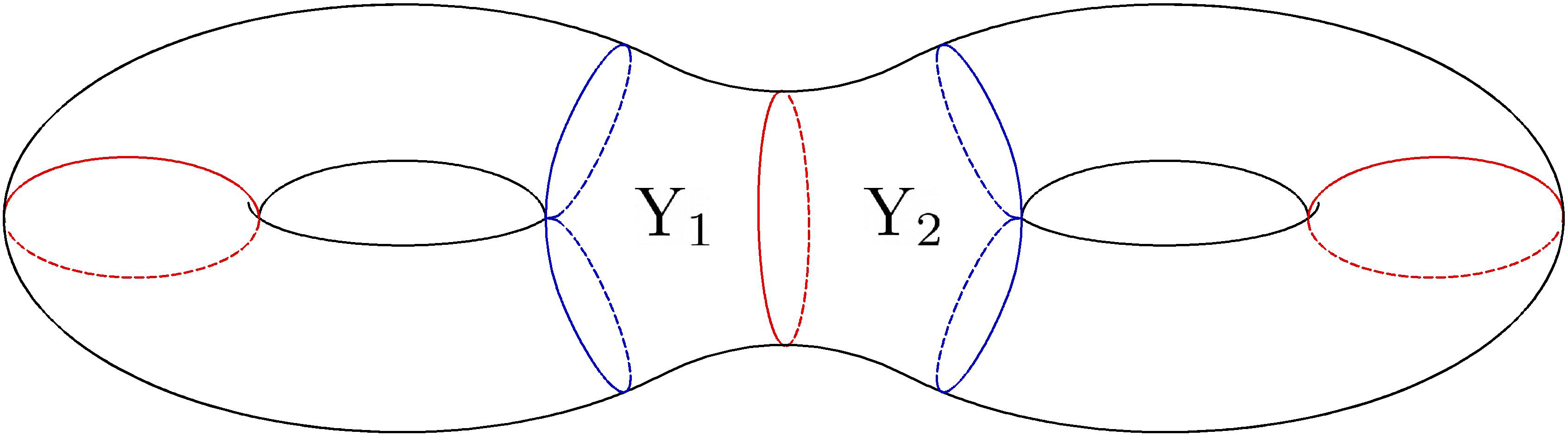}\quad
\includegraphics[scale=0.09]{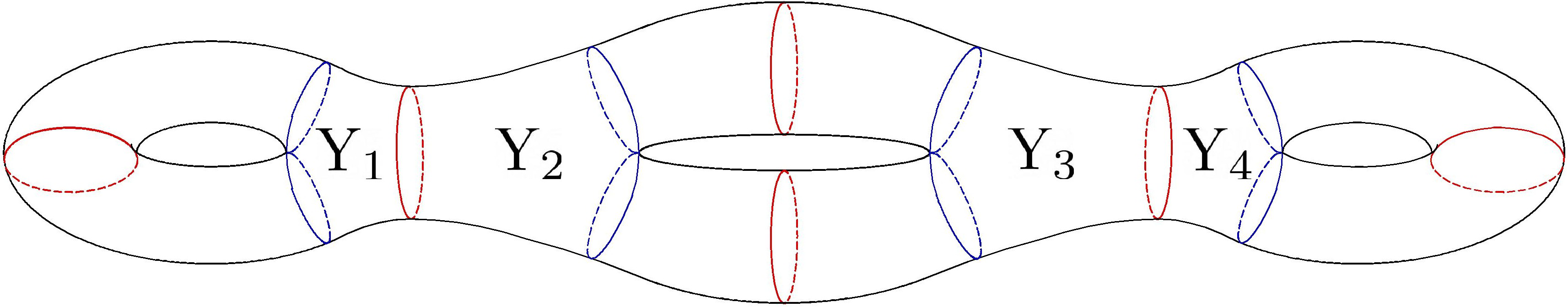}
\caption{The surfaces $S_{2,a}$ and $S_{3,a}$.}
\label{fig6}
\end{figure}

\begin{prop}\label{prop:bound_width}
Suppose the curves $\widetilde{\ga}_{a,1},\ldots,\widetilde{\ga}_{a,3m-3}$ in $S_{m,a}$ are all sytoles. Then
$$\omega_1(S_{m,a}) = \length(\sigma_a).$$
\end{prop}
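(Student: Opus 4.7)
I would prove the equality by matching upper and lower bounds on $\omega_1(S_{m,a})$.

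For the upper bound $\omega_1(S_{m,a}) \leq \length(\sigma_a)$, I would construct a 1-sweepout $\Phi : S^1 \to \mathcal{Z}_1(S_{m,a};\mathbb{Z}_2)$ (flat continuous, without concentration of mass) with $\sup_{t} \mathbf{M}(\Phi(t)) \leq \length(\sigma_a)$. The building block is the pants sweepout of Section \ref{sec:pants} applied to a distinguished pants $\mathrm{Y}_1$ containing a figure eight geodesic $\sigma_a$; this family attains its mass maximum $\length(\sigma_a)$ at the figure eight itself. One extends it to a sweepout of the whole surface by continuing through the complement of $\mathrm{Y}_1$: starting from the 2-chain $\emptyset$, sweep through $\mathrm{Y}_1$ as in Section \ref{sec:pants}, then extend into the adjacent pants using their own pants sweepouts (or short fillings) until $\Omega_t = [S_{m,a}]$. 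Since every pants in the decomposition is isometric to $\mathrm{Y}_a$ (so the corresponding figure eight has the same length) and all systoles have length $a$, the maximum mass along the full sweepout is exactly $\length(\sigma_a)$. The associated loop in $\mathcal{Z}_1(S_{m,a};\mathbb{Z}_2)$ is non-contractible because its lift to $\mathbf{I}_2(S_{m,a};\mathbb{Z}_2)$ connects $0$ to $[S_{m,a}]$.

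For the lower bound, apply Theorem \ref{thm:width.charac} to write
\[
\omega_1(S_{m,a}) = \length(\sigma_\infty) + \sum_{j=1}^{N} \length(\sigma_j),
\]
with $\sigma_\infty$ a primitive figure eight geodesic and the $\sigma_j$ pairwise disjoint primitive simple closed geodesics. By Buser (\cite[Theorems 4.2.2 and 4.3.1]{Bus}), $\sigma_\infty$ is contained in a geodesic pair of pants whose three boundary components are simple closed geodesics $\gamma'_1,\gamma'_2,\gamma'_3$ of $S_{m,a}$. The systole hypothesis forces $\length(\gamma'_i) \geq a$ for each $i$, and by monotonicity of the length formula \eqref{eq:length.fig.8} in the boundary lengths this yields $\length(\sigma_\infty) \geq \length(\sigma_a)$. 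Likewise each $\sigma_j$ has length at least the systole $a$. If $N \geq 1$ then $\omega_1(S_{m,a}) \geq \length(\sigma_a) + a$, contradicting the upper bound; hence $N = 0$ and $\omega_1(S_{m,a}) = \length(\sigma_\infty) \geq \length(\sigma_a)$. Combining with the upper bound yields the stated equality.

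The principal obstacle is the upper bound construction: one must glue the pants sweepout of $\mathrm{Y}_1$ together with continuous fillings through the remaining pants into a single $\mathbf{F}$-continuous family of cycles in $\mathcal{Z}_1(S_{m,a};\mathbb{Z}_2)$ with no concentration of mass, arrange that the maximum mass along the full sweepout does not exceed $\length(\sigma_a)$ (requiring careful use of the topology of $S_{m,a}$ from Figure \ref{fig6}, in particular to ensure that the relevant cycles are null-homologous), and verify non-contractibility of the resulting loop in $\mathcal{Z}_1(S_{m,a};\mathbb{Z}_2)$.
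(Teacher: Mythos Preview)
Your proposal is correct and follows essentially the same approach as the paper: the upper bound comes from concatenating the pants sweepouts of Section~\ref{sec:pants} along the gluing pattern of $S_{m,a}$, and the lower bound comes from Theorem~\ref{thm:width.charac} together with the monotonicity of the figure-eight length formula~\eqref{eq:length.fig.8} under the systole hypothesis. The only (harmless) difference is that the paper's lower bound is slightly more direct: since $\omega_1(S_{m,a}) = \length(\sigma_\infty) + \sum_j \length(\sigma_j) \geq \length(\sigma_\infty) \geq \length(\sigma_a)$, one gets the inequality immediately without needing your intermediate contradiction argument that forces $N=0$.
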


\begin{proof}
On each copy of $\mathrm{Y}_a$, consider the sweepout described in subsection \ref{sec:pants}. Following the gluing pattern we obtain a sweepout $\Phi$ of $S_{m,a}$ (if we glue $\gamma_{a,1}$ and $\gamma_{a,2}$, then in $\mathcal{Z}_1(S;\mathbb{Z}_2)$ this corresponds to the zero cycle). Moreover it follows from the properties of the sweepout of the pairs of pants that $\mathbf{L}(\Phi) = \length(\sigma_a)$. Hence
$$\omega_1(S) \leq \length(\sigma_a).$$

Since $\widetilde{\ga}_{a,1},\ldots,\widetilde{\ga}_{a,3m-3}$ are sytoles, by equation \eqref{eq:length.fig.8} we conclude that a figure eight geodesic $\sigma_{\infty}$ as in the statement of Theorem \ref{thm:width.charac} necessarily satisfy $\length(\sigma_{\infty}) \geq \length(\sigma_a)$. Therefore, $\omega_1(S_{m,a}) = \length(\sigma_a)$. 
\end{proof}

\begin{thm}\label{thm:width.sharp}
There exists $a_0 > 0$ such that for all $a \in (0,a_0]$ and every integer $m \geq 2$, the width $\omega_1(S_{m,a})$ is realized by a primitive figure eight geodesic with multiplicity one and
$$\lim_{a \to 0^{+}} \omega_1(S_{m,a}) = 2\, \mathrm{arccosh}\,3.$$
\end{thm}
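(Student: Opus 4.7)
The plan is to verify the hypothesis of Proposition \ref{prop:bound_width} for $a$ small enough, so that $\omega_1(S_{m,a}) = \length(\sigma_a)$, and then pass to the limit using the explicit formula \eqref{eq:length.fig.8}. The multiplicity one property and the fact that the realizing curve is a figure eight will then follow directly from Theorem \ref{thm:width.charac}.

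The first and main step is to exhibit $a_0 > 0$ such that for every $a \in (0, a_0]$ the gluing curves $\widetilde{\gamma}_{a,1}, \ldots, \widetilde{\gamma}_{a,3m-3}$ are systoles of $S_{m,a}$. By the Collar Theorem \cite[Theorem 4.1.1]{Bus}, each $\widetilde{\gamma}_{a,i}$ admits an embedded collar of width $w(a) = \mathrm{arcsinh}(1/\sinh(a/2))$, with $w(a) \to +\infty$ as $a \to 0^+$. A closed geodesic $\gamma$ of $S_{m,a}$ distinct from the $\widetilde{\gamma}_{a,i}$ either crosses one of these collars transversally, in which case $\length(\gamma) \geq 2 w(a)$, or is entirely contained in one of the open pairs of pants $\mathrm{Y}_j$. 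In the latter case $\gamma$ is non-simple in $\mathrm{Y}_j$ (since the only simple closed geodesics of a pair of pants are its three boundary components), so it is at least as long as a figure eight of $\mathrm{Y}_j$, and hence by \eqref{eq:length.fig.8} its length is bounded below by $2\,\mathrm{arccosh}\,3$. Choosing $a_0 > 0$ small enough that both $2 w(a_0) > a_0$ and $2\,\mathrm{arccosh}\,3 > a_0$, the claim follows.

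Given this, Proposition \ref{prop:bound_width} yields $\omega_1(S_{m,a}) = \length(\sigma_a)$ for every $a \in (0, a_0]$. Theorem \ref{thm:width.charac} then ensures that this width is realized, with multiplicity one, by a primitive figure eight geodesic together with a possibly empty collection of disjoint primitive simple closed geodesics $\sigma_1, \ldots, \sigma_N$. Since every figure eight in $S_{m,a}$ lies in a pair of pants whose three boundary lengths are at least $a$ (because the $\widetilde{\gamma}_{a,i}$ are systoles), the monotonicity of \eqref{eq:length.fig.8} in each $L_i$ forces the figure eight component to have length at least $\length(\sigma_a)$. Combined with the identity $\omega_1(S_{m,a}) = \length(\sigma_a)$, this forces $N = 0$, so the width is attained by a single figure eight geodesic with multiplicity one.

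Finally, applying \eqref{eq:length.fig.8} to the symmetric pair of pants $\mathrm{Y}_a$,
$$\length(\sigma_a) = 2\,\mathrm{arccosh}\bigl[\cosh(a/2) + 2 \cosh^2(a/2)\bigr],$$
and letting $a \to 0^+$, so that $\cosh(a/2) \to 1$ and the bracketed quantity tends to $3$, one obtains
$$\lim_{a \to 0^+} \omega_1(S_{m,a}) = 2\,\mathrm{arccosh}\,3,$$
which completes the proof. The main obstacle is the systole verification in the first step, where the Collar Theorem has to be combined with the pair-of-pants lower bound $2\,\mathrm{arccosh}\,3$ on figure eight lengths to rule out all closed geodesics shorter than $a$; everything else is a direct consequence of the preceding theorems and an elementary computation.
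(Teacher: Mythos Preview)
Your proof is correct and follows essentially the same route as the paper: establish that the $\widetilde{\gamma}_{a,i}$ are systoles via the Collar Theorem, apply Proposition \ref{prop:bound_width}, and compute the limit from \eqref{eq:length.fig.8}. The paper's systole argument is marginally shorter since it only treats \emph{simple} closed geodesics (the systole being automatically simple, any competitor must cross some $\widetilde{\gamma}_{a,j}$), so your second case---non-simple geodesics trapped in a single pair of pants---is unnecessary, and your explicit verification that $N=0$ is already implicit in the paper's proof of Proposition \ref{prop:bound_width}.
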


\begin{proof}
We claim that for $a$ small enough the curves $\widetilde{\ga}_{a,1},\ldots,\widetilde{\ga}_{a,3m-3}$ are systoles of $S_{m,a}$.

Let $\beta$ be a simple closed geodesic of $S_{m,a}$ different from $\widetilde{\ga}_{a,i}$. Then $\beta$ intersects transversely $\widetilde{\ga}_{a,j}$ for some $j$. Hence a portion $\widetilde{\beta}$ of $\beta$ is contained in the collar neighbourhood $C_{a,j}$ of $\widetilde{\ga}_{a,j}$. Observe that $\length(\widetilde\beta)$ is at least the radius of $C_{a,j}$. Since this radius blows up as $a \to 0$, for $a$ small enough we have $\length(\beta) > \length(\widetilde{\ga}_{a,j})$. Hence $\widetilde{\ga}_{a,j}$ is a systole of $S_{m,a}$.

It follows from the previous proposition that
$\omega_1(S_{m,a}) = \length(\sigma_a)$.
Moreover, by \eqref{eq:length.fig.8} we have 
$$\lim_{a \to 0^{+}}\length(\sigma_a) = 2\, \mathrm{arccosh}\,3.$$
This concludes the proof of the Theorem.
\end{proof}

\subsection{Existence non-connected limit interfaces}

Let us now prove Theorem \ref{thm:non_con}.\\

\begin{dem3}
Consider a hyperbolic surface $S_{L}$ as in figure \ref{fig7}, such that the curves $\gamma_1$, $\gamma_2$ and $\gamma_3$ have length $L$. Observe these curves divide $S_{L}$ into two isometric pair of pants. Denote by $\mathrm{Y}_{L}$ any of those pair of pants.

\begin{figure}[!htb]
\centering
\includegraphics[scale=0.08]{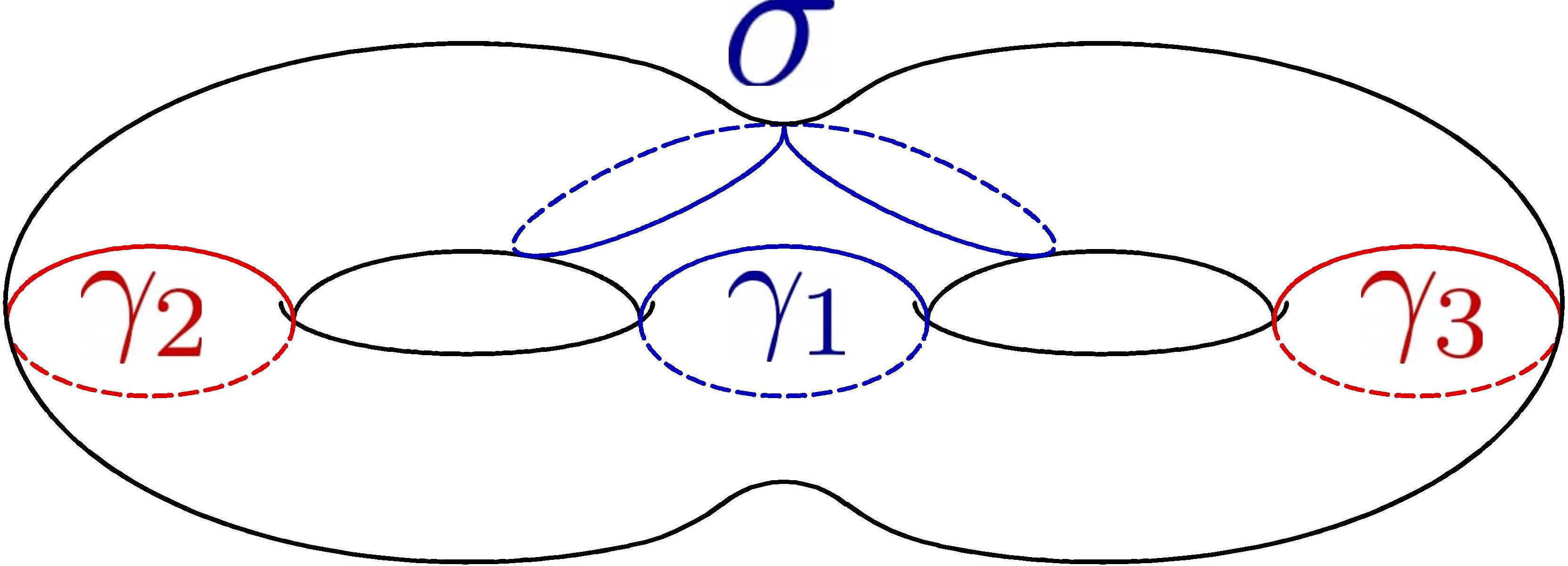}
\label{fig7}
\end{figure}

Let $\sigma_{L}$ be the figure eight depicted in figure \ref{fig7}. Consider the sweepout of $\mathrm{Y}_{L}$ described on subsection \ref{sec:pants} which starts on $[[\gamma_1]]$, ends on $[[\gamma_2\cup\gamma_3]]$ and has maximum equal to $\length(\sigma_{L})$. Joining the sweeouts on the two copies of $\mathrm{Y}_{L}$ and taking the sum of it with $[[\gamma_1]]$ we otain a sweeout of $S_{L}$ which starts and ends on the zero cycle, whose maximum is 
$\length(\sigma_{L}) + L.$ Therefore
\begin{equation}\label{eq:bound.width.nc}
\omega_1(S_{L}) \leq \length(\sigma_{L}) + L.
\end{equation}

We can argue as in the proof of Theorem \ref{thm:width.sharp} to conclude there is $L_0 > 0$ such that for $L \in (0,L_0]$, the curve $\gamma_i$ is a systole, $i=1,2,3$.

Now, let $\sigma_{\infty}$ be a figure eight geodesic as in the statement of \ref{thm:width.charac}. Suppose the length of $\sigma_{\infty}$ is different from $\length(\sigma_{L})$. Then $\sigma_{\infty}$ is not contained in any of the copies of $\mathrm{Y}$. In particular, $\sigma_{\infty}$ intersects at least one of the curves $\gamma_i$. Arguing again as in the proof of Theorem \ref{thm:width.sharp} one can choose $L_0$ such that any curve which intersects one of the $\gamma_i$, has length greater than $\length(\sigma) + L.$ However, this contradicts the inequality \eqref{eq:bound.width.nc}. Hence, $\length(\sigma_{\infty}) = \length(\sigma_{L})$. 

We conclude $\sigma_{\infty}$ is a figure eight contained in one of the copies of $\mathrm{Y}_{L}$. Since such a curve does not separate, there are simple geodesics $\sigma_1,\ldots,\sigma_N$, $N \geq 1$, such that
$$\omega_1(S_{L}) = \length(\sigma_{L}) + \sum_{i=i}^{N}\length(\sigma_i) \geq \length(\sigma_{L}) + L.$$
Therefore
$$\omega_1(S) = \length(\sigma_{L}) + L.$$ 
\end{dem3}

\subsection{The width of the Bolza surface}\label{subsec:Bolza}

The \emph{Bolza surface}, denoted by $\mathrm{B}$, is the closed hyperbolic surface of genus $2$ obtained identifying opposite edges of a regular octagon in $\mathbb{H}^2$ whose angles at the vertices are all equal to $\pi/4$, see figure \ref{fig8}.\\

\begin{figure}[!htb]
\centering
\includegraphics[scale=0.08]{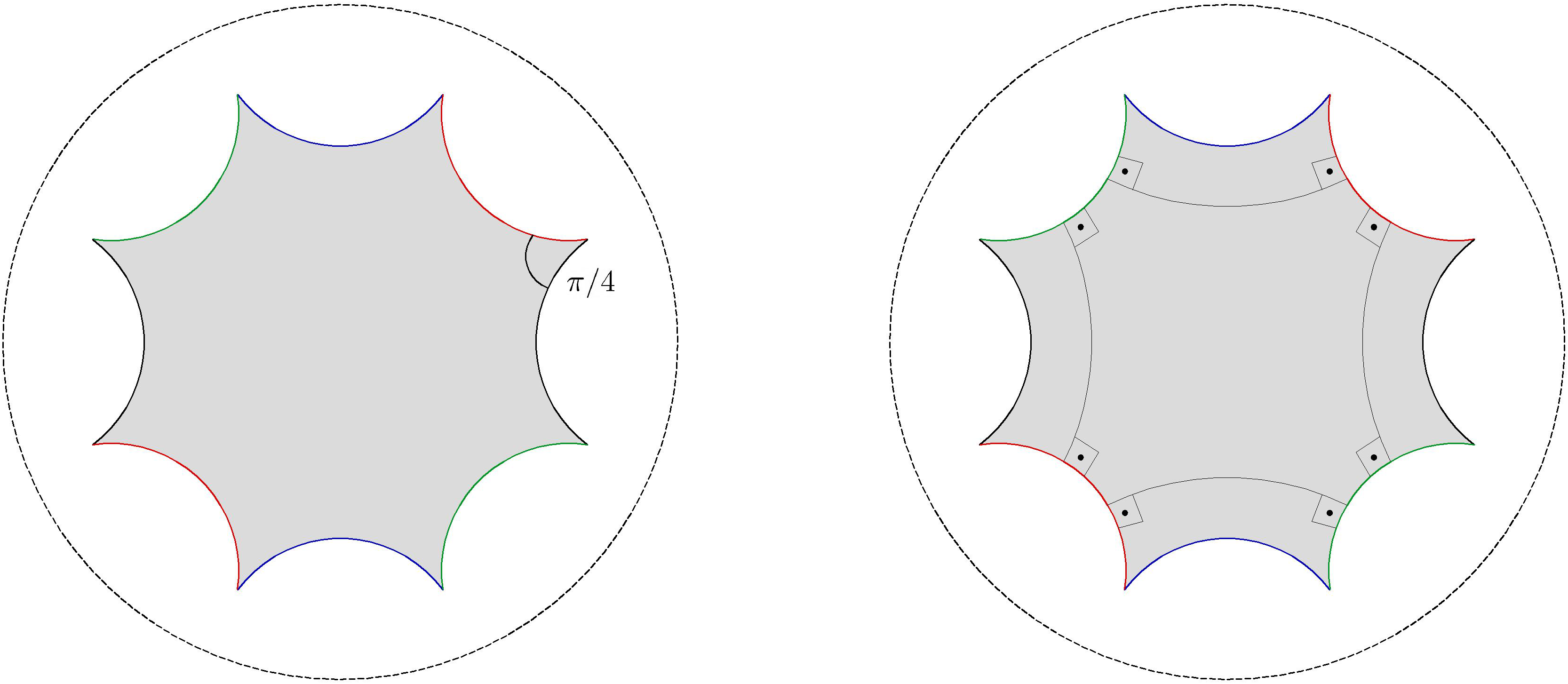}
\caption{The fundamental domain of the Bolza surface.}
\label{fig8}
\end{figure}

\begin{dem4}
Let us first obtain an upper bound for $\omega_{1}(\mathrm{B})$. We will need the following facts (see \cite[Proposition 2.6 and Theorem 5.2]{SS}): every systole of the Bolza surface is nonseparating and the value of the systole is given by
\begin{equation}\label{eq:1sys_B}
\mathrm{sys}(\mathrm{B}) = 2\,\mathrm{arcosh}(1 + \sqrt{2}) \approx 3.057.
\end{equation}
Moreover, the next value in the \emph{lenght spectrum} of $\mathrm{B}$ is
\begin{equation}\label{eq:2sys_B}
2\,\mathrm{arcosh}(3 + 2\sqrt{2}) \approx 4.897.
\end{equation}

Consider a geodesic arc $\widetilde\gamma$ in the fundamental domain of $\mathrm{B}$ which minimizes the distance between two sides separated by an edge, so that $\widetilde\gamma$ intersects the boundary of the domain perpendicularly. The union of four copies of $\widetilde\gamma$ as in figure \ref{fig8} projects to a separating closed geodesic $\gamma \subset \mathrm{B}$. 

We can compute $L_{\gamma}:=\length(\gamma)$ as follows. The arc $\widetilde\gamma$ cuts a quadrilateral from the octagon. An arc perpendicular to $\widetilde\gamma$ and passing through its midpoint divides the quarilateral into two \emph{trirectangles}. Then, hyperbolic trigonometry aplied to such a triretangle give us (see \cite[Theorem 2.3.1]{Bus})
$$L_{\gamma}:= 8\,\mathrm{arccosh}\bigg(1 + \frac{\sqrt{2}}{2}\bigg) \approx 9.027.$$

Let $\alpha$ be a separating simple closed geodesic in $\mathrm{B}$ of least length among such curves. 
Observe $\alpha$ divides $\mathrm{B}$ into two hyperbolic surfaces with geodesic boundary and signature $(1,1)$. Each such surface contains a 
interior closed geodesic whose length is less than or equal to (see \cite[Proposition 4.3]{Par})
$$2\,\mathrm{arccosh}\left(\cosh(L_{\alpha}/6) + \frac{1}{2}\right) \leq 2\,\mathrm{arccosh}\left(\cosh(L_{\gamma}/6) + \frac{1}{2}\right) \approx 3.425.$$
By \eqref{eq:1sys_B} and \eqref{eq:2sys_B} it follows such a curve is a systole of $\mathrm{B}$. So $\alpha$ and two systoles determine a pants decomposition of $\mathrm{B}$, where in each pair of pants there is a figure eigth geodesic of length $L$ satisfying
\begin{equation}\label{eq:fig.8.optimal}
L \leq 2\,\mathrm{arccosh}\big(\cosh(L_{\gamma}/2) + 2\cosh^2(\mathrm{sys}(\mathrm{B})/2)\big) \approx 9.482.
\end{equation}
As in the proof of Theorem \ref{prop:bound_width} one can then construct a sweepout whose maximum is $L$. Hence $\omega_1(\mathrm{B}) \leq L$.

Now, let $\sigma_{\infty}$ be a figure eight geodesic and $\{\sigma_i\}_{i=1}^{N}$ a collection of simple closed geodesics, as in the statement of Theorem \ref{thm:width.charac}. We will first show the collection $\{\sigma_i\}_{i=1}^{N}$ is empty.

The Bolza surface admits a decomposition into two isometric pair of pants, where each boundary geodesic is a systole \cite[page 21]{SU}.
Let $\beta$ be a figure eight geodesic contained in such a pair of pants. By \eqref{eq:length.fig.8}, its length $L_{\beta}:= \length(\beta)$, satisfies
$$\cosh(L_{\beta}/2) = \cosh(\mathrm{sys}(\mathrm{B})/2) + 2\cosh^2(\mathrm{sys}(\mathrm{B})/2)\big) = 7 + 5\sqrt{2}.$$
Thus,
\begin{equation}\label{eq:width_ub}
\mathrm{sys}(\mathrm{B}) + L_{\beta} = 2\,\mathrm{arcosh}(1 + \sqrt{2}) + 2\,\mathrm{arcosh}(7 + 5\sqrt{2}) \approx 9.729.
\end{equation}
%Observe every copy of $\beta$ is nonseparating.

Suppose the collection $\{\sigma_i\}_{i=1}^{N}$ is not empty. Then,
$$\mathrm{sys}(\mathrm{B}) + L_{\beta} \leq \omega_{1}(\mathrm{B}) \leq L,$$
which contradicts \eqref{eq:fig.8.optimal} and \eqref{eq:width_ub}. So the collection is empty. 

It follows $\sigma_{\infty}$ separates $\mathrm{B}$, so it is contained in a pair of pants where at least one the boundary components separates $\mathrm{B}$. Thus, $\length(\sigma_{\infty}) \geq L$. Therefore $\omega_{1}(\mathrm{B}) = L$.
\end{dem4}


\begin{thebibliography}{20}

\bibitem{AMN} I. Agol, F.C. Marques and A. Neves, {\it Min-max theory and the energy of links}. J. Amer. Math. Soc. 29 (2016), no. 2, 561-578.

%\bibitem{AA} W. K. Allard and F. J. Almgren, Jr., {\it The structure of stationary one dimensional varifolds with positive density}, Invent. Math., 34(2): 83-97, 1976.

%\bibitem{AMN} L. Ambrozio, F. C. Marques and A. Neves, \textit{Rigidity theorems for the area widths of Riemannian manifolds}, arxiv preprint, arXiv:2408.14375 [math.DG].

\bibitem{Al} F. J. Almgren Jr., {\it The homotopy groups of the integral cycle groups}, Topology, 1 (1962), 257-299.

\bibitem{Al2} F. J. Almgren, {\it The Theory of Varifolds}, Mimeographed Notes, Princeton, 1965.

\bibitem{Aie} N. S. Aiex, {\it The width of ellipsoids}, Comm. Anal. Geom., 27(2): 251-285, 2019.

%\bibitem{BP} V. Buchstaber, T. Panov, in {\it Torus Actions and Their Applications in Topology and Combinatorics}. University Lecture Series, vol. 24, American Mathematical Society, Providence, 2002, viii+144 pp

\bibitem{Bus0} P. Buser, {\it Riemannsche Flächen mit Eigenwerten in $(0,1/4)$}, Comment. Math. Helv. 52 (1977), no. 1, 25–34.

\bibitem{Bus} P. Buser, \textit{Geometry and spectra of compact Riemann surfaces}. Springer Science and Business Media, 2010.

\bibitem{CM1} O. Chodosh and C. Mantoulidis, \textit{Minimal surfaces and the Allen-Cahn equation on 3-manifolds: index, multiplicity, and curvature estimates}, Ann. of Math. (2) 191 (2020), no. 1, 213-328.

\bibitem{CM2} O. Chodosh and C. Mantoulidis, \textit{The $p$-widths of a surface}, Publ. Math. Inst. Hautes Études Sci. 137 (2023), 245-342.

\bibitem{CL} A. Chun-Pong Chu, Y. Li, {\it A strong multiplicity one theorem in min-max theory}, arxiv preprint, arXiv:2309.07741 [math.DG]

\bibitem{Dey} A. Dey, \textit{A comparison of the Almgren-Pitts and the Allen-Cahn min-max theory}, Geom. Funct. Anal. 32 (2022), no. 5, 980-1040.

\bibitem{Ga} P. Gaspar, \textit{The second inner variation of energy and the Morse index of limit interfaces}, J. Geom. Anal. 30 (2020), no. 1, 69-85.

\bibitem{GG} P. Gaspar and M. Guaraco, \textit{The Allen-Cahn equation on closed manifolds}, Calc. Var. Partial Differential Equations 57 (2018), no. 4, Paper No. 101, 42 pp.

\bibitem{Gr} M. A. Grayson, {\it Shortening embedded curves}, Annals of Mathematics, v. 129, n. 1, p. 71-111, 1989.

\bibitem{G} M. Guaraco, \textit{Min-max for phase transitions and the existence of embedded minimal hypersurfaces}, J. Differential Geom. 108 (2018), no. 1, 91-133.

\bibitem{GMN} M. Guaraco, F. C. Marques and A. Neves, \textit{Multiplicity one and strictly stable Allen-Cahn minimal hypersurfaces}, arxiv preprint, arXiv:1912.08997v2 [math.DG]

\bibitem{Gro} M. Gromov, {\it Dimension, nonlinear spectra and width}, in Geometric aspects of functional analysis (1986/87), volume 1317 of {\it Lecture Notes in Math.}, pages 132-184. Springer, Berlin, 1988.

\bibitem{Gut} L. Guth, {\it Minimax problems related to cup powers and Steenrod squares}, Geom. Funct. Anal., 18(6): 1917-1987, 2009.

\bibitem{H} F. Hiesmayr, \textit{Spectrum and index of two-sided Allen–Cahn minimal hypersurfaces}, Comm. Partial Differential Equations 43 (2018), no. 11, 1541-1565.

\bibitem{HT} J. E. Hutchinson and Y. Tonegawa, {\it Convergence of phase interfaces in the van der Waals-Cahn-Hilliard
theory}, Calc. Var. Partial Differ. Equ., 10 (2000), 49-84.

\bibitem{IMN} K. Irie, F.C. Marques and A. Neves, {\it Density of minimal hypersurfaces for generic metrics}. Ann. of Math. (2) 187 (2018), no. 3, 963-972.

%\bibitem{Le} N. Q. Le, \textit{On the second inner variation of the Allen-Cahn functional and its applications}, Indiana Univ. Math. J. 60 (2011), no. 6, 1843-1856.

\bibitem{LMN} Y. Liokumovich, Fernando Marques, and Andre Neves, {\it Weyl law for the volume spectrum}, Ann. of Math. (2), 187(3):933-961, 2018.

\bibitem{LPW} Y. Liu, F. Pacard, J. Wei, {\it Bouncing Jacobi fields and the Allen-Cahn equation on surfaces}, arxiv preprint, arXiv:2308.06414 [math.AP]

\bibitem{Man} C. Mantoulidis, \textit{Allen–Cahn min-max on surfaces}, J. Differential Geom. 117 (2021), no. 1, 93-135.

%\bibitem{MMN} F.C. Marques, R. Montezuma and A. Neves, {\it Morse inequalities for the area functional}, Journal of Differential Geometry, v. 124, n. 1, p. 81-111, 2023.

\bibitem{MaNe} F.C. Marques and A. Neves, {\it  Min-max theory and the Willmore conjecture}, Ann. of Math., 179(2)(2014) 683-782.

\bibitem{MaNe1} F. C. Marques and A. Neves, \textit{Morse index and multiplicity of min-max minimal hypersurfaces}. Camb. J. Math. 4 (2016), no. 4, 463-511.

\bibitem{MaNe2} F. C. Marques and A. Neves, \textit{Existence of infinitely many minimal hypersurfaces in positive Ricci curvature}, Invent. Math. 209 (2017), no. 2, 577-616.

\bibitem{MaNe3} F. C. Marques and A. Neves, \textit{Morse index of multiplicity one min-max minimal hypersurfaces}, Adv. Math. 378 (2021), Paper No. 107527, 58 pp.

\bibitem{MNS} F.C. Marques, A. Neves and A. Song, {\it Equidistribution of minimal hypersurfaces for generic metrics}. Invent. Math. 216 (2019), no. 2, 421-443.

\bibitem{MKSS} J. Marx-Kuo, L. Sarnataro, D. Stryker, {\it Index, Intersections, and Multiplicity of Min-Max Geodesics}, arxiv preprint, arXiv:2410.02580 [math.DG]

\bibitem{Mod} L. Modica, {\it The gradient theory of phase transitions and the minimal interface criterion}, Arch. Rational Mech. Anal. 98 (1987), no. 2, 123-142.

\bibitem{Li} Y. Li, \textit{Existence of infinitely many minimal hypersurfaces in higher-dimensional closed manifolds with generic metrics}, J. Differential Geom. 124 (2023), no. 2, 381-395.

\bibitem{OR} J. Otal, E. Rosas, {\it Pour toute surface hyperbolique de genre $g$, $\lambda_{2g-2} > 1/4$}, Duke Math. J. 150 (2009), no. 1, 101-115.

\bibitem{PT} P. Padilla and Y. Tonegawa, {\it On the convergence of stable phase transitions}, Communications on pure and applied mathematics, 51 (1998), pp. 551-579.

\bibitem{Par} H. Parlier, {\it Simple closed geodesics and the study of Teichmüller spaces}, Handbook of Teichmüller theory, v. 4, p. 113-134, 2014.

\bibitem{P} J. T. Pitts, {\it Existence and Regularity of Minimal Surfaces on Riemannian Manifolds}, Math. Notes 27, Princeton Univ. Press, Princeton, N.J., 1981. 

\bibitem{P2} J. T. Pitts, {\it Regularity and singularity of one dimensional stationary integral varifolds on manifolds arising from variational methods in the large}, Symposia Mathematica, Vol. XIV (Convegno di Teoria Geometrica dell’Integrazione e Variet`a Minimali, INDAM, Roma, Maggio 1973), pp. 465-472.

\bibitem{ScSim} R. Schoen, L. Simon, {\it Regularity of stable minimal hypersurfaces}, Commun. Pure Appl. Math. 34, 741-797, 1981.

\bibitem{Sim} L. Simon, {\it Lectures on Geometric Measure Theory, Proc. Centre Math.
Anal., Austral. Nat. Univ.} 3, Australian National University, Centre for
Mathematical Analysis, Canberra, 1983.

\bibitem{SS} P. Schmutz, {\it Riemann surfaces with shortest geodesic of maximal length}, Geom. Funct. Anal. 3, 564-631 (1993).

\bibitem{St} P. Sternberg, {\it The effect of a singular perturbation on nonconvex variational problems}, Archive for Rational Mechanics and Analysis, 101 (1988), pp. 209-260.

\bibitem{Song} A. Song, {\it Existence of infinitely many minimal hypersurfaces in closed manifolds}, Ann. of Math. (2) 197 (2023), no. 3, 859-895.

\bibitem{SU} A. Strohmaier and V. Uski, {\it An algorithm for the computation of eigenvalues, spectral zeta functions and zeta-determinants on hyperbolic surfaces}, Communications in Mathematical Physics, v. 317, p. 827-869, 2013. 

\bibitem{T} Y. Tonegawa, {\it On stable critical points for a singular perturbation problem}, Communications in Analysis and Geometry 13.2 (2005): 439-459.

\bibitem{TW} Y. Tonegawa and N. Wickramasekera, {\it Stable phase interfaces in the van der Waals-Cahn-Hilliard theory}, Journal f\"ur die reine und angewandte Mathematik (Crelles Journal), 2012, pp. 191-210.

\bibitem{WW1} K. Wang and J. Wei, {\it Finite Morse index implies finite ends}, Comm. Pure Appl. Math. 72 (2019), no.5, 1044-1119.

\bibitem{WW2}
K. Wang and J. Wei, {\it Second order estimates on transition layers}, Advances in Mathematics 358 (2019), p. 106856.

\bibitem{Wh} B. White, {\it Curvature estimates and compactness theorems in 3-manifolds for surfaces that are stationary for parametric elliptic functionals}, Invent. Math. 88 no. 2 (1987), 243-256.

\bibitem{Zho} X. Zhou, {\it On the multiplicity one conjecture in min-max theory}, Ann. of Math. (2), 192(3): 767-820, 2020.


\end{thebibliography}
\end{document}